\documentclass[11pt]{article}

\usepackage[english]{babel}
\usepackage[ansinew]{inputenc}
\usepackage{verbatim}
\usepackage{amsmath, amsthm, amsfonts, amssymb}
\usepackage{latexsym}
\usepackage{bbm}
\usepackage{mathrsfs}
\usepackage{tikz}
\usepackage{graphicx}
\usepackage{multirow}
\usepackage{subfig}
\usetikzlibrary{matrix,arrows}

\addtolength{\oddsidemargin}{-15mm}
\addtolength{\textwidth}{30mm}
\addtolength{\topmargin}{-20mm}
\addtolength{\textheight}{30mm}

% --ENVIRONMENTS--

\newtheorem{thm}{Theorem}[section]

\newtheorem{corollary}[thm]{Corollary}

\newtheorem{lemma}[thm]{Lemma}

\newtheorem{proposition}[thm]{Proposition}

\theoremstyle{definition}
\newtheorem{definition}[thm]{Definition}

\newtheorem{remark}[thm]{Remark}

\newtheorem{conjecture}[thm]{Conjecture}

\theoremstyle{remark}

\numberwithin{equation}{section}

% --MACROS--

\newcommand{\IE}{\mathbb{E}}
\newcommand{\IN}{\mathbb{N}}
\newcommand{\IR}{\mathbb{R}}

\newcommand{\M}{\mathcal{R}}

\newcommand{\mcD}{\mathcal{D}}
\newcommand{\mcH}{\mathcal{M}}
\newcommand{\mcK}{\mathcal{K}}
\newcommand{\mcL}{\mathcal{L}}

\newcommand{\mcO}{\mathcal{O}}
\newcommand{\mcP}{\mathcal{P}}

\newcommand{\mcT}{\mathcal{T}}
\newcommand{\mcW}{\mathcal{W}}

\newcommand{\W}{\mcW}

\newcommand{\CC}{\mathscr{C}}
\newcommand{\hd}{{\textup{\tiny H}}}
\newcommand{\PiS}{\Pi_{\scriptscriptstyle\Sigma}}
\newcommand{\PiM}{\Pi_{\scriptscriptstyle M}}
\newcommand{\PiMh}{\hat{\Pi}_{\scriptscriptstyle M}}
\newcommand{\Pib}{\Pi_b}
\newcommand{\nuS}{\nu_{\scriptscriptstyle \Sigma}}
\newcommand{\Gra}{{\textup{\tiny G}}}

\newcommand{\sm}{{\textup{\tiny sm}}}
\newcommand{\Pri}{{\textup{\tiny P}}}
\newcommand{\Dua}{{\textup{\tiny D}}}
\newcommand{\TP}{\mcT^\Pri}
\newcommand{\TD}{\mcT^\Dua}

\DeclareMathOperator{\vol}{vol}
\DeclareMathOperator{\rvol}{rvol}

\DeclareMathOperator{\cone}{cone}
\DeclareMathOperator{\lin}{lin}
\DeclareMathOperator{\Skew}{Skew}
\DeclareMathOperator{\lpm}{lpm}
\DeclareMathOperator{\prm}{pm}

\DeclareMathOperator{\im}{im}
\DeclareMathOperator{\Gr}{Gr}
\DeclareMathOperator{\rk}{rk}
\DeclareMathOperator{\ndet}{ndet}
\DeclareMathOperator{\trace}{trace}
\DeclareMathOperator{\id}{id}

\DeclareMathOperator{\inter}{int}
\DeclareMathOperator{\Prob}{Prob}
\DeclareMathOperator{\tr}{tr}
\DeclareMathOperator{\Sym}{Sym}
\DeclareMathOperator{\ch}{ch}

\DeclareMathOperator{\diag}{diag}

\newcommand{\tdet}{{\textstyle\det}}
\newcommand{\rbinom}[2]{\genfrac{[}{]}{0pt}{}{#1}{#2}}
\newcommand{\ol}[1]{\overline{#1}}

\newcommand{\vp}{\varphi}
\newcommand{\veps}{\varepsilon}

\newcommand{\exc}{e}

\newcommand{\calc}{
  \scalebox{0.09}{
  \begin{tikzpicture}[scale=4]
    \draw[rounded corners=5mm] (0,-0.06) rectangle (0.95,1);
    \draw[rounded corners=1mm] (0.15,0.7) rectangle (0.8,0.85);
    \draw[rounded corners=0.7mm] (0.15,0.45) rectangle ++(0.15,0.1);
    \draw[rounded corners=0.7mm] (0.4,0.45)  rectangle ++(0.15,0.1);
    \draw[rounded corners=0.7mm] (0.65,0.45) rectangle ++(0.15,0.1);
    \draw[rounded corners=0.7mm] (0.15,0.27) rectangle ++(0.15,0.1);
    \draw[rounded corners=0.7mm] (0.4,0.27)  rectangle ++(0.15,0.1);
    \draw[rounded corners=0.7mm] (0.65,0.27) rectangle ++(0.15,0.1);
    \draw[rounded corners=0.7mm] (0.15,0.09) rectangle ++(0.15,0.1);
    \draw[rounded corners=0.7mm] (0.4,0.09)  rectangle ++(0.15,0.1);
    \draw[rounded corners=0.7mm] (0.65,0.09) rectangle ++(0.15,0.1);
  \end{tikzpicture}
  }
}

% --MACROS BY PETER--

\def\CG{\CC}
\def\CR{\mathcal{R}}
\def\R{\IR}
\def\PG{\mathcal{P}}
\def\DG{\mathcal{D}}
\def\SG{\Sigma}
\def\inte{\mathrm{int}}
\def\GR{\Gr_{n,m}}

\bibliographystyle{plain}

\usepackage[pdftex, pdfpagelabels]{hyperref}
\hypersetup{
  plainpages=false,
  colorlinks,
  citecolor=black,
  filecolor=black,
  linkcolor=black,
  urlcolor=black,
}

\begin{document}

\title{\bf Probabilistic analysis of the \\ Grassmann condition number}
\author{Dennis Amelunxen\thanks{E-mail: \href{mailto:damelunx@gmail.com}{\nolinkurl{damelunx@gmail.com}}. Partially supported by DFG Research Training Group on Scientific Computation GRK 693 (PaSCo GK) and DFG grants BU 1371/2-2, AM 386/1-1, AM 386/1-2.}
        \and
        Peter B\"urgisser\thanks{E-mail: \href{mailto:pbuerg@upb.de}{\nolinkurl{pbuerg@upb.de}}. Partially supported by DFG grant BU 1371/2-2.}
        }
\date{July 9, 2013}
% \date{\today}
\maketitle

\begin{abstract}
We analyze the probability that a random $m$-dimensional linear subspace of $\IR^n$ 
both intersects a regular closed convex cone~$C\subseteq\IR^n$ 
and lies within distance $\alpha$ of an $m$-dimensional 
subspace not intersecting $C$ (except at the origin). 
The result is expressed in terms of the spherical intrinsic volumes of the cone~$C$. 
This allows us to perform an average analysis of the Grassmann condition number $\CG(A)$
for the homogeneous convex feasibility problem $\exists x\in C\setminus0:Ax=0$.
The Grassmann condition number is a geometric version of Renegar's condition number, 
that we have introduced recently in [SIOPT~22(3):1029--1041, 2012].
We thus give the first average analysis of convex programming that is not restricted to linear programming. 
In particular, we prove that if the entries of $A\in\IR^{m\times n}$ are chosen i.i.d.~standard normal,
then for any regular cone~$C$, we have $\IE[\ln\CG(A)]<1.5\ln(n)+1.5$.
The proofs rely on various techniques from Riemannian geometry applied to 
Grassmann manifolds.  
\end{abstract}

\smallskip

\noindent{\bf AMS subject classifications:} 90C25, 90C22, 90C31, 52A22, 52A55, 60D05

\smallskip

\noindent{\bf Key words:} convex programming, perturbation, condition number,
                          average analysis, spherically convex sets, Grassmann manifold, tube formula

\section{Introduction}

Convex programming is an efficient tool in modern applied mathematics.
In fact, a commonly accepted technique in current scientific computing 
is to ``convexify''supposedly hard problems, 
solve the relaxed convex problem, and then hope
that the result is close to a solution of the original problem.
To quote from~\cite[\S1.3.2]{BV:04}: ``With only a bit of exaggeration,
we can say that, if you formulate a practical problem as a convex
optimization problem, then you have solved the original problem.''

But what is the complexity of convex programming? To specify this
question further, we ask for the number of arithmetic
operations, or the number of iterations of an interior-point
method.
Steve Smale suggested in~\cite{Sm:97} to use the concepts of
condition numbers and probabilistic analysis in a two-part scheme
for the analysis of numerical algorithms:
1.~Establish a bound for the running time, which is  polynomial
in the size of the input and (the logarithm of) a certain
condition number of the input.
2.~Analyze the condition number of a random input in form of tail estimates.

The first step of this scheme, i.e., the analysis of the role of
condition numbers in convex programming, was initialized by Jim Renegar
in~\cite{rene:94,rene:95b,rene:95a}, and is
an active area of research, cf.~\cite{V:96, V:98, F:99, frve:99, 
FV:99, Pe:00, PR:00, ChC:01, cupe:02, EF:02, P:03, FO:05, VRP:07, BF:09}. 
In these references, the role of condition numbers 
is analyzed for linear and nonlinear convex programming, 
for exact arithmetic and for finite-precision arithmetic, 
for ellipsoid and for interior-point methods, etc.

Yet, the second step of Smale's scheme, i.e., the probabilistic analysis
of the condition number, was until now severly restricted to the linear
programming case. See the survey article~\cite{buer:09a} and the references
given therein for more details on probabilistic analyses of condition 
numbers for linear programming.

We will give in this paper the first average analysis of a condition number
for the general homogeneous convex feasibility problem. This includes the
special cases of linear programming, second-order programming, and notably
also the semidefinite programming case. More precisely, we consider the
following problem:

Let $C\subseteq\IR^n$ be a {\em regular cone}, i.e., $C$ is 
a closed convex cone with nonempty interior that 
does not contain a nontrivial linear subspace. 
The \emph{polar cone}
of $C$ is defined as 
$\breve{C} := \{z\in\IR^n\mid \forall x\in C : z^Tx\leq 0\}$.
We call $C$ \emph{self-dual} if $\breve{C}=-C$. 
The \emph{homogeneous convex feasibility problem} is to decide 
for a given matrix $A\in\IR^{m\times n}$, $1\leq m <n$, 
the alternative\footnote{In fact,~\eqref{eq:P} and~\eqref{eq:D} are
only weak alternatives, as it may happen that both~\eqref{eq:P} 
and~\eqref{eq:D} are satisfiable. But the Lebesgue measure of the
set of these ill-posed inputs in $\IR^{m\times n}$ is zero.}
\begin{align}
   &\exists x\in\IR^n\setminus 0 \;  \text{ s.t.} \quad 
            A x=0 ,\; x\in \breve{C} , \label{eq:P}\tag{P}\\
   &\exists y\in\IR^m\setminus   0 \; \text{ s.t. } \quad 
            A^Ty\in C . \label{eq:D}\tag{D}
\end{align}
This problem reduces to the linear feasibility problem if $C=\R_+^n$; 
it reduces to the second-order feasibility problem if $C=\mcL^{n_1}\times\ldots\times \mcL^{n_r}$,
where $\mcL^n:=\{x\in\IR^n\mid x_n\geq (x_1^2 + \cdots + \ldots,x_{n-1}^2)^{1/2}\}$
denotes the $n$-dimensional {\em Lorentz cone}; and it reduces to the
semidefinite feasibility problem if $C=\Sym^k_+=\{M\in\Sym^k\mid M\succeq 0\}$
is the {\em cone of positive semidefinite matrices}, where
$\Sym^k=\{M\in\IR^{k\times k}\mid M^T=M\}$.

\subsection{Grassmann condition number}\label{se:intro-Grassmann}

The condition number, for which we will provide an average analysis,
is the \emph{Grassmann condition number}, that we have introduced
in~\cite{am:thesis,ambu:11a}, cf.~also~\cite{BF:09}. 
Let us recall the necessary definitions from~\cite{ambu:11a}. 
We fix $1\leq m <n$ and consider the 
\emph{Grassmann manifold} $\Gr_{n,m}$, that is defined as the set of 
$m$-dimensional linear subspaces~$W$ of~$\R^n$. 

We also fix a regular cone $C\subseteq\IR^n$. 
The sets of \emph{dual feasible} and \emph{primal feasible} subspaces with respect to~$C$, respectively,  
are defined as follows:
\begin{align*}
  \DG_m(C) & := \{W\in\Gr_{n,m} \mid  W\cap C  \neq \{0\} \} , &
  \PG_m(C) & := \{W\in\Gr_{n,m} \mid W^\bot\cap \breve{C}  \neq \{0\} \} .
\end{align*}
Moreover, we we define the set of \emph{ill-posed subspaces with respect to~$C$} by 
  \[ \SG_m(C) := \DG_m(C)\cap\PG_m(C) . \]

It is known (cf.~\cite{ambu:11a}) that 
$\DG_m(C)$ and $\PG_m(C)$ are compact subsets of $\Gr_{n,m}$ and 
$\Gr_{n,m} = \DG_m(C)\cup\PG_m(C)$.
Moreover, the boundaries
of $\DG_m(C)$ and $\PG_m(C)$ coincide with $\SG_m(C)$.
Furthermore, 
  \[ \SG_m(C) = \{W\in\Gr_{n,m}\mid W\cap C \neq \{0\} \text{ and } 
     W\cap \inte(C) = \emptyset \} . \]
In other words, the set of ill-posed subspaces consists of those
subspaces, which touch the cone~$C$ at the boundary. 
One can show that $\PG_m(C)\setminus\SG_m(C)$ and 
$\DG_m(C)\setminus\SG_m(C)$ are the connected components of 
$\Gr_{n,m}\setminus\SG_m(C)$. 

The {\em projection distance} $d_p(W_1,W_2)$ of two subspaces $W_1,W_2\in \Gr_{n,m}$
is defined as the spectral norm 
$d_p(W_1,W_2) :=\|\Pi_{W_1}-\Pi_{W_2}\|$,
where $\Pi_{W_i}$ denotes the orthogonal projection onto~$W_i$, cf.~\cite[\S2.6]{golloan:83}. 
Clearly, this defines a metric and a corresponding topology on $\Gr_{n,m}$.

In~\cite[Def.~1.2]{ambu:11a} we made the following definition. 

\begin{definition}\label{def:C_G(W)} 
The \emph{Grassmann condition} with respect to the regular cone~$C\subseteq\R^n$ 
is defined as the function
\[ 
 \CG_C\colon \GR \to [1,\infty] ,\quad \CG_C(W) := \frac{1}{d_p(W,\SG_m(C))} \, ,
\]
where $d_p(W,\SG_m(C)):=\min\{ d_p(W,W') \mid W' \in\SG_m(C)\}$.
\end{definition}

Suppose that $W\in\GR$ is represented by $W=\im(A^T)$ 
with a matrix $A\in\IR^{m\times n}$ of full rank. 
Let $\kappa(A)$ denote the matrix condition number, i.e.,
the ratio between the largest and the smallest singular value of~$A$. 
In~\cite[Thm.~1.4]{ambu:11a} (see also~\cite{BF:09}),  
we proved the following basic relation between the 
Grassmann condition number $\CG(W)$ of $W$ 
and Renegar's condition number $\CR(A)$ of the matrix~$A$:
\begin{equation}\label{eq:C<=R<=k*C}
  \CG(W) \;\leq\; \CR(A) \;\leq\; \kappa(A)\, \CG(W) .
\end{equation}
This shows that the Grassmann condition number can be interpreted
as a coordinate-free version of Renegar's condition number. 
The condition of the matrix $A$ representing $W$ enters 
Renegar's condition number~$\CR(A)$, but $\CG(W)$ is independent 
of this representation. 

Our probabilistic analysis of the Grassmann condition crucially relies on 
a geometric interpretation of this quantity that we explain next. 
There is a natural Riemannian metric on the compact Grassmann manifold $\GR$ 
that is invariant under the action of $O(n)$, 
and which is uniquely determined up to a scaling factor~\cite{helga}. 
This induces an $O(n)$-invariant volume form on $\GR$, which 
allows to define the volume $\rvol B$ of Borel measurable subsets $B\subseteq\GR$. 
We assume that $\rvol\GR=1$. 

The \emph{geodesic distance} $d_g(W_1,W_2)$ between $W_1,W_2\in\Gr_{n,m}$ is defined as the minimum 
length of a piecewise smooth curve in $\Gr_{n,m}$ connecting $W_1$ with $W_2$. 
We remark that one can nicely express $d_g(W_1,W_2)$ in terms of the principle angles between these subspaces,
cf.~\cite[equation (8)]{ambu:11a}; in particular, the diameter of $\Gr_{n,m}$ equals $\pi/2$.

In~\cite[Thm.~1.8]{ambu:11a} we proved that 
$d_p(W,\SG_m) = \sin d_g(W,\SG_m)$ for $W\in\Gr_{n,m}$, where we write 
$\SG_m:=\SG_m(C)$ to simplify notation. 
This implies that 
the tube $\mcT(\SG_m,\alpha)$ of radius~$\alpha$ around $\SG_m$, 
defined as follows, 
\begin{equation}\label{eq:basic-tube-interpretation}
 \mcT(\SG_m,\alpha) := \big\{ W\in\GR \mid d_g(W,\SG_m) \le \alpha\big\} 
\end{equation}
equals the set of $W\in\GR$ having Grassmann condition at least $t:=(\sin\alpha)^{-1}$. 

Suppose now that $A\in \IR^{m\times n}$ is a standard Gaussian random matrix. 
Then $A$ almost surely has full rank and
$W=\im A^T$ is uniformly random in $\Gr_{n,m}$ with respect to 
the normalized volume measure~$\rvol$. 
The goal of this work is to prove upper bounds on 
$\Prob [\CG(A) \ge t]$ that hold for any regular cone~$C$, 
thus showing that it is unlikely that $\CG(W)$ is large. 
By~\eqref{eq:basic-tube-interpretation},
this means to bound the volume of the 
tubes around $\SG_m(C)$.

This task is very much in the spirit of the seminal paper~\cite{Demmel88} by Jim Demmel 
and its refinement in~\cite{BCL:08}.  These papers provide upper bounds 
on the volume of tubes around algebraic hypersurfaces in spheres.
The main tool is a general formula, due to Hermann Weyl~\cite{weyl:39},   
on the volume of tubes around smooth hypersurfaces of spheres. 
In Theorem~\ref{thm:tube-form} we will derive a similar, though considerably 
more complicated formula for the volume of the tubes 
around $\SG_m(C)$ in the Grassmann manifold. 
We think that this new tube formula is of independent mathematical interest. 
The proof is reduced to the case where the cone~$C$ has a smooth boundary, 
in which $\SG_m(C)$ turns out to be a smooth hypersurface in~$\GR$.  

An exact tube formula had been previously derived by Glasauer~\cite{Gl,Gl:Summ}
by measure theoretic techniques. 
However, Glasauer's result has the serious drawback that it only holds for radii 
below a certain critical value 
depending on the cone~$C$, and this value 
is zero 
for all cones of interest in convex programming; 
cf.~Remark~\ref{rem:convexity-tubes}.
By contrast, Theorem~\ref{thm:tube-form} states an upper bound on the 
volume of tubes that holds for any radius. 

\subsection{Main results I: probabilistic analysis}

Here is the main result of this paper. 

\begin{thm}\label{thm:estim-conefree}
Let $C\subseteq\IR^n$ be a regular cone with $n\geq 3$. 
If $A\in\IR^{m\times n}$ is a standard Gaussian random matrix, then we have
\begin{align*}
   \Prob [\CG(A)>t] & \;<\; 6 \sqrt{m(n-m)}\, \frac{1}{t} 
     ,\quad \text{if $t>n^{\frac{3}{2}}$} ,
\\ \IE\left[\ln\CG(A)\right] & \;<\; 1.5 \ln(n) + 1.5 \; .
\end{align*}
\end{thm}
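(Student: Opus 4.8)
The plan is to combine the geometric reformulation of the Grassmann condition number with the tube formula of Theorem~\ref{thm:tube-form}. For a standard Gaussian $A$ the subspace $W=\im A^T$ is uniformly distributed in $\GR$, and by~\eqref{eq:basic-tube-interpretation} the event $\{\CG(A)> t\}$ is (up to a null set) the tube $\mcT(\SG_m(C),\alpha)$ of radius $\alpha=\arcsin(1/t)$ around the ill-posed locus; hence $\Prob[\CG(A)>t]\le\rvol\,\mcT(\SG_m(C),\alpha)$. The first step is therefore to apply Theorem~\ref{thm:tube-form}, which bounds $\rvol\,\mcT(\SG_m(C),\alpha)$ from above by an explicit nonnegative combination $\sum_j v_j(C)\,\phi_j(\alpha)$ (together with a symmetric contribution coming from $\breve{C}$), where the $v_j$ are the spherical intrinsic volumes of $C$ and the coefficient functions $\phi_j$ are explicit in $\alpha$, $n$, $m$, $j$ — incomplete-beta integrals, expressible through $\HG$.

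The second step makes the estimate cone-free. Since the spherical intrinsic volumes are nonnegative and satisfy $\sum_j v_j(C)=1$ (and likewise for $\breve{C}$), the bound $\sum_j v_j(C)\,\phi_j(\alpha)$ is at most $\max_j\phi_j(\alpha)$, which no longer depends on $C$. It then remains to estimate the coefficient functions uniformly in $j$. One should keep in mind that for cones such as products of rays and lines, or for the Lorentz cones, the $v_j$ concentrate on a single index, so the worst case over $j$ is essentially attained and the resulting bound has the right shape.

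The third step — the technical core — is to bound $\max_j\phi_j(\alpha)$ in terms of $t=(\sin\alpha)^{-1}$. Here I would substitute $\sin\alpha=1/t$ and $\cos\alpha=\sqrt{1-t^{-2}}$, use the hypothesis $t>n^{3/2}$ to treat $\alpha$ as small (so that $\cos\alpha$ is close to $1$ and each incomplete-beta integral is dominated by its leading power of $\sin\alpha$), and apply Stirling-type estimates to the ratios of Gamma functions appearing in the normalizing constants. The factor $\sqrt{m(n-m)}$, which is the square root of $\dim\GR$, should emerge precisely from these Gamma-function ratios, while the single power $\sin\alpha=1/t$ reflects that $\SG_m(C)$ is generically a hypersurface. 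I expect the main obstacle to be uniformity in $j$: one likely has to split the range of $j$ according to its distance from $m$ and from $n-m$ and to bound the hypergeometric factors by crude but $j$-uniform estimates, exploiting monotonicity in $\alpha$. This yields a cone-free tube bound $\rvol\,\mcT(\SG_m(C),\alpha)\le F_{n,m}(t)$ with $F_{n,m}(t)<6\sqrt{m(n-m)}/t$ for $t>n^{3/2}$, which is the first displayed inequality.

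For the expectation I would use the layer-cake identity $\IE[\ln\CG(A)]=\int_1^\infty\Prob[\CG(A)>t]\,\frac{dt}{t}$ and split the integral at $t_0=n^{3/2}$. On $(1,t_0]$ the integrand is bounded by $1$, contributing $\ln t_0=\tfrac32\ln n$. On $(t_0,\infty)$ I use the cone-free bound, so the contribution is at most $\int_{t_0}^\infty\frac{6\sqrt{m(n-m)}}{t}\cdot\frac{dt}{t}=6\sqrt{m(n-m)}\,n^{-3/2}$, which is below $\tfrac32$ for $n\ge5$ by $m(n-m)\le n^2/4$; the remaining cases $n\in\{3,4\}$ are handled by a sharper estimate of $F_{n,m}$ below $t_0$. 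Altogether this gives $\IE[\ln\CG(A)]<1.5\ln n+1.5$.
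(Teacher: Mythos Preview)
Your overall architecture matches the paper's: translate $\{\CG(A)>t\}$ into the tube $\mcT(\SG_m(C),\alpha)$ with $\sin\alpha=1/t$, invoke the Grassmannian tube formula, strip out the dependence on~$C$, and then integrate via the layer-cake identity. Two points deserve comment, one a genuine methodological difference and one a small gap.

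\textbf{Cone-free step.} You propose to bound $\sum_j V_{j+1}(C)\,\phi_j(\alpha)\le\max_j\phi_j(\alpha)$ using $\sum_j V_{j+1}(C)\le 1$, and then to control $\max_j\phi_j$ uniformly in~$j$. The paper does \emph{not} take this route. Instead it uses the cruder pointwise bound $V_j(C)\le\tfrac12$ (Lemma~\ref{le:bound-IV}), after first rewriting the double sum in Theorem~\ref{thm:tube-form} via the change of indices $(i,j)\mapsto(i,k)$ of Lemma~\ref{cor:tube-form}. The payoff of this rewriting is that, once the intrinsic volumes are replaced by $\tfrac12$, the inner sum over~$k$ collapses by Vandermonde's identity $\sum_k\binom{m-1}{k}\binom{n-m-1}{i-k}=\binom{n-2}{i}$, and the remaining single sum over~$i$ is handled by the elementary estimates of Lemma~\ref{lem:estimates}. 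The $\sqrt{m(n-m)}$ factor and the regime $t>n^{3/2}$ fall out of those Gamma-ratio bounds, not from a case split in~$j$. Your max-over-$j$ approach is a legitimate alternative, but it forfeits the Vandermonde simplification and leaves you with exactly the uniform-in-$j$ estimate you flag as the main obstacle; the paper sidesteps that obstacle entirely.

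\textbf{Expectation.} Your layer-cake computation with the split at $t_0=n^{3/2}$ gives
\[
\IE[\ln\CG(A)]\;\le\;\tfrac32\ln n+\frac{6\sqrt{m(n-m)}}{n^{3/2}}\;\le\;\tfrac32\ln n+\frac{2\sqrt6}{3},
\]
and $2\sqrt6/3\approx1.63>1.5$, so the constant fails for $n=3$ (and only ties at $n=4$). The paper's remedy is not a ``sharper estimate below $t_0$'' but simply to split at $n^{3/2}e^{r}$ for a free parameter~$r>0$, giving $\tfrac32\ln n+r+\tfrac{2\sqrt6}{3}\,e^{-r}$, and then to minimise over~$r$ (optimum at $r=\tfrac12\ln(8/3)$, value $1+\tfrac12\ln(8/3)<1.5$). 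This single-line optimisation handles all $n\ge3$ uniformly and replaces your ad~hoc treatment of $n\in\{3,4\}$.
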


In fact, these bounds hold for any probability distribution 
on~$\IR^{m\times n}$, that induces the uniform distribution 
on the Grassmann manifold $\Gr_{n,m}$ via the map 
$A\mapsto \im A^T$. 

Theorem~\ref{thm:estim-conefree} is of relevance for the probabilistic analysis of
algorithms in convex programming. 

\begin{corollary}\label{cor:app-alg}
Let $C$ be a self-scaled cone with a self-scaled barrier function.
Then there exists an interior point algorithm, that solves the general
homogeneous convex feasibility problem for Gaussian random inputs
in expected $O(\sqrt{\nu_C} (\ln\nu_C+\ln n))$ number 
of interior-point iterations. Here, $\nu_C$ denotes the 
complexity parameter of the barrier function for the 
reference cone $C$. For linear programming, second-order programming, 
and semidefinite programming, the expected number of 
interior-point iterations is $O(\sqrt{n}\ln n)$.
\end{corollary}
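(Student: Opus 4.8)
The plan is to combine Theorem~\ref{thm:estim-conefree} with a condition-based iteration bound for interior-point methods applied to the homogeneous conic feasibility problem~\eqref{eq:P}/\eqref{eq:D} over the self-scaled cone~$C$. All the probabilistic content sits in Theorem~\ref{thm:estim-conefree}; the corollary follows by feeding its expectation estimate into a deterministic complexity bound.

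First I would invoke from the interior-point literature (Renegar's analysis of conic feasibility in terms of his condition number~\cite{rene:95a}, together with the self-scaled barrier theory of Nesterov--Todd, which supplies the $\sqrt{\nu_C}$ iteration rate) an algorithm that, given a full-rank matrix $A\in\IR^{m\times n}$, decides the alternative~\eqref{eq:P}/\eqref{eq:D} within $O\big(\sqrt{\nu_C}\,(\ln\nu_C+\ln\CR(A))\big)$ interior-point iterations, where $\CR(A)$ denotes Renegar's condition number. On the Lebesgue-null set of rank-deficient matrices I simply set the iteration count to $0$, which does not affect the expectation. To remove the dependence on the particular matrix representing $W:=\im A^{T}$, I would prepend a cheap preprocessing step: compute, via one $QR$ or SVD factorization (not counted as an interior-point iteration), a matrix $\tilde A$ with orthonormal rows and $\im\tilde A^{T}=W$. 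Then $\kappa(\tilde A)=1$, so the basic relation~\eqref{eq:C<=R<=k*C} forces $\CR(\tilde A)=\CG(W)$, and running the algorithm on $\tilde A$ terminates within $O\big(\sqrt{\nu_C}\,(\ln\nu_C+\ln\CG(W))\big)$ iterations. (Alternatively one could skip the preprocessing and use $\ln\CR(A)\le\ln\kappa(A)+\ln\CG(W)$ together with the classical estimate $\IE[\ln\kappa(A)]=O(\ln n)$ for Gaussian~$A$; the conclusion is unchanged.)

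Next I would take expectations. For standard Gaussian $A$ the subspace $W=\im A^{T}$ is uniformly distributed on $\Gr_{n,m}$, and since $\CG\ge 1$ gives $\ln\CG(W)\ge 0$ and the iteration bound holds deterministically,
\[
  \IE\big[\#\text{iterations}\big]\;=\;O\big(\sqrt{\nu_C}\,(\ln\nu_C+\IE[\ln\CG(W)])\big)\;=\;O\big(\sqrt{\nu_C}\,(\ln\nu_C+\ln n)\big),
\]
the last step being Theorem~\ref{thm:estim-conefree}, which yields $\IE[\ln\CG(A)]<1.5\ln n+1.5$. For the three named problem classes I then substitute the complexity parameter of the standard self-scaled barrier: $\nu_C=n$ for $C=\IR_+^{n}$; $\nu_C=2r\le 2n$ for $C=\mcL^{n_1}\times\cdots\times\mcL^{n_r}$; and $\nu_C=k\le n$ for $C=\Sym^k_+$, where $n=\binom{k+1}{2}$. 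In all three cases $\nu_C\le n$ and $\ln\nu_C\le\ln n$, so the expected number of interior-point iterations is $O(\sqrt n\,\ln n)$.

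The hard part is not the probabilistic step --- a one-line application of Theorem~\ref{thm:estim-conefree} --- but pinning down (or assembling from Renegar's framework plus the self-scaled analysis) the precise interior-point scheme whose iteration count is simultaneously $O(\sqrt{\nu_C})$ in the barrier parameter and merely logarithmic in the conditioning, and verifying that it terminates with a certificate for one of the \emph{weak} alternatives~\eqref{eq:P}/\eqref{eq:D} rather than with an approximately optimal point. Once that building block is fixed, the remaining estimates are routine.
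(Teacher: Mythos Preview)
Your proposal is correct and follows essentially the same line as the paper's proof: cite a condition-based iteration bound $O(\sqrt{\nu_C}\ln(\nu_C\CR(A)))$ for the self-scaled case (the paper uses~\cite{VRP:07} directly rather than assembling it from Renegar and Nesterov--Todd), reduce $\CR$ to $\CG$, apply Theorem~\ref{thm:estim-conefree}, and substitute the standard barrier parameters. The one genuine difference is that your primary route orthonormalizes~$A$ so that $\kappa(\tilde A)=1$ and $\CR(\tilde A)=\CG(W)$, whereas the paper keeps the original Gaussian~$A$, uses $\ln\CR(A)\le\ln\kappa(A)+\ln\CG(W)$ from~\eqref{eq:C<=R<=k*C}, and invokes $\IE[\ln\kappa(A)]=O(\ln n)$ from~\cite{CD:05}; you already list this as your alternative, so the two arguments coincide.
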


\begin{proof}
In~\cite{VRP:07} the authors describe an interior-point algorithm 
that solves the general homogeneous convex feasibility problem,
for a self-scaled cone~$C$ with a self-scaled barrier function, in
$O(\sqrt{\nu_C}\ln(\nu_C\CR(A)))$ interior-point iterations.

The typical barrier functions for (LP), (SOCP), and (SDP), respectively, 
have the following complexity parameter $\nu_C$: 
  \[ C=\IR_+^n \;:\; \nu_C = n ,\qquad 
     C=\mcL^{n_1}\times\ldots\times \mcL^{n_r} \;:\; \nu_C = 2\,r ,\qquad 
     C=\Sym^k_+ \;:\; \nu_C = k  . \]
In particular, $\nu_C\leq n$ in all these cases.

By \eqref{eq:C<=R<=k*C}, we have 
$\IE[\ln\CR(A)]  \leq \IE[\ln\kappa(A)] + \IE[\ln\CG(A)]$, 
hence a probabilistic analysis of~$\CR$ reduces to that of $\CG$ and $\kappa$. 
Theorem~\ref{thm:estim-conefree} states that $\IE[\ln\CG(A)]=O(\ln(n))$.
In~\cite{CD:05} it was shown that $\IE\left[\ln\kappa(A)\right] = O(\ln n)$
for Gaussian matrices $A\in\IR^{m\times n}$.
Combining these estimates, we obtain $\IE[\ln(\CR(A))]=O(\ln n)$.
We conclude that the expected number of interior point iterations of the algorithm 
in~\cite{VRP:07} is bounded by $O(\sqrt{\nu_C}\ln(\nu_C + \ln n))$.
\end{proof}

\begin{remark}
In~\cite{VRP:07} it is also shown
that the condition number of the system of equations, that is solved
in each interior-point iteration, is bounded by a factor of~$\CR(A)^2$.
Therefore, our results also imply bounds 
on the expected cost of each iteration in the above-mentioned
algorithm. 
\end{remark}

\subsection{Intrinsic volumes and Weyl's tube formula}\label{se:intro-intrinsic}

The analysis of the homogeneous convex feasibility problem
naturally finds its place in the domain of spherical convex geometry.
Indeed,  a linear subspace intersects a convex cone nontrivially if and only if  
the corresponding subsphere of the unit sphere intersects 
the corresponding spherically convex set. 
While Euclidean convex geometry is a classical and extensively studied subject,
the situation for spherical convex geometry is much less established. 
For more information, we refer to the theses \cite{Gl,Gl:Summ}, \cite{am:thesis}, 
the article~\cite{GHS}, and Section~6.5 in~\cite{SW:08}.

We call a subset $K\subseteq S^{n-1}$ \emph{(spherically) convex} if for all 
$p,q\in K$ with $q\neq\pm p$, the great circle segment between~$p$
and~$q$ is contained in $K$. This is equivalent to the requirement that 
$C:=\cone(K) := \{\lambda p \mid \lambda\geq 0,\; p\in K\}$
is a convex cone. Note that $K=C\cap S^{n-1}$. 
Let $d(p,q):=\arccos(\langle p,q\rangle)$ 
denote the (spherical) distance between points $p,q\in S^{n-1}$.
We are interested in the volume of the tube 
$\mcT(K,\alpha) := \big\{ p\in S^{n-1} \mid d(p,K) \leq \alpha \big\}$
of radius $\alpha$ around~$K$.
In general, unlike in Euclidean space, 
the tubes $\mcT(K,\alpha)$ are not convex, 
which causes technical difficulties; 
see Remark~\ref{rem:convexity-tubes}.

The volumes of the unit sphere $S^{n-1} :=\{x\in\IR^n \mid \|x\| =1\}$  
and the unit ball $B_n:=\{x\in \IR^n\mid  \|x\| \le 1\}$, respectively, 
are given by ($n\ge 1$)
\begin{equation}\label{eq:def-O_k}
   \mcO_{n-1}  := \vol_{n-1}(S^{n-1}) = \frac{2\pi^{n/2}}{\Gamma(n/2)} ,\qquad 
  \omega_n := \vol_n B_n = \frac{\mcO_{n-1}}{n} 
           = \frac{\pi^{\frac{n}{2}}}{\Gamma(\frac{n+2}{2})} 
\end{equation}   
(we also set $\omega_0:=1$). We define for $0\le k \le n-1$ and 
$0\leq\alpha\leq\frac{\pi}{2}$ the functions 
\begin{equation}\label{eq:def-I}
  I_{n,k}(\alpha) := \int_0^\alpha \cos(\rho)^k\cdot \sin(\rho)^{n-2-k} \, d\rho \; .
\end{equation}
If $S^k$ denotes a $k$-dimensional unit subsphere of $S^{n-1}$, then 
it is known that 
(cf.~\cite[Lem.~20.5]{Condition}) 
\begin{equation}\label{eq:O_(n-1,k)(alpha)}
  \mcO_{n-1,k}(\alpha) :=\vol_{n-1} \mcT(S^k,\alpha) = 
  \mcO_k\cdot\mcO_{n-2-k}\cdot I_{n,k}(\alpha) \; .
\end{equation}
More generally, the volume of $\mcT(K,\alpha)$ can be expressed in terms of certain quantities 
assigned to $K$ that we describe next. 

Let $C\subseteq\IR^n$ be a polyhedral cone. For $j=0,1,\ldots,n$, we denote by 
${\cal F}_j$ the set of the relative interiors of the $j$-dimensional faces of~$C$.
Moreover, we consider the canonical projection 
$\Pi_C\colon\IR^d\to C$, $x\mapsto \mathrm{argmin}\{\|x-y\|\mid y\in C\}$. 
Then the {\em $j$-intrinsic volume} $V_j(C)$ of~$C$ is defined by 
\begin{equation}\label{eq:V_j(C)-polyhdrl}
  V_j(C) := \sum_{F\in{\cal F}_j} \; \underset{x\in N(0,I_d)}{\Prob}\big\{\Pi_C(x)\in F\big\} ,
\end{equation}
where $N(0,I_n)$ stands for the standard Gaussian distribution on $\IR^n$.
In particular, we have $V_n(C)=\rvol(C\cap S^{n-1})$ and $V_0(C)=\rvol(\breve{C}\cap S^{n-1})$.
It is easy to check that $V_j(\R^n_+)= \binom{n}{j} 2^{-n}$. 

By continuous extension, one can assign intrinsic volumes $V_0(C),\ldots,V_n(C)$ 
to any closed convex cone $C\subseteq\IR^n$, see Section~\ref{se:intr-vol} 
and \cite{ambu:11c}.  
If $K\subseteq S^{n-1}$ is a closed spherically convex subset, we define 
$V_{j-1}(K) := V_j(C)$, where $C:=\cone(K)$
(the index change has certain advantages).

The following result is essentially due to Weyl~\cite{weyl:39} 
(compare~\cite{GHS} and~\cite{SW:08} for more information):
for $0\leq\alpha\leq\frac{\pi}{2}$ we have 
\begin{equation}\label{eq:def-V_j}
  \vol_{n-1}\mcT(K,\alpha) = \vol_{n-1}(K) + \sum_{j=0}^{n-2} V_j(K)
                                          \cdot \mcO_{n-1,j}(\alpha) \; .
\end{equation}
From \eqref{eq:V_j(C)-polyhdrl} one can derive that for closed convex cones $C_1$ and $C_2$
we have  
\begin{equation}\label{eq:Vj-conv}
V_j(C_1\times C_2) = \sum_{i=0}^j V_i(C_1) V_{j-i}(C_2),
\end{equation}
a fact that apparently was first observed in \cite{am:thesis}.

\subsection{Main results II: Grassmannian tube formula} 

Let us return to the situation of Section~\ref{se:intro-Grassmann}. 
In order to bound the volume of the tube $\mcT(\SG_m(C),\alpha)$, 
we need to introduce some notation. 

Using the analytic extension of the binomial coefficients  
$\binom{x}{y}:=\frac{\Gamma(x+1)}{\Gamma(y+1)\cdot \Gamma(x-y+1)}$, 
for $x>-1$ and $-1<y<x+1$, we have 
\begin{equation}\label{eq:binom(n/2,m/2)=...}
     \binom{n/2}{m/2} = 
    \frac{\Gamma(\frac{n+2}{2})}
              {\Gamma(\frac{m+2}{2})\cdot\Gamma(\frac{n-m+2}{2})} =
  \frac{\omega_m\cdot \omega_{n-m}}{\omega_n} \; .
  \end{equation}
We also define the {\em flag coefficients} (cf.~\cite{KR:97})
\begin{equation}\label{eq:def-flcoeff}
 \rbinom{n}{m} := \binom{n}{m}\binom{n/2}{m/2}^{-1} 
                            = \frac{\sqrt{\pi}\cdot \Gamma(\frac{n+1}{2})}
                                {\Gamma(\frac{m+1}{2})\cdot \Gamma(\frac{n-m+1}{2})} \; .
\end{equation}

We fix a regular cone $C\subseteq\IR^n$ and put $\SG_m:=\SG_m(C)$. 
Recall the definition of the tube $\mcT(\Sigma_m,\alpha)$ from \eqref{eq:basic-tube-interpretation}.  
We define the \emph{primal} and the \emph{dual tube around $\Sigma_m$}, respectively, by 
\begin{equation}\label{eq:TP-TD}
  \TP(\Sigma_m,\alpha) := \mcT(\Sigma_m,\alpha)\cap \PG_m(C) \;,\qquad \TD(\Sigma_m,\alpha) 
                       := \mcT(\Sigma_m,\alpha)\cap \DG_m(C) \; .
\end{equation}
The following tube formula is the technical heart of our work. 
In fact, Theorem~\ref{thm:estim-conefree} will follow by bounding the right-hand side in 
this formula, taking into account that $V_j(C) \le \frac12$. 

\begin{thm}\label{thm:tube-form}
Let $C\subseteq\IR^n$ be a regular cone. Then, for $1\leq m < n$ 
and $0\leq\alpha\leq\frac{\pi}{2}$, we have 
\begin{equation}\label{eq:vol-tube-Sigma-est-nice}
  \rvol \, \TP(\Sigma_m(C),\alpha) \ \leq\  \frac{2 m (n-m)}{n} \binom{n/2}{m/2}\; 
  \sum_{j=0}^{n-2} V_{j+1}(C) \rbinom{n-2}{j} \;
  \sum_{i=0}^{n-2} d_{ij}^{nm}  I_{n,i}(\alpha) \, ,
\end{equation}
where the constants $d_{ij}^{nm}$ are defined for 
$i+j+m\equiv 1 \pmod 2$, 
$0\leq \tfrac{i-j}{2}+\tfrac{m-1}{2}\leq m-1$, and 
$0\leq \tfrac{i+j}{2}-\tfrac{m-1}{2}\leq n-m-1$, by
\begin{equation}\label{eq:form-d_(ij)}
  d_{ij}^{nm} \;:=\; \frac{\binom{m-1}{\frac{i-j}{2}+\frac{m-1}{2}}\cdot 
  \binom{n-m-1}{\frac{i+j}{2}-\frac{m-1}{2}}}{\binom{n-2}{j}} \; ;
\end{equation}
and defined by $d_{ij}^{nm}:=0$ otherwise. 
(See Table~\ref{tab:coeffs-D}.)

The same upper bound holds for the volume of 
$\TD(\Sigma_m,\alpha)$.  
\end{thm}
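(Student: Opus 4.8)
The plan is to reduce the general case to the case of a cone $C$ with smooth boundary, establish the tube formula there by differential-geometric means, and then pass to the limit. First I would argue, as announced in the introduction, that a general regular cone $C$ can be approximated by regular cones $C_\epsilon$ with $C^\infty$ boundary, in such a way that $\Sigma_m(C_\epsilon)\to\Sigma_m(C)$ in a sense strong enough that $\rvol\,\TP(\Sigma_m(C_\epsilon),\alpha)\to\rvol\,\TP(\Sigma_m(C),\alpha)$ and $V_{j+1}(C_\epsilon)\to V_{j+1}(C)$ (the latter using the continuity of intrinsic volumes invoked after \eqref{eq:V_j(C)-polyhdrl}); since the right-hand side of \eqref{eq:vol-tube-Sigma-est-nice} depends continuously on the $V_{j+1}(C)$, it suffices to prove the inequality for smooth $C$. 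For such $C$, the excerpt already notes that $\Sigma_m(C)$ is a smooth hypersurface in $\GR$, so the tube $\TP(\Sigma_m,\alpha)$ is swept out by normal geodesics of bounded length, and its volume can in principle be computed by the coarea/Weyl-tube machinery: integrate the Jacobian of the normal exponential map $(w,s,\rho)\mapsto\exp_w(\rho\,\nu)$ over $\Sigma_m\times[0,\alpha]$, where $\nu$ is the inward (primal-side) unit normal. The $\rho$-integration produces exactly the functions $I_{n,i}(\alpha)$ once the Jacobian is expanded in the basis $\{\cos^i\rho\sin^{n-2-i}\rho\}$ dictated by the sectional curvatures of $\GR$.

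The heart of the argument is therefore the computation of this Jacobian and of the resulting integral over $\Sigma_m$. Here I would use the explicit description of the tangent space $T_W\GR\cong \mathrm{Hom}(W,W^\perp)$, the known curvature tensor of the Grassmannian, and the local geometry of $\Sigma_m$ near a touching subspace $W$: if $W$ meets $\partial C$ along a ray spanned by a unit vector $x$, then the ``critical'' direction is the rank-one map carrying $x$ into the outward normal of $\partial C$ at $x$, and the second fundamental form of $\Sigma_m$ is governed by the curvature (Hessian of the support-type function) of $\partial C$ at $x$. The Jacobian factorizes into a part coming from the $m(n-m)-1$ tangential directions to $\Sigma_m$ (contributing Jacobi-field factors $\cos\rho$ or $\sin\rho$ according to whether the direction is ``parallel'' or ``perpendicular'' to $x$, of which there are $m-1$ and $n-m-1$ respectively — this is precisely where $\binom{m-1}{\cdot}$ and $\binom{n-m-1}{\cdot}$ enter) and a principal-curvature part. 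After integrating the principal-curvature part against the relevant measure on the locus of touching pairs $(W,x)$, and bounding the curvature contribution of $\partial C$ by the intrinsic-volume functionals $V_{j+1}(C)$, one obtains the binomial/flag-coefficient prefactors $\tfrac{2m(n-m)}{n}\binom{n/2}{m/2}$ and $\rbinom{n-2}{j}$. The parity constraint $i+j+m\equiv 1\pmod 2$ and the range conditions on $(i-j)/2+(m-1)/2$ and $(i+j)/2-(m-1)/2$ are forced by the combinatorics of choosing how many of the $m-1$ parallel and $n-m-1$ perpendicular Jacobi factors are cosines versus sines, which is why $d_{ij}^{nm}$ vanishes outside those ranges.

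A cleaner route, which I would actually pursue to avoid reproving Weyl-type formulas from scratch, is to relate the Grassmannian tube to a spherical tube fiberwise. The key structural fact is that the incidence variety $\{(W,x): x\in W\cap S^{n-1}\}$ fibers over $\GR$ (with fiber $S^{m-1}$) and over $S^{n-1}$ (with fiber $\Gr_{n-1,m-1}$); pushing the uniform measure around this correspondence turns the Grassmannian tube volume into an integral over $S^{n-1}$ of a spherical-tube quantity attached to $K:=C\cap S^{n-1}$, at which point \eqref{eq:def-V_j} and the functions $\mcO_{n-1,j}(\alpha)=\mcO_j\mcO_{n-2-j}I_{n,j}(\alpha)$ from \eqref{eq:O_(n-1,k)(alpha)} can be invoked directly, converting the spherical intrinsic volumes $V_j(K)=V_{j+1}(C)$ into the sum over $j$, and the $S^{m-1}$-fiber integration into a Beta-function factor that reassembles $\binom{n/2}{m/2}$ and the flag coefficients; the double sum with $d_{ij}^{nm}$ then comes from re-expanding the product $\mcO_{n-1,j}(\alpha)$ (a polynomial in $\cos\rho,\sin\rho$ on each fiber of a different dimension) into the $I_{n,i}(\alpha)$ basis, and the inequality (rather than equality) in \eqref{eq:vol-tube-Sigma-est-nice} appears because the tube in $\GR$ is genuinely contained in, but not equal to, the locus produced by this correspondence when $\alpha$ exceeds the critical radius of $K$ (cf.\ Remark~\ref{rem:convexity-tubes}), so overlaps are counted with multiplicity.

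The main obstacle I anticipate is controlling what happens past the critical radius: for small $\alpha$ everything is an honest tube and the identities are exact, but the theorem asserts a bound valid for all $\alpha\le\pi/2$, and there the normal exponential map is no longer injective and the ``tube'' $\TP(\Sigma_m,\alpha)$ is not the image of a single embedding. Making the approximation/limit argument in the first step uniform in $\alpha$, and making the overlap-counting in the correspondence argument a clean inequality with the stated constants (and with the correct parity/range support for $d_{ij}^{nm}$), is where the real work lies; the statement that the same bound holds for $\TD(\Sigma_m,\alpha)$ should then follow by the duality $W\mapsto W^\perp$, $C\mapsto\breve C$, which swaps $\PG_m$ with $\DG_m$ and $m$ with $n-m$, together with the symmetry $V_j(C)+V_j(\breve C)$-type relations and the invariance of $\binom{n/2}{m/2}$ and $\rbinom{n-2}{j}$ under $m\leftrightarrow n-m$.
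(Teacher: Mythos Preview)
Your overall skeleton---approximate by smooth cones, use that $\Sigma_m$ is then a hypersurface, cover $\TP(\Sigma_m,\alpha)$ by the image of the normal exponential map, and bound the volume via the coarea formula by integrating the Jacobian---matches the paper exactly, and the paper's reduction step and the source of the inequality (non-injectivity of $\Psi$ plus a triangle inequality) are as you anticipate. But the core computation is not what you describe, and the missing idea is substantial. The Jacobian of $\Psi$ does \emph{not} decompose into ``$m-1$ parallel and $n-m-1$ perpendicular Jacobi-field factors'' of the form $\cos\rho$ or $\sin\rho$; the tangent space $T_W\Sigma_m$ splits orthogonally into a \emph{vertical} piece of dimension $(m-1)(n-m-1)$ (the fiber direction of the Grassmann bundle $\Gr(M,m-1)\to M$), on which $D\Psi$ is an isometry and contributes nothing, and a \emph{horizontal} piece of dimension $n-2$, on which the Jacobian is a genuinely mixed object depending on both the Weingarten map $\W_p$ of $M=\partial K$ and the subspace $Y=W\cap p^\perp$. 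The paper introduces a new algebraic gadget, the \emph{twisted characteristic polynomial} $\ch_Y(\varphi,t)=t^{k-\ell}\det\bigl(\varphi-(t\,\Pi_Y-\tfrac{1}{t}\Pi_{Y^\perp})\bigr)$, and shows that $|\det D_{(W,t)}\Psi|=(1+t^2)^{-n/2}\,|\ch_Y(\W_p,-t)|/\det_Y(\W_p)$.

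The constants $d_{ij}^{nm}$ then arise not from counting cosine/sine factors but from a second nontrivial step: after applying coarea along the bundle map $\Sigma_m\to M$, one must average $|\ch_Y(\W_p,-t)|$ over $Y\in\Gr(T_pM,m-1)$ uniformly at random. The paper proves a separate theorem that $\IE_Y[\ch_Y(\varphi,t)]=\sum_{i,j}d_{ij}\,\sigma_{k-j}(\varphi)\,t^{k-i}$ with exactly the claimed coefficients (established by multilinearity of the determinant plus evaluation at $\varphi=sI$), and then bounds $\IE_Y[|\ch_Y|]$ by the version with $|d_{ij}|$ when $\varphi$ is positive semidefinite. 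Only after this averaging do the $\sigma_{n-2-j}(\W_p)$ appear, and integrating them over $M$ yields the $V_{j+1}(C)$ via the smooth formula for intrinsic volumes. Your alternative incidence-variety route---pushing the problem to a spherical tube and invoking Weyl directly---is not what the paper does and does not obviously produce the right answer: the tube around $\Sigma_m$ in $\Gr_{n,m}$ is not a pullback of a tube around $K$ in $S^{n-1}$, because the normal geodesic flow in the Grassmannian interacts with the $Y$-direction in a way that only the twisted characteristic polynomial captures. Without that object and its averaging formula, the specific form of $d_{ij}^{nm}$ (and in particular the denominator $\binom{n-2}{j}$) has no clear origin in your outline.
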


\begin{remark}\label{re:comm-tube-formula}
\begin{enumerate}
\item It can be shown (see~\cite{am:thesis} for details) 
that~\eqref{eq:vol-tube-Sigma-est-nice} in fact holds 
\emph{with equality} if $\mcT(C\cap S^{n-1},\alpha)$
is convex and if $d_{ij}^{nm}$ is replaced by 
$(-1)^{\frac{i-j}{2}-\frac{m-1}{2}}\cdot d_{ij}^{nm}$.
This reveals that the estimate in Theorem~\ref{thm:tube-form} 
is close to being sharp.

\item In the case $m=1$ we have $\Gr_{n,1} = \mathbb{P}^{n-1}$ and  
$\TP(\SG_1,\alpha)$ is the image of $\mcT(K,\alpha) \setminus K$ 
under the canonical map $S^{n-1}\to\mathbb{P}^{n-1}=\Gr_{n,1}$. 
Hence 
$\rvol\TP(\SG_1,\alpha) = (\vol_{n-1} \mcT(K,\alpha) - \vol_{n-1} K)/(\mcO_{n-1}/2)$. 
Taking into account that $d_{ij}^{n1} = \delta_{ij}$, 
one can check that~\eqref{eq:vol-tube-Sigma-est-nice}
actually gives the expression in Weyl's tube formula~\eqref{eq:def-V_j}.

\item The symmetry relations 
$d_{i,n-2-j}^{n,n-m} = d_{ij}^{nm}$ and 
$d_{n-2-i,n-2-j}^{nm} = d_{ij}^{nm}$ hold.  

\item The bound~\eqref{eq:vol-tube-Sigma-est-nice} has a remarkable symmetry. 
The involution $\iota\colon \Gr_{n,m}\to\Gr_{n,n-m}$, $\mcW\mapsto\mcW^\bot$ 
maps $\PG_m(C)$ to $\DG_{n-m}(\breve{C})$, and therefore
$\iota(\Sigma_m(C))=\Sigma_{n-m}(\breve{C})$, cf.~\cite{ambu:11a}.
In particular, $\iota$ maps the dual tube 
$\mcT^\Dua(\Sigma_m(C),\alpha)$ of $C$ to the primal tube 
$\mcT^\Pri(\Sigma_{n-m}(\breve{C}),\alpha)$ of~$\breve{C}$,  
and so they must have the same volume. 
This symmetry is reflected in~\eqref{eq:vol-tube-Sigma-est-nice}. 
Indeed, if we simplify the upper bound for 
$\rvol \mcT^\Pri(\Sigma_{n-m}(\breve{C}),\alpha)$ 
in~\eqref{eq:vol-tube-Sigma-est-nice}  
by using the duality property $V_{j+1}(\breve{C})=V_{n-2-j+1}(C)$ 
(cf.\ Proposition~\ref{prop:facts-intrvol}),  
by changing the summation via $j\leftarrow n-2-j$, 
and by using the symmetry relations $d_{i,n-2-j}^{n,n-m} = d_{ij}^{nm}$,  
$\binom{n/2}{(n-m)/2} = \binom{n/2}{m/2}$, and  
$\rbinom{n-2}{n-2-j} = \rbinom{n-2}{j}$, 
then we end up with the upper bound for 
$\rvol \mcT^\Pri(\Sigma_{m}(C),\alpha)$ from~\eqref{eq:vol-tube-Sigma-est-nice}.

\end{enumerate}
\end{remark}

\begin{table}[h]
\def\abstand{0.1}
\def\myX{\rule{1mm}{0mm}}
\begin{align*}
     D_{7,1} & = \left(\begin{smallmatrix}
                        1 &  &  &  &  &   \\[1mm]
                          & 1 &  &  &  &  \\[1mm]
                          &  & 1 &  &  &  \\[1mm]
                          &  &  & 1 &  &  \\[1mm]
                          &  &  &  & 1 &  \\[1mm]
                          &  &  &  &  & 1 \end{smallmatrix}\right) \; , &
     D_{7,2} & = \left(\begin{smallmatrix}
                          & \frac{1}{5} \\[\abstand mm]
                         1 & 0 & \frac{2}{5} \\[\abstand mm]
                          & \myX\frac{4}{5}\myX & 0 & \frac{3}{5} \\[\abstand mm]
                          &  & \myX\frac{3}{5}\myX & 0 & \frac{4}{5} \\[\abstand mm]
                          &  &  & \myX\frac{2}{5}\myX & 0 & 1 \\[\abstand mm]
                          &  &  &  & \myX\frac{1}{5}\myX \end{smallmatrix}\right) \; , &
     D_{7,3} & = \left(\begin{smallmatrix}
                          &  & \frac{1}{10} \\[\abstand mm]
                          & \myX\frac{2}{5}\myX & 0 & \frac{3}{10} \\[\abstand mm]
                         1 & 0 & \myX\frac{3}{5}\myX & 0 & \frac{3}{5} \\[\abstand mm]
                          & \frac{3}{5} & 0 & \myX\frac{3}{5}\myX & 0 & 1 \\[\abstand mm]
                          &  & \frac{3}{10} & 0 & \myX\frac{2}{5}\myX \\[\abstand mm]
                          &  &  & \frac{1}{10} \end{smallmatrix}\right) \; ,
\\[5mm]
     D_{7,6} & = \left(\begin{smallmatrix}
                         &  &  &  &  & 1 \\[1mm]
                         &  &  &  & 1 &  \\[1mm]
                         &  &  & 1 &  &  \\[1mm]
                         &  & 1 &  &  &  \\[1mm]
                         & 1 &  &  &  &  \\[1mm]
                        1 &  &  &  &  &  \end{smallmatrix}\right) \; , &
     D_{7,5} & = \left(\begin{smallmatrix}
                          &  &  &  & \myX\frac{1}{5}\myX \\[\abstand mm]
                          &  &  & \myX\frac{2}{5}\myX & 0 & 1 \\[\abstand mm]
                          &  & \myX\frac{3}{5}\myX & 0 & \frac{4}{5} \\[\abstand mm]
                          & \myX\frac{4}{5}\myX & 0 & \frac{3}{5} \\[\abstand mm]
                         1 & 0 & \frac{2}{5} \\[\abstand mm]
                          & \frac{1}{5} \end{smallmatrix}\right) \; , &
     D_{7,4} & = \left(\begin{smallmatrix}
                          &  &  & \frac{1}{10} \\[\abstand mm]
                          &  & \frac{3}{10} & 0 & \myX\frac{2}{5}\myX \\[\abstand mm]
                          & \frac{3}{5} & 0 & \myX\frac{3}{5}\myX & 0 & 1 \\[\abstand mm]
                         1 & 0 & \myX\frac{3}{5}\myX & 0 & \frac{3}{5} \\[\abstand mm]
                          & \myX\frac{2}{5}\myX & 0 & \frac{3}{10} \\[\abstand mm]
                          &  & \frac{1}{10} \end{smallmatrix}\right) \; .
\end{align*}
  \caption{The coefficient matrices $D_{n,m}=(d_{ij}^{nm})_{i,j=0,\ldots,n-2}$ for $n=7$, $m=1,\ldots, 6$.}
  \label{tab:coeffs-D}
\end{table}

\subsection{Basic outline for proof of tube formula}\label{se:basic-outline} 

The basic idea of the proof relies on some Riemannian geometry. 
(We refer to~\cite{dC},~\cite{booth}, or~\cite[Ch.~1]{Chav} for some general background on Riemannian geometry.) 

Let $\M$ be a compact connected Riemannian manifold with distance metric~$d$.
The distance $d(p,\mcH)$ of a point $p\in\M$ to a nonempty closed subset $\mcH$ of $\M$ 
is defined as the minimum of $d(p,q)$ over all $q \in \mcH$. 
For $\alpha\ge 0$ consider the $\alpha$-neighborhood
$$
 \mcT(\mcH,\alpha) := \big\{ p\in \M\mid d(p,\mcH) \leq \alpha \big\}
$$
of $\mcH$ in $\M$.
We also call $\mcT(\mcH,\alpha)$ the \emph{tube of radius $\alpha$ around $\mcH$}. 
It is essential that the tube can be described in terms of the exponential map of~$\M$
if $\mcH$ is a smooth submanifold of~$\M$. 
For stating this, let $T\M$ denote the \emph{tangent bundle} of~$\M$, 
i.e., the disjoint union of all of its tangent spaces. 
The \emph{exponential map} of $\M$ is characterized as the map~$\exp\colon T\M\to \M$ 
such that for $p\in \M$ and $v\in T_p\M$, the curve
$\gamma\colon \IR\to \M, t\mapsto\exp_p(tv)$ 
is the \emph{geodesic} through $p$ in direction~$v$, that is, 
$\gamma(0)=p$ and $\dot{\gamma}(0)=v$.

We will only consider a special kind of geodesics in the Grassmann manifold $\Gr_{n,m}$,
that have a simple geometric meaning: 
Let $W\in\Gr_{n,m}$, and consider a two-dimensional linear subspace $E\subseteq\IR^n$ 
such that $\dim(E\cap W)=\dim(E\cap W^\bot)=1$.
Letting $R_t\in O(n)$ denote the rotation in $E$ 
by the angle~$t$ that keeps the vectors in $E^\perp$ fixed, 
the map $t\mapsto R_t(W)$ is a geodesic through~$W$.

Suppose now that $\mcH\subseteq\M$ is a compact hypersurface in~$\M$ 
with unit normal vector field~$\nu$. Consider the following smooth map 
induced by $\exp$: 
\begin{equation}\label{eq:psi-par}
 \psi\colon\mcH\times\R\to \M,\ (p,\theta)\mapsto \exp_p(\theta \nu(p)) .
\end{equation}
It is a well-known fact that 
\begin{equation}\label{eq:tube-char-exp}
 \mcT(\mcH,\alpha) = \psi\big(\mcH\times [-\alpha,\alpha]\big) \; .
\end{equation}
(For a proof, see~\cite[Addendum to Chap.~9, proof of Thm.~20]{spiv1}.)
Combining~\eqref{eq:tube-char-exp} with the coarea formula 
(compare~\eqref{eq:cor-coarea-Riem-2}), 
we obtain 
\begin{equation}\label{eq:basic-vol-inequ}
  \vol \mcT(\mcH,\alpha) \ \le\  \int_{(p,\theta)\in\mcH\times [-\alpha,\alpha]} |\det D_{(p,\theta)}\psi |\; d(\mcH\times\IR) .
\end{equation}
This inequality is the basis of the proof of Theorem~\ref{thm:tube-form}.
The main difficulty is to make effective use of the right-hand side in the specific situation at hand. 

In the case where the cone $C\subseteq\IR^n$ is such that 
$K:=C\cap S^{n-1}$ has a smooth boundary $M:=\partial K$ with positive curvature, 
we will prove that $\mcH=\SG_m(C)$ is a smooth hypersurface in~$\M=\GR$ and 
that $\mcH$ has a unit normal vector field 
(Proposition~\ref{pro:Sigma}). 
Since any $W\in\Sigma_m(C)$ intersects $K$ in 
a unique point $p\in M$ (cf.\ Lemma~\ref{prop:WcapK=p}), 
this defines a map
$\PiM \colon \SG_m(C)  \to M,\, W\mapsto p$. 
We will prove that $\PiM$ allows us to interpret $\SG_m(C)$ 
as the Grassmann bundle $\Gr(M,m-1)$ over $M$.
The most difficult part of the proof of Theorem~\ref{thm:tube-form}
is to understand the normal Jacobian $|\det D\psi|$ of the parameterization map~$\psi$ in our specific situation;  
see Theorem~\ref{thm:norm-Jac}. 
It turns out that the normal Jacobian is a certain subspace-dependent version of the characteristic polynomial 
of the Weingarten map of~$M$, 
for which we coined the name twisted characteristic polynomal
(cf.\ Section~\ref{sec:tw-charpol}).

\subsection{Main results III: improved probability estimates}

Assuming certain conjectures on the growth of the intrinsic volumes
of special cones~$C$, we can considerably improve the bounds in 
Theorem~\ref{thm:estim-conefree}.

In Section~\ref{se:intr-vol} we compute the intrinsic volumes 
\begin{equation}\label{eq:def-f_j(n)}
 f_j(n) := V_{j}(\mcL^n) = 2^{-n} \binom{\frac{n-2}{2}}{\frac{j-1}{2}}  \quad 
 \mbox{for $1\le j \le n-1$}  
\end{equation}
of the $n$-dimensional {\em Lorentz cone}
$$
 \mcL^n := \{x\in\IR^n\mid x_n\geq (x_1^2 + \cdots + \ldots,x_{n-1}^2)^{1/2}\} \; .
$$ 
(See \eqref{eq:f_0} 
for a formula for $f_n(n)=f_0(n)$.) 
Note that the sequence $f(n):=(f_0(n),\ldots,f_n(n))$ is symmetric, i.e., $f_{n-j}(n)=f_j(n)$, 
since $\mcL^n$ is self-dual. 

It will be convenient to compare the intrinsic volumes of a self-dual cone $C\subseteq\IR^n$ 
with the intrinsic volumes of the Lorentz cone $\mcL^n$. 
We thus define the \emph{excess~$\exc(C)$ of $C$ over the Lorentz cone} as
\begin{equation}\label{eq:def-v(C)}
  \exc(C) := \min_{0\leq j\leq n} \frac{V_{j}(C)}{f_j(n)} \; .
\end{equation}
In other words, $\exc(C)$ is the smallest constant such that the inequality 
$V_j(C) \leq \exc(C) f_j(n)$ is satisfied for all~$j$. 
Clearly, $\exc(\mcL^n) = 1$ by definition.
One can check that $\exc(\R^n_+) < \sqrt{2}$ and 
$\lim_{n\to\infty} \exc(\R^n_+) =1$.

Based on experiments, we set up the following conjecture. 

\begin{conjecture}\label{conj:v(C_1timesC_2)} 
The convolution of $f(n_1)$ with $f(n_2)$ satisfies 
$f(n_1) * f(n_2) \le 2f(n_1+n_2)$ for $n_1,n_2\ge 2$. 
\end{conjecture}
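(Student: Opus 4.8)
The plan is to first restate the conjecture geometrically. By the convolution formula~\eqref{eq:Vj-conv}, the $k$-th entry of $f(n_1)*f(n_2)$ is exactly $V_k(\mcL^{n_1}\times\mcL^{n_2})$, while $2f_k(n_1+n_2)=2V_k(\mcL^{n_1+n_2})$. Writing $N:=n_1+n_2$, both $\mcL^{n_1}\times\mcL^{n_2}$ and $\mcL^{N}$ are self-dual cones in $\IR^{N}$, so by conic duality of intrinsic volumes the two sequences $f(n_1)*f(n_2)$ and $f(N)$ are symmetric about the common centre $N/2$; the first has total mass $1$, the second total mass $2$. Thus the conjecture asserts that the mass-one convolution never pokes above twice the mass-one profile $f(N)$, which is believable because convolving two such distributions does not concentrate the result much more tightly than a single Lorentz profile of the combined dimension; indeed, numerically the worst ratio occurs at the centre and stays well below $2$ (it tends to $1$ as $N\to\infty$). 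So there is room to spare, and the game is to exploit it cheaply.

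The first, hands-on approach uses the closed form~\eqref{eq:def-f_j(n)}. For $1\le j\le n-1$ we have $f_j(n)=c_n\binom{(n-2)/2}{(j-1)/2}$ with $c_{n_1}c_{n_2}=c_{N}$, so for an interior index $k$ the constants cancel and the claim becomes the combinatorial inequality
\[
  \sum_{i}\binom{\tfrac{n_1-2}{2}}{\tfrac{i-1}{2}}\binom{\tfrac{n_2-2}{2}}{\tfrac{k-i-1}{2}}\ \le\ 2\binom{\tfrac{N-2}{2}}{\tfrac{k-1}{2}} .
\]
Splitting the left sum by the parity of $i$ separates it into a part with both binomial arguments integral and a part with both half-integral (their lower arguments always add up to $(k-2)/2$). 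The integral part is collapsed by Chu--Vandermonde to $\binom{(N-4)/2}{(k-2)/2}$; the half-integral part has no closed form but can be dominated by the same quantity using the log-concavity of $y\mapsto\binom{a}{y}$. Each parity class therefore contributes essentially one copy of $f_k(N)$, and that is exactly where the factor $2$ comes from. What makes this more than a one-liner is the off-by-one mismatch in the \emph{upper} binomial index: Vandermonde produces $\binom{(N-4)/2}{\cdot}$ whereas $f_k(N)$ carries $\binom{(N-2)/2}{\cdot}$, so one must still bound $\binom{(N-4)/2}{(k-2)/2}$ by $2\binom{(N-2)/2}{(k-1)/2}$ through a uniform $\Gamma$-ratio estimate --- easy away from the centre, where the second binomial is far larger, and a genuine but harmless (factor $<2$) computation near the centre. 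The odd-$k$ case is similar, but there every term mixes an integral and a half-integral argument, so it needs a half-shifted analogue of the Vandermonde step rather than the clean one.

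A second, more structural route sidesteps the half-integer sums. One first checks that $f(n)$ is log-concave: for $1\le j\le n-1$ this is immediate since $\log\binom{a}{y}=\mathrm{const}-\log\Gamma(y+1)-\log\Gamma(a-y+1)$ has second derivative $-\psi'(y+1)-\psi'(a-y+1)<0$, and the endpoint values $f_0(n)=f_n(n)$ from~\eqref{eq:f_0} fit the pattern. Log-concavity is preserved by convolution, so $f(n_1)*f(n_2)$ and $f(N)$ are both log-concave and both symmetric about $N/2$. The key step is to show that $f(n_1)*f(n_2)$ is \emph{more} log-concave than $f(N)$ --- precisely, that the ratio sequence $k\mapsto (f(n_1)*f(n_2))_k/f_k(N)$ is unimodal with its peak at $N/2$; equivalently, the successive ratios of $f(n_1)*f(n_2)$ decay faster than those of $f(N)$, which is consistent with $f(N)$ having the larger variance. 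Granting this, the whole inequality reduces to the single central estimate $(f(n_1)*f(n_2))_{N/2}\le 2f_{N/2}(N)$, i.e.\ to bounding $\sum_i f_i(n_1)f_{N/2-i}(n_2)$ against a central binomial coefficient --- again a comfortably non-tight Stirling-type bound.

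I expect the main obstacle to be exactly this relative-log-concavity comparison (equivalently, controlling the half-integral Vandermonde sums in the first approach): both come down to comparing the second differences of $\log f_j(n)$ for a \emph{product} of two Lorentz cones against those for a \emph{single} one, that is, comparing the convolution of the binomial profiles $\binom{(n_1-2)/2}{\cdot}$ and $\binom{(n_2-2)/2}{\cdot}$ with $\binom{(N-2)/2}{\cdot}$, where the half-integer shifts together with the upper-index mismatch $(N-4)/2$ versus $(N-2)/2$ make the estimate delicate and must be handled uniformly in $n_1,n_2,k$. A secondary nuisance is the boundary indices $k\in\{0,1,N-1,N\}$, where~\eqref{eq:def-f_j(n)} does not apply and one must argue directly from~\eqref{eq:f_0} and the normalisation $\sum_j f_j(n)=1$; there the inequality should be very slack, since $f_0(N)$, though exponentially small, still dominates the corresponding convolution entry.
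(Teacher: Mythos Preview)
The paper does not prove this statement: it is explicitly stated as a \emph{conjecture}, motivated by experiments, and no argument is given. So there is no ``paper's own proof'' to compare against.

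Your proposal is likewise not a proof but a plan, and you yourself flag the two places where it is incomplete. Both gaps are genuine. In the first approach, the Chu--Vandermonde step handles only the parity class with integral lower arguments, and you correctly note that the half-integral class has no closed form; saying it ``can be dominated by the same quantity using log-concavity'' is the entire content of the conjecture in disguise, not a lemma you have in hand. In the second approach, the reduction to the central value rests on the claim that the ratio $(f(n_1)*f(n_2))_k/f_k(N)$ is unimodal, i.e.\ that the convolution is \emph{more} log-concave than $f(N)$. Log-concavity of each sequence separately does not imply this; it is an additional comparison of second differences that you would have to establish uniformly in $n_1,n_2,k$, and you offer only the heuristic that $f(N)$ has larger variance.

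One further issue you understate: the boundary values $f_0(n_1),f_{n_1}(n_1)$ (and their $n_2$ counterparts) are not given by the binomial formula~\eqref{eq:def-f_j(n)} but by the integral~\eqref{eq:f_0}, and they contribute to \emph{every} entry of the convolution, not only to the extreme indices $k\in\{0,1,N-1,N\}$. These contributions are indeed small, but they must be controlled explicitly before the problem reduces to the clean binomial inequality you wrote down. In short, your outline is a reasonable attack plan, but the conjecture remains open after it.
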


From this conjecture and~\eqref{eq:Vj-conv} 
it easily follows that $\exc(C_1\times C_2) \leq 2\, \exc(C_1) \exc(C_2)$ 
for closed convex cones $C_1,C_2$. 
Therefore, 
$\exc(\mcL^{n_1}\times\ldots\times\mcL^{n_r}) \le 2^{r-1}$.
For the cone of positive semidefinite matrices, we make the following conjecture;
see \cite{ambu:11c} for motivation and experiments. 

\begin{conjecture}\label{conj:est-SDP}
We have  $\exc(\Sym^k_+) < 2$ for the cone~$\Sym^k_+$ of positive semidefinite matrices.
\end{conjecture}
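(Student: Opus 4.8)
The plan is to reduce the conjecture to the pointwise estimate $V_j(\Sym^k_+) < 2\, f_j(N)$ for all $0\le j\le N$, where $N=\binom{k+1}{2}=\dim\Sym^k$, and then to compare the two finite sequences $(V_j(\Sym^k_+))_{j}$ and $(f_j(N))_{j}$ directly. The first ingredient is a workable closed form for the intrinsic volumes $V_j(\Sym^k_+)$. One must \emph{not} use the polyhedral face formula \eqref{eq:V_j(C)-polyhdrl}: the boundary of $\Sym^k_+$ is curved, so the intrinsic volumes are spread over all indices $j$, not only the triangular numbers $\binom{r+1}{2}$ --- as already the case $k=2$ shows, where $\Sym^2_+$ is the circular cone of half-angle $\pi/4$ in $\IR^3$, so that $\Sym^2_+\cong\mcL^3$ and $\exc(\Sym^2_+)=1$. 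Instead I would start from the curvature/support-measure description of conic intrinsic volumes (the spherical analogue of Weyl's formula \eqref{eq:def-V_j}), integrating elementary symmetric functions of the principal curvatures over the smooth rank-$(k-1)$ stratum of $\partial\Sym^k_+$ together with the contributions of the lower-rank strata. Since those curvatures are explicit in terms of eigenvalue gaps, this turns into a Mehta--Selberg-type integral over the GOE eigenvalue ensemble (cf.\ \cite{ambu:11c}); I would also rewrite $f_j(N)=V_j(\mcL^N)$ in a form suited to asymptotics.

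Two structural facts drive the comparison: both $\Sym^k_+$ and $\mcL^N$ are self-dual, so both sequences are symmetric about their common statistical dimension $\delta=\sum_j j\,V_j=N/2$; and both are concentrated around $N/2$. I would split into regimes. In the \emph{bulk} $|j-N/2|=O(\sqrt{N})$, apply a local central limit theorem to each sequence --- obtained by a saddle-point analysis of the generating functions $\sum_j V_j(\Sym^k_+)x^j$ and $\sum_j f_j(N)x^j$ --- so that the ratio $V_j(\Sym^k_+)/f_j(N)$ is controlled by the two variances $\var(\Sym^k_+)$ and $\var(\mcL^N)$ (the latter explicit for circular cones). Because a centered Gaussian of variance $\sigma_1^2$ is dominated by $\sqrt{\sigma_1/\sigma_2}$ times one of variance $\sigma_2^2$, the bulk bound reduces to the variance inequality $\var(\Sym^k_+) < 4\,\var(\mcL^N)$, which I would establish --- with a definite margin, since the constant $2$ is tight --- by estimating $\var(\Sym^k_+)=\sum_j j^2 V_j-\delta^2$ from the eigenvalue integral. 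In the moderate- and large-deviation regimes $|j-N/2|\gg\sqrt{N}$, compare rate functions instead: the rate for $f_j(N)$ is elementary, while the rate for $V_j(\Sym^k_+)$ is governed by the (Dean--Majumdar-type) large-deviation rate function for the index of a GOE matrix known in the random-matrix literature; the required inequality becomes a convexity comparison of two explicit functions, which must be checked all the way to the edges $j=0,N$ (the solid-angle regime). Since the conjecture is a uniform bound over all $k$, I would then verify $\exc(\Sym^k_+)<2$ directly (exact or interval arithmetic on $V_j(\Sym^k_+)$ and $f_j(N)$) for $k\le k_0$, and use effective, non-asymptotic forms of the bulk and tail estimates for $k>k_0$.

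The hard part is twofold. First, extracting sharp \emph{uniform} asymptotics (bulk variance, large-deviation rate, edge behavior) from the intrinsic-volume formula for $\Sym^k_+$ is a genuine random-matrix computation and is where most of the work sits. Second, because $\exc(\Sym^k_+)$ is presumably largest for moderate $k$ and only just below $2$, closing the gap between the checked range $k\le k_0$ and the asymptotic range forces all constants to be explicit and tight --- in particular the margin in $\var(\Sym^k_+)<4\,\var(\mcL^N)$ matters. If $\exc(\Sym^k_+)\to 1$ as $k\to\infty$ (in analogy with $\exc(\R^n_+)\to 1$), the asymptotic step collapses to proving that limit and the whole difficulty moves to the finite range; if instead the limit lies strictly between $1$ and $2$, the variance comparison is the crux.
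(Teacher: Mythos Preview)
The statement you are attempting to prove is a \emph{conjecture} in the paper, not a theorem: the authors explicitly introduce it with ``we make the following conjecture; see~\cite{ambu:11c} for motivation and experiments,'' and the paper contains no proof, no sketch, and no claimed argument. There is therefore nothing in the paper to compare your proposal against.

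As for the proposal itself, it is not a proof but a research program. The outline is reasonable in spirit --- using the curvature description of conic intrinsic volumes to reduce $V_j(\Sym^k_+)$ to GOE-type integrals, then matching $(V_j(\Sym^k_+))_j$ against $(f_j(N))_j$ via a bulk CLT plus a large-deviation comparison --- and you correctly identify the self-duality and the common statistical dimension $N/2$. But several of the steps you treat as routine are in fact open or at least highly nontrivial. A local CLT for the intrinsic-volume sequence of $\Sym^k_+$ with effective error terms is not established in the literature you cite; the precise variance $\var(\Sym^k_+)$ and the large-deviation rate function for this sequence are not computed anywhere in the paper or in~\cite{ambu:11c}; and the bulk argument you propose (comparing two Gaussians via $\sqrt{\sigma_1/\sigma_2}$) would need the variance inequality to hold with the right sign and with enough slack to absorb the CLT error, which you have not verified. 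The edge regime $j\in\{0,N\}$ reduces to a solid-angle estimate for $\Sym^k_+$, which is itself a delicate random-matrix computation. In short, your plan is a plausible line of attack on an open problem, not a proof, and it should be presented as such.
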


The following theorem refines Theorem~\ref{thm:estim-conefree} in terms of the excess of the Lorentz cone.
We remark that, conditional on the above conjectures, the obtained
bounds on the condition number are \emph{independent of the dimension~$n$ of the ambient space},  
for second-order cone programming 
($C=\mcL^{n_1}\times\ldots\times\mcL^{n_r}$ with $r$ fixed ),
and for semidefinite programming, respectively. 

\begin{thm}\label{thm:estim-conedep}
Let $C\subseteq\IR^n$ be a self-dual cone.
If $A\in\IR^{m\times n}$, with $m\geq8$, is a standard Gaussian random matrix, 
then we have
\begin{align*}
   \Prob [\CG(A)>t] & \;<\; 20 \, \exc(C)\,\sqrt{m}\,\frac{1}{t}
      ,\quad \text{if $t>m$}  ,
\\ \IE\left[\ln\CG(A)\right] & \;\;<\;\; \ln m + \max\{\ln \exc(C),0\} + 3 .
\end{align*}
\end{thm}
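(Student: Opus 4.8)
The plan is to feed the self-duality of $C$ into the tube formula of Theorem~\ref{thm:tube-form}, collapse the resulting double sum using a few binomial identities, run an elementary estimate to obtain the probability bound, and then integrate the tail to get the expectation bound.

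First, put $\Sigma_m:=\Sigma_m(C)$ and $\alpha:=\arcsin(1/t)$. By \eqref{eq:basic-tube-interpretation} we have $\Prob[\CG(A)>t]\le\rvol\,\mcT(\Sigma_m,\alpha)$, and since $\mcT(\Sigma_m,\alpha)=\TP(\Sigma_m,\alpha)\cup\TD(\Sigma_m,\alpha)$, Theorem~\ref{thm:tube-form} bounds this by $\frac{4m(n-m)}{n}\binom{n/2}{m/2}\sum_{j=0}^{n-2}V_{j+1}(C)\,\rbinom{n-2}{j}\sum_{i=0}^{n-2}d_{ij}^{nm}\,I_{n,i}(\alpha)$. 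The hypothesis enters only through the excess: by its definition $V_{j+1}(C)\le\exc(C)\,f_{j+1}(n)$, and from \eqref{eq:def-f_j(n)} and \eqref{eq:def-flcoeff} one checks the clean identity $f_{j+1}(n)\,\rbinom{n-2}{j}=2^{-n}\binom{n-2}{j}$, which already removes the flag coefficients from the $j$-sum.

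The decisive step is then purely combinatorial. Writing $\binom{n-2}{j}\,d_{ij}^{nm}=\binom{m-1}{a}\binom{n-m-1}{b}$ with $a+b=i$ as in \eqref{eq:form-d_(ij)}, where $(a,b)$ sweeps out all admissible pairs with that sum as $j$ varies, Vandermonde's identity gives $\sum_{j}\binom{n-2}{j}\,d_{ij}^{nm}=\binom{n-2}{i}$; substituting \eqref{eq:def-I} and using the binomial theorem under the integral sign then yields $\sum_{i}\binom{n-2}{i}\,I_{n,i}(\alpha)=\int_0^\alpha(\cos\rho+\sin\rho)^{n-2}\,d\rho$. Altogether,
\begin{equation*}
  \Prob[\CG(A)>t] \;\le\; \frac{4m(n-m)}{n}\binom{n/2}{m/2}\,\exc(C)\,2^{-n}\int_0^\alpha(\cos\rho+\sin\rho)^{n-2}\,d\rho .
\end{equation*}
For $t>m\ge8$ one has $\alpha<\pi/4$, so $\cos\rho+\sin\rho$ is increasing on $[0,\alpha]$ and the integral is at most $\alpha(\cos\alpha+\sin\alpha)^{n-2}\le\alpha(1+\tfrac1t)^{n-2}\le\alpha(1+\tfrac1m)^{n-2}$; since moreover $t\alpha=t\arcsin(1/t)\le\tfrac{20}{19}$ for $t\ge8$, the whole probability bound reduces to the single elementary inequality $\frac{4m(n-m)}{n}\binom{n/2}{m/2}2^{-n}(1+\tfrac1m)^{n-2}\le19\sqrt m$ for all $n>m\ge8$. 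I expect this to be the only genuinely technical point: one bounds the flag coefficient $\binom{n/2}{m/2}$ by Stirling's formula and maximizes over $n$. It goes through with huge slack because $2^{-n}(1+\tfrac1m)^{n-2}$ decays geometrically (base $\le\tfrac9{16}$ when $m\ge8$) while $\binom{n/2}{m/2}$ grows only polynomially in $n$, so the left-hand side peaks near $n\approx m$ far below $\sqrt m$; the generous constant $20$ in the statement is exactly what absorbs the Stirling remainder. This establishes $\Prob[\CG(A)>t]<20\,\exc(C)\sqrt m/t$ for $t>m$.

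Finally, the expectation bound follows from $\IE[\ln\CG(A)]=\int_0^\infty\Prob[\CG(A)>e^s]\,ds$: bound the integrand by $1$ on $[0,\ln m]$ and by $20\,\exc(C)\sqrt m\,e^{-s}$ on $(\ln m,\infty)$, where $e^s>m$, and split the latter range at $s_1:=\max\{\ln m,\ \ln(20\,\exc(C)\sqrt m)\}$. If $\exc(C)\le\sqrt m/20$ this gives $\IE[\ln\CG(A)]\le\ln m+1$; otherwise it gives $\IE[\ln\CG(A)]\le s_1+1=\ln 20+\ln\exc(C)+\tfrac12\ln m+1$. Since $\ln 20<3$ and $m\ge8$ forces $\tfrac12\ln m\ge\ln 20-2$, both cases are dominated by $\ln m+\max\{\ln\exc(C),0\}+3$; this comparison is precisely where the hypothesis $m\ge8$ enters.
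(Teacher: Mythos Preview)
Your route is essentially the paper's own: replace $V_{j+1}(C)$ by $\exc(C)f_{j+1}(n)$, observe that the flag coefficient and the half-integer binomial cancel, and then apply Vandermonde over~$j$. The paper carries out exactly this computation (in the slightly different packaging of Lemma~\ref{cor:tube-form}), arriving at the same expression
\[
  \rvol\,\mcT(\Sigma_m,\alpha)\;\le\;8\,\exc(C)\,\frac{\Gamma(\tfrac n2)}{\Gamma(\tfrac m2)\Gamma(\tfrac{n-m}{2})\,2^{n/2}}\cdot\sum_i \binom{n-2}{i}I_{n,n-2-i}(\alpha),
\]
and then bounds the $i$-sum by $e^{n/m}/t$ (this is~\eqref{eq:lem-estimates--5}); your $\int_0^\alpha(\cos\rho+\sin\rho)^{n-2}\,d\rho\le\alpha(1+1/t)^{n-2}$ is the same estimate in closed form and indeed slightly sharper.

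However, there is a genuine numerical slip that undermines your assessment of the ``only genuinely technical point''. The identity you use is $f_{j+1}(n)\,\rbinom{n-2}{j}=2^{-n/2}\binom{n-2}{j}$, not $2^{-n}\binom{n-2}{j}$: the display~\eqref{eq:def-f_j(n)} in the paper has a typo, but the correct exponent $2^{-n/2}$ is used in~\eqref{eq:f_(n-(m+i-2k))} (and follows immediately from $\sin\tfrac\pi4=\cos\tfrac\pi4=2^{-1/2}$). With the correct factor your ``elementary inequality'' becomes
\[
  \frac{4m(n-m)}{n}\binom{n/2}{m/2}\,2^{-n/2}\,(1+\tfrac1m)^{n-2}\;\le\;19\sqrt m,
\]
and here the left-hand side equals $8\,\dfrac{\Gamma(n/2)}{\Gamma(m/2)\Gamma((n-m)/2)\,2^{n/2}}(1+\tfrac1m)^{n-2}$, which is essentially the quantity the paper calls $8\,g_m(n)$. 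Contrary to your intuition, this does \emph{not} peak near $n\approx m$ and is \emph{not} ``far below $\sqrt m$'': the base $(1+1/m)/\sqrt2\approx0.8$ decays slowly enough that the maximum over~$n$ occurs at $n\approx 2m+6$ and is close to $20\sqrt m$ (Lemma~\ref{lem:g_m(n)}). There is no huge slack here; the constant $20$ in the statement is actually tight for the argument, and the restriction $m\ge8$ already enters at this step, not only in the expectation calculation. The paper handles this by showing that $n\mapsto g_m(n)$ is log-concave, locating the maximizer, and evaluating there. Your sketch (``Stirling plus maximize over~$n$'') would need to reproduce this work; as written, the heuristic justification is wrong.

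Your expectation argument is fine and matches the paper's integration of the tail.
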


To conclude, let us point out that the shape of the distribution of the intrinsic volumes 
of a convex cone is currently a subject with a wealth of open questions. 
We make the following conjecture, that can be considered 
a spherical analog of the \emph{Alexandrov-Fenchel inequality}; 
cf.~\cite{Stanley, Schn:book}.

\begin{conjecture}\label{conj:log-conc}
For every closed convex cone $C\subseteq \IR^n$, 
the sequence $V(C)$ 
of its intrinsic volumes is
\emph{log-concave}, i.e., $V_j(C)^2\geq V_{j-1}(C)\, V_{j+1}(C)$
for $1\leq j < n$.
\end{conjecture}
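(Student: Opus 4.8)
The plan is to first reduce to polyhedral cones: the intrinsic volumes $V_0,\dots,V_n$ depend continuously on the cone (Section~\ref{se:intr-vol}, \cite{ambu:11c}), every closed convex cone is a limit of polyhedral ones, and the inequalities $V_j^2\ge V_{j-1}V_{j+1}$ cut out a closed set, so it suffices to treat polyhedral $C$. Splitting off the lineality space of $C$ only shifts the sequence $V(C)$ (padding it with leading zeros), so we may further assume $C$ pointed and full-dimensional; then $V_j(C)>0$ for all $0\le j\le n$ --- a full-dimensional polyhedral cone has faces of every dimension, and all internal and external solid angles are positive --- so $V(C)$ has no internal zeros, which matters for the operations below. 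In this case \eqref{eq:V_j(C)-polyhdrl} reads $V_j(C)=\sum_{\dim F=j}\alpha(F)\,\gamma(F,C)$, a sum over $j$-faces of the product of the solid angle $\alpha(F)$ of $F$ inside its span with the solid angle $\gamma(F,C)$ of the outer normal cone $N(F,C)$.

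Next I would pin down what is actually needed, by ruling out the usual strengthenings. By \eqref{eq:Vj-conv} the intrinsic volumes of an orthogonal product are the convolution $V(C_1\times C_2)=V(C_1)*V(C_2)$, and the convolution of two finite positive log-concave sequences is log-concave (a theorem of Hoggar), so the conjecture reduces to orthogonally indecomposable cones. But one cannot hope to finish by invoking real-rootedness or the Lorentzian / Hodge--Riemann machinery: the $45^\circ$ circular cone $\mathcal{L}^3$ has $V(\mathcal{L}^3)=\big(\tfrac12-\tfrac1{2\sqrt2},\,\tfrac1{2\sqrt2},\,\tfrac1{2\sqrt2},\,\tfrac12-\tfrac1{2\sqrt2}\big)$, which is log-concave but neither ultra-log-concave nor the coefficient list of a real-rooted polynomial, so those tools --- which typically deliver the stronger ultra-log-concavity --- do not apply. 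The face-sum formula is of no direct use either, since it writes $V_j(C)$ as a sum of (individually log-concave-looking) sequences, and sums of log-concave sequences need not be log-concave. So the argument must be genuinely more delicate than any of these routes.

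The approach I would then pursue mirrors the Euclidean case, where log-concavity of the quermassintegrals is the diagonal case of the Alexandrov--Fenchel inequality for mixed volumes. Concretely: (i) produce the conic analog of quermassintegrals --- the functionals linear in $(V_j(C))_j$ supplied by conic integral geometry, for instance $C\mapsto\IE_{A}[V_i(C\cap A)]$ over random subspaces $A$ (conic Crofton formula) or the tube-volume functionals $\alpha\mapsto\rvol\,\TP(\Sigma_m(C),\alpha)$ from Theorem~\ref{thm:tube-form} --- and extend them to a ``mixed'' multilinear pairing; (ii) prove the associated bilinear form has the Hodge-index signature, which would yield a conic Alexandrov--Fenchel inequality and hence the conjecture by specialization. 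Step (ii) is the main obstacle, and a serious one: there is no known spherical analog of the Alexandrov--Fenchel inequality, and the existing proofs of Alexandrov--Fenchel --- via toric or combinatorial Hodge theory, via Hilbert's eigenvalue argument for mixed discriminants, via the Bochner method of Shenfeld--van Handel --- all exploit structure (a convex support function, a normal fan carrying a genuine intersection product) whose conic counterpart is unclear, not least because the relevant tubes $\mathcal{T}(C\cap S^{n-1},\alpha)$ fail to be convex (Remark~\ref{rem:convexity-tubes}). I expect the Bochner / self-adjoint-operator method to be the most promising to adapt --- realizing $V(C)$ through a suitable auxiliary mixed-volume-type pairing and tracking signs along a deformation --- but absent such an input the plan proves the conjecture only for cones assembled by orthogonal products from low-dimensional and orthant-type pieces, with the cone $\Sym^k_+$ of positive semidefinite matrices remaining the crux.
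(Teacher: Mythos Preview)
The statement you were asked to prove is \emph{Conjecture}~\ref{conj:log-conc}: the paper does not prove it. The only thing the paper claims is that it has been verified for products of circular cones (hence for the positive orthant and for products of Lorentz cones), with the details deferred to~\cite{am:thesis,ambu:11c}. So there is no ``paper's own proof'' to compare your attempt against.

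That said, your write-up is not a proof either, and you are honest about this: you reduce to pointed full-dimensional polyhedral cones, observe that the product/convolution formula~\eqref{eq:Vj-conv} together with Hoggar's theorem reduces the question to indecomposable cones, correctly rule out ultra-log-concavity and real-rootedness via the circular-cone example, and then sketch an Alexandrov--Fenchel-type strategy whose crucial step (ii) --- a conic mixed-volume inequality with the right signature --- you explicitly flag as unavailable. This is a fair assessment of the landscape, and it is consistent with the paper's stance that the conjecture is open. The one substantive gap to be aware of is that your reduction via products only goes one way: proving the conjecture for the indecomposable pieces does let you assemble products, but the interesting indecomposable cones (e.g.\ $\Sym^k_+$) are exactly where no argument is offered, so the ``plan'' as stated does not actually yield the conjecture beyond the cases the paper already records as verified.
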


We have verified this conjecture for products of circular cones
(in particular for the positive orthant and for products of Lorentz cones); see~\cite{am:thesis, ambu:11c} for more information.

\subsection{Outline of paper}

We recall some basic facts from Riemannian geometry in Section~\ref{se:prelim}.  
Section~\ref{sec:spher-conv-geom} is a short review of spherical convex geometry,  
with particular emphasis on spherical intrinsic volumes.
In Section~\ref{se:tube-form} we derive the tube formula in Theorem~\ref{thm:tube-form}
from a main technical lemma about the normal Jacobian of the exponential map that parametrizes the tube around $\SG_m(C)$,
the proof of which we defer to Section~\ref{sec:proof-tube-form}.
Along the way, we introduce in Section~\ref{se:tube-form} an algebraic object of possible independent interest: 
to an endomorphism $\vp$ of a Euclidean vector space $V$
together with a linear subspace~$Y\subseteq V$, we assign 
its {\em twisted characteristic polynomial} $\ch_Y(\vp,t)$. 
For the proof of the tube formula we will use the fact that 
the expectation of $\ch_Y(\vp,t)$, 
with respect to a randomly chosed subspace~$Y$, 
is an explicit function of the characteristic polynomial of~$\vp$. 
In the interest of clarity, we defer the proof of this last fact to the appendix.

In Section~\ref{se:proof_main_re}, based on the tube formula, 
we provide the proofs of the main results 
(Theorem~\ref{thm:estim-conefree} and Theorem~\ref{thm:estim-conedep}), 
which provide the desired average-case analyses of the Grassmann condition number. 

The goal of the last part of the paper is to provide the proofs 
that have been left open in Section~\ref{se:tube-form}. 
For this, a good understanding of the metric properties of Grassmann manifolds, 
is required:  Section~\ref{sec:prelim-Riemgeom} presents the necessary background. 
Finally, the remaining proofs are provided, the most delicate being the one of 
Theorem~\ref{thm:norm-Jac} that expresses the normal Jacobian of the parameterization map 
of the tube around $\SG_m(C)$ in terms of twisted characteristic polynomials 
of the Weingarten maps of the boundary of the cone~$C$. 

\medskip

\noindent{\bf Acknowledgments.} 
We are grateful to one of the anonymous referees for criticism that has led to a substantial improvement of the paper's presentation.

\section{Differential geometric preliminaries}\label{se:prelim}

\subsection{Coarea formula}

We define the \emph{normal determinant} of a surjective linear map
$A\colon V\to W$ between Euclidean vector spaces $V$ and $W$ by 
\[ 
\ndet(A) := |\det(A|_{\ker(A)^\bot})| \; , 
\]
where $A|_{\ker(A)^\bot}\colon \ker(A)^\bot\to W$ denotes the restriction of $A$ to the orthogonal complement of the kernel of $A$. 
Obviously, if $A$ is bijective, then $\ndet(A)=|\det(A)|$. 
Thus the normal determinant provides a natural generalization of the absolute value of the determinant.

The {\em smooth coarea formula}, stated below, is our main tool for volume computations.

\begin{thm}
Let $\vp\colon \M_1\to \M_2$ be a smooth surjective map between Riemannian 
manifolds~$\M_1,\M_2$. Then for any $f\colon \M_1\to\IR$ that is integrable 
with respect to~$d\M_1$, we have
\begin{align}
   \underset{\M_1}{\int} f\,d\M_1 & = \underset{q\in \M_2}{\int}\;\underset{p\in \vp^{-1}(q)}{\int} 
                \frac{f}{\ndet(D_p\vp)} \,d\vp^{-1}(q)\,d\M_2 \; . 
                \label{eq:cor-coarea-Riem-1} \\
\intertext{If additionally $\dim \M_1=\dim \M_2$, then}
   \vol \M_2 := \underset{\M_2}{\int} \,d\M_2 & \leq \underset{q\in \M_2}{\int} \#\vp^{-1}(q) \,d\M_2 = 
                \underset{p\in \M_1}{\int} \ndet(D_p\vp) \,d\M_1 \; , 
                \label{eq:cor-coarea-Riem-2}
\end{align}
where $\#\vp^{-1}(q)$ denotes the number of elements in the fiber $\vp^{-1}(q)$. \hfill $\Box$
\end{thm}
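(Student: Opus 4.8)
The plan is to reduce this (the Riemannian coarea formula) to Federer's classical coarea formula for Lipschitz maps between Euclidean spaces, and then to read off~\eqref{eq:cor-coarea-Riem-2} as a formal consequence of~\eqref{eq:cor-coarea-Riem-1}. As a preliminary observation, a smooth map whose image is all of $\M_2$ forces $n_1\geq n_2$, where $n_1:=\dim\M_1$ and $n_2:=\dim\M_2$, since otherwise the image would be $n_2$-null. Splitting $f$ into positive and negative parts we may assume $f\geq0$. Choosing a countable atlas of $\M_1$ by charts $\kappa_a\colon U_a\to\IR^{n_1}$ fine enough that each $\vp(U_a)$ lies in a chart $\lambda_a\colon V_a\to\IR^{n_2}$ of $\M_2$, and a smooth partition of unity $(\chi_a)$ subordinate to $(U_a)$, it suffices by monotone convergence to prove~\eqref{eq:cor-coarea-Riem-1} with $f$ replaced by each $\chi_a f$; that is, we may assume $f$ is supported in a single chart. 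There all three integrations in~\eqref{eq:cor-coarea-Riem-1} become integrations over open subsets of Euclidean spaces: against $\sqrt{\det G_1}\,dx$ on $U_a$, against $\sqrt{\det G_2}\,dy$ on $V_a$, and against the induced Riemannian volume on the level sets of $\bar\vp:=\lambda_a\circ\vp\circ\kappa_a^{-1}$, where $G_i$ is the Gram matrix of the metric of $\M_i$ in the chosen coordinates.

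The key step is then a piece of linear algebra isolating how $\ndet$ responds to a change of inner products. If $A\colon V\to W$ is surjective with matrix $J$ in fixed bases, and the inner products on $V$ and $W$ have Gram matrices $G_1$ and $G_2$ in those bases, then writing out the adjoint gives $A^{*}=G_1^{-1}J^{T}G_2$, hence $\ndet(A)^{2}=\det(AA^{*})=\det\!\bigl(J\,G_1^{-1}J^{T}\bigr)\,\det G_2$. Applied pointwise to $A=D_p\vp$, this shows that $\ndet(D_p\vp)$ is exactly the Euclidean coarea factor of $\bar\vp$ recomputed with the weight $G_1^{-1}$ in the domain, multiplied by $\sqrt{\det G_2}$; and these are precisely the weights that turn the Euclidean Hausdorff measure on the fibres and the Lebesgue measures on $U_a,V_a$ into the Riemannian fibre volume and the densities $\sqrt{\det G_i}\,dx$. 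Substituting all of this into Federer's coarea formula for the locally Lipschitz map $\bar\vp$ — applied to the function obtained by absorbing the density weights into $f/\ndet(D\vp)$, and using Sard's theorem so that the critical values of $\vp$ form a $d\M_2$-null set and every regular fibre $\vp^{-1}(q)$ is a smooth $(n_1-n_2)$-dimensional submanifold — yields~\eqref{eq:cor-coarea-Riem-1} in the chart, hence in general. On the critical set $\ndet(D_p\vp)=0$, so the identity is to be understood via $\int f=\int (f/\ndet)\cdot\ndet$, and this set contributes nothing to either side.

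Granting~\eqref{eq:cor-coarea-Riem-1}, the case $\dim\M_1=\dim\M_2$ is immediate: over a regular value the fibre $\vp^{-1}(q)$ is discrete, so the inner integral is the sum $\sum_{p\in\vp^{-1}(q)}f(p)/\ndet(D_p\vp)$; specializing to $f=\ndet(D_p\vp)$ gives $\int_{\M_1}\ndet(D_p\vp)\,d\M_1=\int_{\M_2}\#\vp^{-1}(q)\,d\M_2$, with both sides allowed to be $+\infty$. Since $\vp$ is surjective, $\#\vp^{-1}(q)\geq1$ for every $q$, hence $\vol\M_2=\int_{\M_2}d\M_2\leq\int_{\M_2}\#\vp^{-1}(q)\,d\M_2$, which is~\eqref{eq:cor-coarea-Riem-2}. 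The only genuinely delicate point in the argument is the bookkeeping in the middle paragraph: one must check that the several metric-density Jacobians appearing in the domain, the codomain, and the fibres combine precisely into the single factor $\ndet(D_p\vp)$ demanded by~\eqref{eq:cor-coarea-Riem-1}; the remaining ingredients — reduction via charts, Federer's Euclidean formula, and Sard's theorem — are standard.
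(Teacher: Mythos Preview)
The paper does not actually prove this theorem: the statement ends with a $\Box$ and the authors simply refer the reader to \cite[3.8]{Mor:95}, \cite[3.2.11]{Fed}, and \cite[Appendix]{H:93} for proofs. Your proposal therefore goes well beyond what the paper does. The outline you give---localize via charts and a partition of unity, invoke Federer's Euclidean coarea formula for the coordinate representative $\bar\vp$, and track how the Gram matrices $G_1,G_2$ convert the Euclidean coarea factor into the Riemannian $\ndet(D_p\vp)$---is the standard route and is essentially how the cited references proceed (Howard's appendix in particular). Your linear-algebra identity $\ndet(A)^2=\det(J\,G_1^{-1}J^T)\det G_2$ is correct, and your derivation of~\eqref{eq:cor-coarea-Riem-2} from~\eqref{eq:cor-coarea-Riem-1} by specializing $f=\ndet(D_p\vp)$ is exactly right. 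The one place where your sketch is genuinely incomplete is, as you acknowledge, the bookkeeping that the Riemannian fibre volume on $\vp^{-1}(q)$ matches the Euclidean Hausdorff measure on $\bar\vp^{-1}(\lambda_a(q))$ weighted by the appropriate density; this requires a short computation with the induced metric on the fibre, but it is routine.
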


See~\cite[3.8]{Mor:95} or~\cite[3.2.11]{Fed} for proofs of the coarea formula where $\M_1,\M_2$ 
are submanifolds of Euclidean space. 
For a proof of the coarea formula in the above stated form, see~\cite[Appendix]{H:93}. 
One calls $\ndet(D_p\vp)$ the \emph{normal Jacobian} of~$\vp$ at~$p$.

\begin{remark}
The inner integral in~\eqref{eq:cor-coarea-Riem-1} over the fiber
$\vp^{-1}(q)$ is well-defined for almost all $q\in \M_2$, 
which can be seen as follows: 
Sard's lemma (cf.~\cite[Thm.~3-14]{spiv:65}) implies that almost all
$q\in \M_2$ are regular values, 
i.e., the derivative $D_p\vp$ has full rank for all
$p\in\vp^{-1}(q)$. The fibers $\vp^{-1}(q)$ of regular values $q$ are
smooth submanifolds of $\M_1$ 
and therefore the integral over $\vp^{-1}(q)$ is well-defined.
\end{remark}

\subsection{Exponential maps and tubes}\label{se:exp-map-tubes}

Suppose we are in the situation of Section~\ref{se:basic-outline}.  
Thus let $\M$ be a compact connected Riemannian manifold and 
$\mcH\subseteq\M$ be a compact hypersurface with unit normal vector field~$\nu$.
Recall the map $\psi\colon\mcH\times\R\to \M$ from \eqref{eq:psi-par}, 
which satisfies $\mcT(\mcH,\alpha) = \psi(\M\times [-\alpha,\alpha])$
according to~\eqref{eq:tube-char-exp}. 

We additionally assume that there are 
closed subsets $\PG,\DG\subseteq\M$ such that 
$\M=\PG\cup\DG$ and $\mcH=\PG\cap\DG$. 
Moreover, we assume that 
there exists $\epsilon>0$ such that 
$\psi(\mcH\times [0,\epsilon]) \subseteq \PG$ and 
$\psi(\mcH\times [-\epsilon,0]) \subseteq \DG$.
(We express this property by saying that 
{\em $\nu$ points into $\PG$} and 
{\em $-\nu$ points into $\DG$}.) 
In Proposition~\ref{pro:Sigma} we will show that 
$\PG_m(C)$ and $\DG_m(C)$ satisfy these assumptions. 

\begin{lemma}\label{le:PD-tubes}
We have 
$$
\mcT(\mcH,\alpha) \cap \PG \subseteq \psi(\mcH\times [0,\alpha])\quad
\mbox{ and }\quad 
\mcT(\mcH,\alpha) \cap \DG \subseteq \psi(\mcH\times [-\alpha,0])
$$ 
for all $\alpha\ge 0$.
\end{lemma}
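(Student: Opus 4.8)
The plan is to prove the first inclusion $\mcT(\mcH,\alpha)\cap\PG\subseteq\psi(\mcH\times[0,\alpha])$; the second then follows by the symmetric argument with the roles of $\PG,\DG$ and of $\nu,-\nu$ interchanged. Let $p\in\mcT(\mcH,\alpha)\cap\PG$. By \eqref{eq:tube-char-exp} we may write $p=\psi(q,\theta)$ with $q\in\mcH$ and $\theta\in[-\alpha,\alpha]$, and among all such representations we choose one minimizing $|\theta|$; note that $|\theta|=d(p,\mcH)$ for this choice, since $\psi(q,\cdot)$ traces a unit-speed geodesic emanating normally from $\mcH$, and any shorter connection to $\mcH$ would contradict minimality (this is the standard fact underlying \eqref{eq:tube-char-exp}, proved in~\cite[Addendum to Chap.~9]{spiv1}). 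If $\theta\ge 0$ we are done, so assume for contradiction that $\theta<0$.

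The idea is then to use the hypothesis that $-\nu$ points into $\DG$ together with $\M=\PG\cup\DG$ and $\mcH=\PG\cap\DG$ to force $p$ onto the wrong side. Consider the geodesic segment $c(s):=\psi(q,s)$ for $s\in[\theta,0]$, running from $p=c(\theta)$ to $c(0)=q\in\mcH$. For small $s<0$ we have $c(s)\in\DG$ by the "$-\nu$ points into $\DG$" assumption. I want to conclude that the whole segment $c([\theta,0))$ lies in $\DG\setminus\mcH$: indeed, if $c(s_0)\in\mcH$ for some $s_0\in(\theta,0)$, then $d(p,\mcH)\le d(p,c(s_0))=|\theta-s_0|<|\theta|$, contradicting $|\theta|=d(p,\mcH)$; so $c([\theta,0))\cap\mcH=\emptyset$, and since $\DG$ is closed while the segment starts inside $\DG$ (near $s=0^-$) and never meets the common boundary $\mcH=\PG\cap\DG$, a connectedness argument on the interval gives $c([\theta,0))\subseteq\DG\setminus\mcH$. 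In particular $p=c(\theta)\in\DG\setminus\mcH$, so $p\notin\PG$ (because $\PG\cap\DG=\mcH$), contradicting $p\in\PG$. Hence $\theta\ge 0$ and $p\in\psi(\mcH\times[0,\alpha])$.

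The step I expect to require the most care is the connectedness argument showing $c([\theta,0))\subseteq\DG$: one must rule out the segment leaving $\DG$ without crossing $\mcH$. This uses $\M=\PG\cup\DG$ (so outside $\DG$ the segment would be in $\PG$) together with $\PG\cap\DG=\mcH$: formally, the set $\{s\in[\theta,0): c(s)\in\DG\}$ is closed in $[\theta,0)$ (as $\DG$ is closed), it is nonempty and contains a right-neighborhood of $0$ by the $\epsilon$-hypothesis on $-\nu$, and its complement $\{s: c(s)\in\PG\setminus\mcH\}$ is open; since $c([\theta,0))$ avoids $\mcH$, these two sets partition $[\theta,0)$ into an open and a (relatively) clopen piece, forcing the $\DG$-piece to be all of $[\theta,0)$. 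Everything else is a routine application of \eqref{eq:tube-char-exp} and the elementary geometry of normal geodesics. $\Box$
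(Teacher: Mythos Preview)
Your proof is correct and follows essentially the same approach as the paper's: represent the point by a normal geodesic of minimal length, assume by contradiction that it lies on the ``wrong'' side ($\theta<0$), and use the $\epsilon$-hypothesis together with $\mcH=\PG\cap\DG$ to reach a contradiction. The only cosmetic difference is in how the contradiction is closed: the paper argues that the geodesic, starting in $\DG$ near $\mcH$ and ending in $\PG$, must hit $\mcH$ at some intermediate parameter $\alpha_0>0$, yielding a strictly shorter connection; you argue the contrapositive, showing the geodesic cannot hit $\mcH$ (that would shorten the distance), hence stays in $\DG$ by connectedness, forcing the endpoint into $\DG\setminus\mcH$. Your connectedness argument is spelled out more carefully than the paper's; the one slip is that ``right-neighborhood of $0$'' should read ``left-neighborhood of $0$'' (i.e., $s\in(-\epsilon,0)$).
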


\begin{proof}
By symmetry, it suffices to prove the first inclusion. 
Let $q\in\PG$ with $\alpha :=d(q,\mcH)>0$. 
From~\eqref{eq:tube-char-exp} 
we get that a minimum length geodesic
between some $p\in\mcH$ and~$q$ 
has the form 
$$
 \gamma\colon [0,\alpha]\to\M,\, \rho\mapsto \exp_p(\rho\,\delta\,\nu(p)) \; ,
$$
where $\delta\in\{1,-1\}$. 
It is sufficient to prove that $\delta=1$. 
By way of contradiction, assume that $\delta=-1$. 
Then $q=\gamma(\alpha)=\exp_p(-\alpha\nu(p))\in\PG$. 
Since by our assumption, we have 
$\gamma(\rho)\in\DG$ for sufficiently small $0<\rho<\alpha$, 
there exists $0 \le \alpha_0<\alpha$ 
such that $\gamma(\alpha_0)\in\PG\cap \DG=\mcH$. 
Therefore, $d(q,\mcH)\leq d(\gamma(\alpha),\gamma(\alpha_0))\leq \alpha-\alpha_0<\alpha$, which is a contradiction.
\end{proof}

\subsection{Weingarten maps}

We recall a further basic notion from differential geometry (cf.~\cite[Ch.~9]{thor:94}).
Let $M$ be a (smooth) hypersurface of $S^{n-1}$ with a unit normal vector field~$\nu$. 
The {\em Weingarten map} $\W_p$ at $p\in M$ is defined as 
\begin{equation}\label{eq:weing-hypersurf}
  \W_p\colon T_pM\to T_pM ,\quad \W_p(\zeta) = -D_p\nu(\zeta) \; ,
\end{equation}
where $D_p\nu$ denotes the derivative of $\nu\colon M\to S^{n-1}$ at $p$.
It can be shown that the Weingarten map is self-adjoint; we denote its eigenvalues by 
$\kappa_1(p),\ldots,\kappa_{n-2}(p)$. These are called the 
\emph{principal curvatures of $M$ at $p$}. Furthermore, we denote by 
$\sigma_k(p)$ the $k$th elementary symmetric function 
in the eigenvalues of~$\W_p$. We call the product 
$\sigma_{n-2}(p) := \kappa_1(p)\cdots\kappa_{n-2}(p)$ the {\em Gaussian curvature} of $M$ at $p$.

\subsection{Volume of Grassmann manifold}

Let $O(n):=\{Q\in\IR^{n\times n} \mid QQ^T=I_n\}$ denote the orthogonal group. 
Recall that $\mcO_i$ denotes the volume of $S^{i-1}$, 
cf.~\eqref{eq:def-O_k}. The following result is well-known. 

\begin{lemma}\label{le:volGr}
We have 
$$
\vol O(n) = \prod_{i=0}^{n-1} \mcO_i \quad\mbox{ and }\quad 
\vol \Gr_{n,m} = \frac{\vol O(n)}{\vol O(m)\cdot \vol O(n-m)}  \;.
$$
\end{lemma}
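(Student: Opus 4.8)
The plan is to establish both formulas by the classical fibration argument for homogeneous spaces, using the fact that $\Gr_{n,m}$ is an orbit of $O(n)$ and that its volume can be measured by pushing forward Haar measure on $O(n)$ via the stabilizer fibration. I will carry this out in two steps: first compute $\vol O(n)$ inductively via the standard fibration $O(n-1)\hookrightarrow O(n)\to S^{n-1}$, then apply the coarea formula to the quotient map $O(n)\to\Gr_{n,m}$ whose fibers are cosets of $O(m)\times O(n-m)$.

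\textbf{Computing $\vol O(n)$.} I would fix the bi-invariant Riemannian metric on $O(n)$ induced from the ambient inner product $\langle X,Y\rangle=\tfrac12\tr(X^TY)$ on $\IR^{n\times n}$ (restricted to the tangent spaces, which are translates of the skew-symmetric matrices $\Skew(n)$), normalized so that the last-column map $\pi\colon O(n)\to S^{n-1}$, $Q\mapsto Qe_n$, is a Riemannian submersion onto the unit sphere with its standard metric. The fiber $\pi^{-1}(e_n)$ is exactly the copy of $O(n-1)$ fixing $e_n$, and by bi-invariance all fibers have the same volume $\vol O(n-1)$. Since $\pi$ is a submersion, its normal Jacobian is identically $1$, so the coarea formula \eqref{eq:cor-coarea-Riem-1} with $f\equiv 1$ gives $\vol O(n)=\vol O(n-1)\cdot\vol_{n-1}(S^{n-1})=\mcO_{n-1}\cdot\vol O(n-1)$. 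With the base case $\vol O(1)=\#\{\pm1\}=2=\mcO_0$, induction yields $\vol O(n)=\prod_{i=0}^{n-1}\mcO_i$.

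\textbf{Computing $\vol\Gr_{n,m}$.} Consider the surjective smooth map $q\colon O(n)\to\Gr_{n,m}$, $Q\mapsto Q\cdot W_0$ where $W_0=\lin(e_1,\dots,e_m)$. Its fiber over $W_0$ is the subgroup $H:=O(m)\times O(n-m)$ of block-diagonal orthogonal matrices (those preserving both $W_0$ and $W_0^\bot$), and again by bi-invariance every fiber is isometric to $H$, with $\vol H=\vol O(m)\cdot\vol O(n-m)$. The metric on $\Gr_{n,m}$ is, up to scale, the unique $O(n)$-invariant one; I would choose precisely the scaling that makes $q$ a Riemannian submersion (equivalently, the one for which the horizontal subspace of $T_I O(n)=\Skew(n)$, namely the off-diagonal block, is mapped isometrically to $T_{W_0}\Gr_{n,m}$). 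With this choice the normal Jacobian of $q$ is $1$, so the coarea formula gives $\vol\Gr_{n,m}=\vol O(n)/\big(\vol O(m)\cdot\vol O(n-m)\big)$, as claimed. (If one prefers the normalization $\rvol\Gr_{n,m}=1$ used elsewhere in the paper, this is just division by the right-hand side; the lemma states the unnormalized identity, which is what the scaling above delivers.)

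\textbf{Main obstacle.} The only genuinely delicate point is the bookkeeping of metric normalizations: one must verify that the $O(n)$-invariant metric on $\Gr_{n,m}$ can be scaled so that $q$ is a Riemannian submersion and simultaneously that the same ambient metric on $\Skew(n)$ makes the earlier map $\pi$ a submersion onto the standard sphere — i.e., that the two normalizations are compatible and agree with the normalization of $\vol O(m),\vol O(n-m)$ appearing in the denominator. Once one fixes the inner product on $\Skew(n)$ once and for all and checks that the induced sphere metric is the standard one (a short computation with the tangent vectors $e_i\wedge e_n$), everything else is the routine submersion-plus-coarea argument, and the invariance of $\vol$ under left translation (itself immediate from $O(n)$ acting by isometries) takes care of the claim that all fibers have equal volume.
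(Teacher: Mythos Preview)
Your proposal is correct and follows essentially the same approach as the paper: both use the coarea formula for the Riemannian submersion $O(n)\to S^{n-1}$ (you take $Q\mapsto Qe_n$, the paper takes $Q\mapsto Qe_1$, an immaterial difference) to get the recursion $\vol O(n)=\mcO_{n-1}\vol O(n-1)$, and then apply the same submersion argument to $\Pi\colon O(n)\to\Gr_{n,m}$ with fibers isometric to $O(m)\times O(n-m)$. Your discussion of the metric-normalization compatibility is more explicit than the paper's, which simply asserts the submersion property; this is a fair point to flag but not a divergence in method.
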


\begin{proof}
The map $\vp\colon O(n)\to S^{n-1}$, $Q\mapsto Q e_1$, 
where $e_1\in\IR^n$ denotes the first canonical basis vector, is a Riemannian 
submersion. In particular, we have $\ndet(D_Q\vp)=1$ for all $Q\in O(n)$.
Moreover, each fiber of~$\vp$ is isometric to $O(n-1)$. So an application 
of the coarea formula~\eqref{eq:cor-coarea-Riem-1} shows that 
$\vol O(n) = \mcO_{n-1}\cdot \vol O(n-1)$ and 
the formula for $\vol O(n)$ follows by induction.
 
For $\Gr_{n,m}$, it suffices to note that we have a Riemannian 
submersion $\Pi\colon O(n)\to\Gr_{n,m}$, whose fibers are isometric to the 
direct product $O(m)\times O(n-m)$. 
\end{proof}

For later use, we record the following 
\begin{align}\label{eq:vol(Gr_(n-2,m-1))/vol(Gr_(n,m))=...}
  \frac{\vol\Gr_{n-2,m-1}}{\vol\Gr_{n,m}} & = 
  \frac{\prod_{i=n-m-1}^{n-3}\mcO_i}{\prod_{i=0}^{m-2}\mcO_i}\cdot 
  \frac{\prod_{i=0}^{m-1}\mcO_i}{\prod_{i=n-m}^{n-1}\mcO_i} = 
  \frac{\mcO_{m-1}\cdot \mcO_{n-m-1}}{\mcO_{n-2}\cdot \mcO_{n-1}} 
  \stackrel{\eqref{eq:def-O_k}}{=} \frac{m(n-m)}{n}\cdot 
  \frac{\omega_m\cdot \omega_{n-m}}{\mcO_{n-2}\cdot \omega_n} \nonumber
\\ & \hspace{-1mm}\stackrel{\eqref{eq:binom(n/2,m/2)=...}}{=} 
  \frac{m(n-m)}{n}\cdot \binom{n/2}{m/2}\cdot \frac{1}{\mcO_{n-2}} \; .
\end{align}

\section{Background from spherical convex geometry}\label{sec:spher-conv-geom}

This section extends the background from spherical convex geometry given in Section~\ref{se:intro-intrinsic} with special emphasis on spherically convex sets with smooth boundary.

\subsection{The metric space of spherically convex sets}\label{sec:some-general}

Recall that a subset $K\subseteq S^{n-1}$ is called (spherically) convex iff 
$C:=\cone(K)$ is a convex cone. We have $K=C\cap S^{n-1}$. 
We denote the family of closed spherically convex sets by $\mcK(S^{n-1})$. 
The {\em duality map}, which sends a closed convex cone $C$ to its polar cone~$\breve{C}$,  
naturally defines an involution on $\mcK(S^{n-1})$: 
it maps $K\in\mcK(S^{n-1})$ to 
$\breve{K}:=\breve{C}\cap S^{n-1}$. 
It is easy to check  that 
$\breve{K}=\{p\in S^{n-1}\mid d(p,K)\geq\frac{\pi}{2}\}$, 
where $d(p,q)$ denotes the (spherical) distance between $p,q\in S^{n-1}$. 

It is easily seen that a closed convex cone is regular if and only if both $C$ and $\breve{C}$ have nonempty interior.
We say that $K\in\mcK(S^{n-1})$ is regular if $\cone(K)$ is regular and denote by 
$\mcK^r(S^{n-1})$ the set of regular $K\in\mcK(S^{n-1})$.

The set $\mcK(S^{n-1})$ is a compact metric space with respect to the 
{\em Hausdorff metric}~$d_\hd$, which for $K_1,K_2\in\mcK(S^{n-1})$ is defined by  
\[ d_\hd(K_1,K_2) := \max\big\{\min\{ \alpha\geq 0\mid K_2\subseteq 
     \mcT(K_1,\alpha)\} \,,\; \min\{ \beta\geq 0\mid K_1\subseteq 
     \mcT(K_2,\beta)\}\big\} \; ; 
\]
cf.~\cite[\S1.2]{M:06}.  
A spherically convex set $K\in\mcK(S^{n-1})$ is called \emph{polyhedral} 
if $\cone(K)$ is the intersection of finitely many closed half-spaces 
(containing the origin).
One can show that the set $\mcK^p(S^{n-1})$ of polyhedral convex sets in $S^{n-1}$ is dense in $\mcK(S^{n-1})$ 
with respect to the Hausdorff metric. 
This is seen by an easy adaption 
(cf.~\cite[Hilfssatz~2.5]{Gl} or~\cite[Prop.~3.3.4]{am:thesis})
of the proof for the corresponding Euclidean statement in~\cite[\S2.4]{Schn:book}.

\begin{remark}\label{rem:convexity-tubes}
One important difference between Euclidean and spherical 
convex geometry are the convexity properties of tubes. In the Euclidean case, 
the tubes around a convex set are again convex. In the spherical case this is rarely true.
Suppose that for $K\in\mcK(S^{n-1})$ the cone $C$ has a supporting 
hyperplane~$H\subseteq\IR^n$, $H\cap\inter(C)=\emptyset$, 
such that the face $H\cap C$ has dimension at least two.
Then one can show that the tube $\mcT(K,\alpha)$ is not convex, 
unless $\alpha=0$ or $\mcT(K,\alpha)=S^{n-1}$. 
This implies that for $n\ge 3$ and $k\ge 3$,
the cones $\IR_+^n$ and  $\Sym_+^k$ do not have convex tubes, respectively. 
\end{remark}

\subsection{Smooth convex sets}

We call $K\in\mcK(S^{n-1})$ {\em smooth} if $K$ is regular and 
its boundary $M=\partial K$ is a smooth hypersurface of $S^{n-1}$ 
with nowhere vanishing Gaussian curvature. 
Let $\mcK^\sm(S^{n-1})$ denote the family of smooth $K\in\mcK(S^{n-1})$. 
If we chose for $\nu(p)$ the unit vector in $T_pS^{n-1}$ 
normal to $T_pM$ and pointing inwards~$K$, 
then all principal curvatures of $M$ at~$p$ are positive, 
i.e., $\W_p$ is positive definite. 

\begin{lemma}\label{le:dense}
$\mcK^\sm(S^{n-1})$ is dense in $\mcK^r(S^{n-1})$.
\end{lemma}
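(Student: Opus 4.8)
The plan is to approximate a given regular spherically convex set $K\in\mcK^r(S^{n-1})$ from within by smooth ones, using a two-step regularization. First I would reduce to the case where $K$ has nonempty interior relative to a full-dimensional cone and is, in fact, contained in an open hemisphere, so that one may pass to the Euclidean picture: intersecting the cone $C=\cone(K)$ with a suitable affine hyperplane transversal to its interior yields a full-dimensional compact convex body $B$ in $\IR^{n-1}$, and the spherical-convexity data of $K$ corresponds to ordinary convexity data of $B$. Convergence in the Hausdorff metric $d_\hd$ on $\mcK(S^{n-1})$ translates, under this correspondence, into ordinary Hausdorff convergence of convex bodies (on the relevant compact region), so it suffices to approximate $B$ by convex bodies with smooth boundary of everywhere positive Gaussian curvature and then transport the approximants back to the sphere via $\cone(\cdot)\cap S^{n-1}$.

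The core Euclidean fact is classical: every compact convex body $B\subseteq\IR^{n-1}$ with nonempty interior can be approximated in the Hausdorff metric by convex bodies whose boundary is a $C^\infty$ hypersurface with strictly positive Gaussian curvature. One standard construction is to first shrink slightly, $B_\delta := (1-\delta)B + \delta \bar B(x_0,r)$ for an interior ball, which is still convex and (by Minkowski summation with a ball) already has a boundary that is at least $C^{1,1}$; then mollify the support function $h_{B_\delta}$ by convolving on the sphere with a smooth approximate identity to obtain $h_\varepsilon$, which is the support function of a convex body since the class of support functions is preserved under such averaging, and whose smoothness plus the strict convexity inherited from the added ball forces the boundary to be $C^\infty$ with positive curvature. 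Letting $\delta,\varepsilon\to 0$ gives $B_{\delta,\varepsilon}\to B$ in Hausdorff distance. I would cite \cite[\S2.4, \S3.3]{Schn:book} for this, exactly as the paper already does for the polyhedral density statement preceding the lemma.

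Transporting back: given $B_{\delta,\varepsilon}\to B$, the cones $C_{\delta,\varepsilon}:=\cone(B_{\delta,\varepsilon})$ converge to $C$ (the cone and hyperplane-section operations are continuous on bodies staying in the transversal region), hence $K_{\delta,\varepsilon}:=C_{\delta,\varepsilon}\cap S^{n-1}\to K$ in $d_\hd$ by the definition of the Hausdorff metric via tubes. Each $K_{\delta,\varepsilon}$ is regular (it is full-dimensional and its polar is too, since smoothness and strict convexity of the primal body correspond to the polar being full-dimensional) and has smooth boundary with nowhere vanishing Gaussian curvature, because the Weingarten map of the spherical boundary hypersurface is, up to a positive conformal factor coming from the radial projection $S^{n-1}\leftrightarrow$ hyperplane, conjugate to the Euclidean second fundamental form of $\partial B_{\delta,\varepsilon}$; I would record this curvature-comparison computation as a short lemma rather than expanding it here. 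The main obstacle is precisely this last point — verifying carefully that positive Euclidean Gaussian curvature of the sectioning body implies nowhere-vanishing spherical Gaussian curvature of $\partial K$, i.e., controlling how the radial projection distorts principal curvatures — together with the bookkeeping needed to handle $K$ that are not a priori inside a hemisphere (which one arranges by first rotating and, if necessary, shrinking $K$ towards an interior point, at the cost of an arbitrarily small $d_\hd$-perturbation that is absorbed into the limit).
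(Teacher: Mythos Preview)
Your approach matches the paper's: the paper does not give a self-contained proof but simply invokes the Euclidean analogue (due to Minkowski, cf.~\cite[\S6]{bofe:74}) and refers to \cite[Prop.~4.1.10]{am:thesis} for the transfer to the sphere, which is precisely the hyperplane-section / radial-projection argument you outline. One small simplification: since $C=\cone(K)$ is regular it is pointed, so any interior point of $\breve{C}$ furnishes a hyperplane with $K$ strictly on one side---hence $K$ automatically lies in an open hemisphere and your final ``bookkeeping'' paragraph is unnecessary.
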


An Euclidean analogue of this result was first shown by Minkowski (cf.~\cite[\S6]{bofe:74}).
Using this result, it is not hard to derive the above lemma; 
we refer to~\cite[Prop.~4.1.10]{am:thesis} for a proof.

\subsection{Intrinsic volumes}\label{se:intr-vol} 

In Section~\ref{se:intro-intrinsic}, we defined the intrinsic volumes $V_0(C),\ldots,V_n(C)$ 
of a polyhedral convex cone $C\subseteq\IR^n$. 
This allows to define the intrinsic volumes $V_j(K) := V_{j+1}(\cone(K))$ for $K\in\mcK^p(S^{n-1})$, $-1\leq j\leq n-1$. 
Recall from Section~\ref{sec:some-general} that $\mcK^p(S^{n-1})$ is dense in $\mcK(S^{n-1})$. 
It is a well-known fact that the functions $V_j\colon\mcK^p(S^{n-1}) \to \R$ have a unique continuous 
extension to $\mcK(S^{n-1})$ with respect to the Hausdorff metric $d_H$, cf.~\cite[Sec.~6.5]{SW:08}.

The following well-known facts~\cite[Sec.~6.5]{SW:08} about the intrinsic volumes are easily verified for polyhedral cones 
and therefore, by continuity, hold for any closed convex cone.  

\begin{proposition}\label{prop:facts-intrvol} 
Let $C\subseteq\IR^n$ be a closed convex cone.
\begin{enumerate}
\item The intrinsic volumes $V_0(C),\ldots,V_n(C)$ form a probability distribution: 
 $V_j(C)\ge 0$ and $\sum_{j=0}^n V_j(C) = 1$.
\item $V_j(QC) = V_j(C)$ for $Q\in O(n)$ (orthogonal invariance). 
\item We have $V_j(C) = V_{n-j}(\breve{C})$.
\end{enumerate}
\end{proposition}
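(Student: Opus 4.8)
The plan is to establish all three assertions first for polyhedral $C$, directly from the defining formula \eqref{eq:V_j(C)-polyhdrl}, and then to transfer them to arbitrary closed convex cones using the density of $\mcK^p(S^{n-1})$ in $\mcK(S^{n-1})$ together with the continuity of the extended functions $V_j$, both of which have been recalled above. The transfer will be automatic, since each of the three properties is stable under Hausdorff limits: nonnegativity of a continuous function and the normalization $\sum_{j=0}^n V_j\equiv 1$ (a finite sum of continuous functions) pass to the limit trivially; orthogonal invariance passes because $K\mapsto QK$ is a $d_\hd$-isometry; and the duality relation passes because $K\mapsto\breve K$ is $d_\hd$-continuous.

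For polyhedral $C$, I would argue (i) as follows: each summand in \eqref{eq:V_j(C)-polyhdrl} is a probability, hence nonnegative; and since $C=\bigcup_{j=0}^n\bigcup_{F\in{\cal F}_j}F$ is the disjoint union of the relative interiors of all of its faces, the events $\{\Pi_C(x)\in F\}$, as $F$ ranges over all $F\in{\cal F}_0\cup\cdots\cup{\cal F}_n$, partition the sure event $\{\Pi_C(x)\in C\}$, so summing \eqref{eq:V_j(C)-polyhdrl} over $j$ gives $\sum_{j=0}^n V_j(C)=1$. For (ii), I would use that metric projection is $O(n)$-equivariant, $\Pi_{QC}(x)=Q\,\Pi_C(Q^Tx)$, that $F\mapsto QF$ is a dimension-preserving bijection from ${\cal F}_j$ of $C$ onto ${\cal F}_j$ of $QC$, and that $Q^Tx$ is again standard Gaussian whenever $x$ is; together these give $\Prob\{\Pi_{QC}(x)\in QF\}=\Prob\{\Pi_C(x)\in F\}$ for every $j$-face $F$, and summing yields $V_j(QC)=V_j(C)$.

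The real content is (iii), and for this the main tool is the Moreau decomposition: every $x\in\IR^n$ decomposes uniquely as $x=\Pi_C(x)+\Pi_{\breve C}(x)$ with $\Pi_C(x)\perp\Pi_{\breve C}(x)$. For polyhedral $C$ there is a dimension-reversing bijection $F\mapsto\hat F:=\breve C\cap\lin(F)^\bot$ between the faces of $C$ and those of $\breve C$, with $\dim\hat F=n-\dim F$; concretely, $\hat F$ is the normal cone of $C$ along $\operatorname{relint}(F)$. The key point I would establish is that for almost every $x\in\IR^n$ one has $\Pi_C(x)\in\operatorname{relint}(F)$ if and only if $\Pi_{\breve C}(x)\in\operatorname{relint}(\hat F)$. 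Granting this, $\Prob\{\Pi_C(x)\in F\}=\Prob\{\Pi_{\breve C}(x)\in\hat F\}$ for each $j$-face $F$ of $C$, and summing over ${\cal F}_j$ of $C$, which maps bijectively onto ${\cal F}_{n-j}$ of $\breve C$ under $F\mapsto\hat F$, gives $V_j(C)=V_{n-j}(\breve C)$.

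I expect this last equivalence to be the main obstacle: one has to show that the unique face of $C$ whose relative interior contains $\Pi_C(x)$ and the unique face of $\breve C$ whose relative interior contains $\Pi_{\breve C}(x)$ are conjugate under $F\mapsto\hat F$. This is a standard but somewhat fiddly polyhedral computation, relying on the variational characterization $\langle x-\Pi_C(x),\,y-\Pi_C(x)\rangle\le 0$ for all $y\in C$ and on the description of the normal cones of a polyhedron; the set of $x$ where the two faces fail to be conjugate is a finite union of proper subspaces, hence negligible.
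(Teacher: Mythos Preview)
Your proposal is correct and follows precisely the route the paper indicates: the paper does not give a detailed proof but simply states that these facts ``are easily verified for polyhedral cones and therefore, by continuity, hold for any closed convex cone,'' citing~\cite[Sec.~6.5]{SW:08}. Your arguments---the face-partition for (i), equivariance of the metric projection for (ii), and the Moreau decomposition together with the face--normal-cone bijection for (iii), followed by the density/continuity transfer---are exactly the standard verifications the paper has in mind.
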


For a regular cone $C$ one can slightly improve the bound $V_j(C) \le 1$. 

\begin{lemma}\label{le:bound-IV}
For any regular cone $C\subseteq\IR^n$ we have 
$V_j(C) \le \frac12$ for all $0\le j\le n$. 
\end{lemma}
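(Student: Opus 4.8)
The plan is to deduce the bound from two elementary facts about conic intrinsic volumes. The first is that they form a probability distribution: $V_j(C)\ge 0$ for all $j$ and $\sum_{j=0}^{n}V_j(C)=1$, by Proposition~\ref{prop:facts-intrvol}(1). The second is the \emph{conic Gauss--Bonnet relation},
\[
  \sum_{j=0}^{n}(-1)^{j}\,V_j(C)=0\qquad\text{whenever $C\subseteq\IR^n$ is not a linear subspace.}
\]
A regular cone has nonempty interior and contains no nontrivial linear subspace, hence is not a linear subspace, so this relation is available for it.

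Granting these two identities the lemma is immediate: adding them yields $\sum_{j\ \mathrm{even}}V_j(C)=\tfrac12$ and subtracting them yields $\sum_{j\ \mathrm{odd}}V_j(C)=\tfrac12$, and since every $V_j(C)$ is nonnegative it cannot exceed the sum over its own parity class, i.e.\ $V_j(C)\le\tfrac12$ for all $0\le j\le n$.

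Thus the only real work is the Gauss--Bonnet relation, and this is where I expect the difficulty to lie. For polyhedral $C$ with $\dim C\ge 1$ it is a classical identity --- a spherical Gram/Euler-type relation for the spherical polytope $C\cap S^{n-1}$, obtained by expanding each $V_j(C)$ into a sum over the $j$-faces of $C$ of products of an internal angle and an external (normal-cone) angle, and then collapsing the resulting alternating sum; see \cite[Sec.~6.5]{SW:08} or \cite{am:thesis}. One then passes to an arbitrary regular cone exactly as in Proposition~\ref{prop:facts-intrvol}, approximating it by polyhedral regular cones (which are dense) and invoking the continuity of the $V_j$ with respect to the Hausdorff metric; the approximants are eventually regular, hence not linear subspaces, so each satisfies the relation. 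Finally, it is worth recording that the constant $\tfrac12$ is sharp --- it is approached by a half-space for $V_n$ and attained exactly by every two-dimensional regular cone for $V_1$ --- so the parity-class splitting is genuinely necessary: no estimate that bounds a single $V_j(C)$ in isolation can reach $\tfrac12$. This also serves as a useful sanity check on the argument.
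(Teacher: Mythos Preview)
Your proof is correct and is essentially the paper's own argument. The paper cites directly the identity that the odd-indexed and even-indexed intrinsic volumes each sum to $\tfrac12\chi(C\cap S^{n-1})$ and then observes $\chi=1$ for a regular cone (since it lies in an open half-space); you instead derive that same parity splitting from $\sum_j V_j(C)=1$ together with the conic Gauss--Bonnet relation $\sum_j(-1)^jV_j(C)=0$, which is an equivalent route to the same conclusion.
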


\begin{proof}
It is known that 
\begin{equation*}\label{eq:Gauss-Bonnet}
  V_1(C)+V_3(C)+V_5(C) + \ldots = V_0(C)+V_2(C)+V_4(C) + \ldots = \tfrac{1}{2} \chi(C\cap S^{d-1}) ,
\end{equation*}
where $\chi$ denotes the \emph{Euler characteristic}, cf.~\cite[Sec.~4.3]{Gl} or~\cite[Thm.~6.5.5]{SW:08}. 
Moreover, $C$ is contained in an open halfspace since $C$ is regular. 
This implies $\chi(C\cap S^{d-1})=1$.  
\end{proof}

We next state a well-known formula for the spherical intrinsic volumes of smooth 
spherically convex sets, which directly follows from Weyl's tube formula~\cite{weyl:39}. 
(See~\cite[Ch.~4]{am:thesis} for the proof of a more general statement.)

\begin{proposition}\label{pro:V_j(K)--smooth}
Let $K\in \mcK^\sm(S^{n-1})$ and $0\leq j\leq n-2$. Then the intrinsic 
volumes of~$K$ are given by
\begin{equation*}
  V_j(K) = \frac{1}{\mcO_j\cdot \mcO_{n-2-j}}\,
                        \int_{p\in M} \sigma_{n-2-j}(p)\,dM \; ,
\end{equation*}
where $M:=\partial K$ denotes the boundary of $K$, and $\sigma_k(p)$ 
denotes the $k$th elementary symmetric function in the principal 
curvatures of~$M$. \hfill $\Box$
\end{proposition}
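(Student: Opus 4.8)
The plan is to compute $\vol_{n-1}\mcT(K,\alpha)$ in two different ways and to compare them. The first way is Weyl's tube formula: \eqref{eq:def-V_j} together with \eqref{eq:O_(n-1,k)(alpha)} gives $\vol_{n-1}\mcT(K,\alpha) = \vol_{n-1}(K) + \sum_{j=0}^{n-2} V_j(K)\,\mcO_j\mcO_{n-2-j}\,I_{n,j}(\alpha)$, i.e.\ an expansion in the $I_{n,j}(\alpha)$ whose coefficients are the quantities we want to identify. The second way exploits smoothness: I would set $M:=\partial K$, a compact smooth hypersurface of $S^{n-1}$ with inward unit normal~$\nu$ and positive principal curvatures $\kappa_1(p),\dots,\kappa_{n-2}(p)$, and observe that $\mcT(K,\alpha)=K\cup\phi(M\times[0,\alpha])$, where $\phi(p,\rho):=\exp_p(-\rho\,\nu(p))$ parametrizes the outer collar. (For $q\notin K$ the nearest point of $K$ lies on $M$ and is reached by a geodesic orthogonal to $M$ pointing out of $K$; conversely any such geodesic segment of length $\le\alpha$ lands in $\mcT(K,\alpha)$; the two pieces overlap only in the null set $M=\phi(M\times\{0\})$.) By the tubular neighborhood theorem, $\phi$ restricts to a diffeomorphism of $M\times[0,\alpha]$ onto the closed collar for all sufficiently small $\alpha>0$, and this is all that will be used.

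Next I would apply the change-of-variables formula (or the coarea formula \eqref{eq:cor-coarea-Riem-2}) to get $\vol_{n-1}\mcT(K,\alpha)=\vol_{n-1}(K)+\int_{M\times[0,\alpha]}\ndet D_{(p,\rho)}\phi$, and evaluate the normal Jacobian. This is the classical spherical tube Jacobian: the variation field of $\rho\mapsto\phi(c(s),\rho)$, for a curve $c$ in $M$ through $p$, is the Jacobi field along $\rho\mapsto\exp_p(-\rho\,\nu(p))$ with initial value $\dot c(0)$ and covariant initial derivative $-D_p\nu(\dot c(0))=\W_p(\dot c(0))$ by \eqref{eq:weing-hypersurf}; since $S^{n-1}$ has constant sectional curvature $1$, solving $J''+J=0$ in a parallel orthonormal eigenframe of the self-adjoint map $\W_p$ yields
\[ \ndet D_{(p,\rho)}\phi \;=\; \prod_{i=1}^{n-2}\bigl(\cos\rho+\kappa_i(p)\sin\rho\bigr) \;=\; \sum_{k=0}^{n-2}\sigma_k(p)\,(\cos\rho)^{n-2-k}(\sin\rho)^k , \]
where $\sigma_k(p)$ is the $k$th elementary symmetric function of $\kappa_1(p),\dots,\kappa_{n-2}(p)$ (all factors are positive since $\kappa_i(p)>0$ and $0\le\rho\le\tfrac{\pi}{2}$). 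Integrating over $\rho\in[0,\alpha]$, recognizing $\int_0^\alpha(\cos\rho)^{n-2-k}(\sin\rho)^k\,d\rho=I_{n,n-2-k}(\alpha)$ from \eqref{eq:def-I}, and reindexing $j=n-2-k$, this gives $\vol_{n-1}\mcT(K,\alpha)=\vol_{n-1}(K)+\sum_{j=0}^{n-2}\bigl(\int_M\sigma_{n-2-j}(p)\,dM\bigr)I_{n,j}(\alpha)$.

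Finally I would cancel $\vol_{n-1}(K)$ from the two expansions and equate coefficients. This is legitimate because $I_{n,0},\dots,I_{n,n-2}$ are linearly independent on $(0,\tfrac{\pi}{2})$: dividing their derivatives $(\cos\rho)^j(\sin\rho)^{n-2-j}$ by $(\cos\rho)^{n-2}$ turns them into the distinct powers $(\tan\rho)^{n-2-j}$, which are linearly independent. Matching the coefficient of $I_{n,j}(\alpha)$ for each $j$ gives $V_j(K)\,\mcO_j\mcO_{n-2-j}=\int_M\sigma_{n-2-j}(p)\,dM$, which is the asserted formula. I expect the only genuinely delicate step to be the evaluation of the normal Jacobian, and in particular getting the sign of the $\kappa_i(p)\sin\rho$ term right for our orientation conventions (inward normal $\nu$, $\W_p$ positive definite, geodesic running in direction $-\nu$); as a consistency check, this matches the case of a spherical cap of angular radius $r$, whose boundary is a latitude circle of geodesic curvature $\cot r$ and for which $\ndet D\phi=\sin(r+\rho)/\sin r=\cos\rho+\cot r\sin\rho$.
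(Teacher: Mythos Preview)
Your proposal is correct and matches the approach the paper indicates: the paper does not spell out a proof but states that the proposition ``directly follows from Weyl's tube formula~\eqref{eq:def-V_j}'' and refers to \cite{weyl:39} and \cite[Ch.~4]{am:thesis}. Your argument---computing $\vol_{n-1}\mcT(K,\alpha)$ once via \eqref{eq:def-V_j} and once via the normal exponential parametrization of the outer collar, evaluating the spherical Jacobi-field Jacobian $\prod_i(\cos\rho+\kappa_i(p)\sin\rho)$, and matching coefficients in the linearly independent family $\{I_{n,j}\}$---is exactly the standard derivation behind that remark, with all orientation and sign checks carried out correctly.
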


For an important special case consider a circular cap 
$K:=B(z,\beta)=\{p\in S^{n-1}\mid d(z,p)\leq \beta\}$ of radius $\beta\in(0,\pi)$. 
The boundary $M=\partial K$ is a sphere of dimension $n-2$ and radius $\sin\beta$.
Hence $\vol_{n-2}(M)=\sin(\beta)^{n-2}\cdot \mcO_{n-2}$. 
The principal curvatures of $M$ at any of its points~$p$
are given by $\kappa_1(p)= \ldots = \kappa_{n-2}(p) = \cot(\beta)$. 
In particular, we have $\sigma_i(p) = \binom{n-2}{i} \cot(\beta)^i$.
Proposition~\ref{pro:V_j(K)--smooth} implies that for $1\leq j < n$, 
\begin{align*}
   V_{j}(K) & =  
             \frac{\mcO_{n-2}}{\mcO_{j-1}\cdot \mcO_{n-j-1}}\cdot 
        \binom{n-2}{j-1}\cdot \sin(\beta)^{j-1}\cdot \cos(\beta)^{n-j-1} 
\\[2mm] & \stackrel{\eqref{eq:form-halfbinom}}{=} \binom{(n-2)/2}{(j-1)/2}
              \cdot \frac{\sin(\beta)^{j-1}\cdot \cos(\beta)^{n-j-1}}{2} \; .
\end{align*}
Furthermore, 
\begin{equation}\label{eq:f_n}
 V_{n}(K) = \frac{\mcO_{n-2}}{\mcO_{n-1}}\cdot 
                   \int_0^\beta \sin(\rho)^{n-2} \, d\rho 
  \stackrel{\eqref{eq:binom(n/2,m/2)=...}}{=} 
    \binom{(n-2)/2}{(n-1)/2}\cdot 
    \frac{n-1}{2}\cdot \int_0^\beta \sin(\rho)^{n-2} \, d\rho \; . 
\end{equation}
Recalling $V_{0}(K)=V_{n}(\breve{K})$, we get from this 
\begin{equation}\label{eq:f_0}
      V_{0}(K) = \frac{\mcO_{n-2}}{\mcO_{n-1}}\cdot 
                   \int_0^\beta \cos(\rho)^{n-2} \, d\rho 
 = \binom{(n-2)/2}{-1/2}\cdot \frac{n-1}{2}\cdot 
                  \int_0^{\frac{\pi}{2}-\beta} \sin(\rho)^{n-2} \, d\rho \; .
\end{equation} 

The intersection of a Lorentz cone $\mcL^n$ with $S^{n-1}$ is a 
circular cap of radius $\beta=\frac{\pi}{4}$.
From the above, we obtain the formula for 
$f_j(n) = V_{j}(\mcL^n)$ that was already stated in \eqref{eq:def-f_j(n)}.

\section{Deriving the Grassmannian tube formula}\label{se:tube-form} 

Let $C\subseteq\IR^n$ be a regular cone and put $K:=C\cap S^{n-1}$.
In a first step, we reduce the proof of Theorem~\ref{thm:tube-form} 
to the case where $K$ is smooth. 
Consider both sides of the inequality~\eqref{eq:vol-tube-Sigma-est-nice}:  
the intrinsic volumes $V_{j+1}(C)=V_j(K)$ are continuous in~$K$ with respect to the Hausdorff metric. 
Furthermore, it is straightforward to check that, for fixed~$\alpha$,  
$\rvol \TP(\Sigma_m(K),\alpha)$ depends continuously on $K$, cf.~\cite[Lemma~6.1.7]{am:thesis}. 
Moreover, by Lemma~\ref{le:dense},  $\mcK^\sm(S^{n-1})$ is dense in $\mcK^r(S^{n-1})$. 
Therefore, for proving Theorem~\ref{thm:tube-form}, 
we can assume without loss of generality that  $K\in\mcK^\sm(S^{n-1})$. 

\subsection{Parameterizing the tube around \texorpdfstring{$\SG_m$}{Sigma}}

In this subsection, we assume that $C=\cone(K)$ for some $K\in\mcK^\sm(S^{n-1})$. 
Fix $1\leq m< n$.
In Section~\ref{se:intro-Grassmann}, we assigned to $C$ 
the compact subsets $\PG_m(C)$ and $\DG_m(C)$ of the Grassmann manifold~$\Gr_{n,m}$. 
The set $\SG_m(C)=\mcP_m(C)\cap\mcD_m(C)$ consists of 
the subspaces~$W$ touching $C$. It is known that $\SG_m:=\SG_m(C)$ is the common boundary 
of $\PG_m(C)$ and $\DG_m(C)$, cf.~\cite{ambu:11c}. 
We analyze now the geometry of $\SG_m(C)$.

We begin with the basic observation that any $W\in\Sigma_m(C)$ intersects $K$ in 
a unique point~$p$, which moreover lies in the boundary of $K$. 
This allows us to define the map
$\PiM \colon \SG_m  \to \partial K,\, W\mapsto p$.

\begin{lemma}\label{prop:WcapK=p}
If $W\in\SG_m$, then $W\cap K=\{p\}$ for some $p\in\partial K$.
\end{lemma}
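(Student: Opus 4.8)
The statement has two parts: first, that $W\cap K$ is a single point, and second, that this point lies in $\partial K$. I would begin with the second part as a consequence of the structural facts recalled in Section~\ref{se:intro-Grassmann}. If $W\in\SG_m$ then, by the characterization $\SG_m(C) = \{W\in\Gr_{n,m}\mid W\cap C\neq\{0\}\text{ and }W\cap\inter(C)=\emptyset\}$, the subspace $W$ meets $C$ nontrivially but misses the interior of $C$; hence any point of $W\cap C\cap S^{n-1} = W\cap K$ lies in $C\setminus\inter(C) = \partial C$, i.e.\ in $\partial K$. So it only remains to show that $W\cap K$ cannot contain two distinct points.

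For the uniqueness, suppose $p,q\in W\cap K$ with $p\neq q$. Since $W$ misses $\inter(C)$, I claim $q=-p$ is impossible: if $q=-p$ then the whole line $\mathbb{R}p\subseteq W$, and because $C$ is regular (pointed with nonempty interior), picking any $x\in\inter(C)$ one can write $x$ as a combination that, together with $p$ and $-p$ being in the cone, forces a nontrivial linear subspace inside $C$ or forces $p\in\inter(C)$ --- either way contradicting regularity or $W\cap\inter(C)=\emptyset$. More directly: $p\in C$ and $-p\in\overline{C}$ would give $p\in C\cap(-C)=\{0\}$ since $C$ is pointed, contradicting $p\in S^{n-1}$. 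So $q\neq\pm p$, and the great-circle segment from $p$ to $q$ lies in $K$ because $K$ is spherically convex. This segment also lies in $W$ (it is contained in the $2$-plane $\lin\{p,q\}\subseteq W$, intersected with $S^{n-1}$). Now I would argue that the relative interior of this segment lies in $\inter(C)$: the cone over the segment is a $2$-dimensional face-free piece of $C$, and a relative interior point of a segment joining two boundary points of a convex body either lies in the interior or the segment lies in the boundary. Here I use that $K\in\mcK^\sm(S^{n-1})$ is \emph{smooth} with \emph{nowhere vanishing Gaussian curvature} --- so $\partial K$ contains no line segment (strict convexity), forcing the open segment into $\inter(K)\subseteq\inter(C)$. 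This contradicts $W\cap\inter(C)=\emptyset$. Hence $W\cap K$ is a single point $p$, which by the first paragraph lies in $\partial K$.

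\textbf{Main obstacle.}
The delicate point is the reduction to the smooth case: the lemma as stated in this subsection is proved under the standing assumption $K\in\mcK^\sm(S^{n-1})$, and it is precisely the strict convexity coming from positive Gaussian curvature that rules out a nondegenerate segment in $W\cap K$. Without smoothness the statement can fail (a polyhedral cone has flat faces, and an $m$-subspace can meet such a face in a segment), so the argument must genuinely invoke that $\partial K$ contains no great-circle arc. The one computational nuisance is verifying cleanly that the open great-circle segment between two points of $\partial C$ lands in $\inter(C)$ rather than in $\partial C$; I would handle this by the standard convexity fact that if a relatively open segment of a convex set meets the boundary it lies entirely in the boundary, combined with strict convexity of $\partial K$. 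Everything else is bookkeeping with the already-recalled properties of $\SG_m(C)$ and pointedness of $C$.
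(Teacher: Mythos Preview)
Your proposal is correct and follows essentially the same approach as the paper: rule out $q=-p$ by pointedness of $C$, obtain a great-circle arc in $W\cap K\subseteq\partial K$, and derive a contradiction from the smoothness assumption (positive Gaussian curvature). The only cosmetic difference is that the paper phrases the final step directly as ``an arc in $\partial K$ forces zero Gaussian curvature along it,'' whereas you route through strict convexity to push the open arc into $\inter(K)$; since you already established $W\cap K\subseteq\partial K$, the paper's phrasing is slightly more direct, but the content is the same.
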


\begin{proof}
As $W\in\SG_m$, we have $\SG_m\cap C=\Sigma_m\cap \partial C\neq\{0\}$. It follows that there exists $p\in W\cap K$. 
To prove that $p$ is the only element in $W\cap K$, we assume that there exists $q\in W\cap K$, $p\neq q$. 
As~$K$ is regular, we have $p\neq -q$, so that there exists a unique great circle segment 
between~$p$ and~$q$. 
By convexity of $W$ and $K$, this arc lies in $W\cap K$, and thus in the boundary of~$K$. 
But this implies that along this arc, 
$M$~has zero Gaussian curvature, which contradicts the assumption $K\in\mcK^\sm(S^{n-1})$.
\end{proof}

Write $M:=\partial K$. Later on, we will see that $\PiM \colon \SG_m  \to M,\, W\mapsto p$ can be 
interpreted as the Grassmann bundle $\Gr(M,m-1)$ over $M$. 
This will lead us to the following basic result, 
whose proof is postponed to Section~\ref{sec:proof-tube-form}.  

\begin{proposition}\label{pro:Sigma} 
Let $K\in\mcK^\sm(S^{n-1})$ and $C:=\cone(K)$. 
Then $\Sigma_m(C)$ is a connected hypersurface of $\Gr_{n,m}$. 
Moreover, $\Sigma_m(C)$ has a unit normal vector field $\nu_\Sigma$ such that 
$\nu_\Sigma$ points into~$\PG_m(C)$ and $-\nu_\Sigma$ points into~$\DG_m(C)$ 
(compare Section~\ref{se:exp-map-tubes} for this notion). 
\end{proposition}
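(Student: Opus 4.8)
The plan is to realize $\Sigma_m(C)$ explicitly as the image of a submersion-type parameterization, and to read off smoothness, connectedness and the existence of a coorienting normal from that description. The starting point is Lemma \ref{prop:WcapK=p}: every $W\in\Sigma_m$ meets $K$ in a single boundary point $p=\PiM(W)\in M$. At such a $W$, the defining tangency condition is that $W\cap C=\{ \mathbb{R}_{\ge 0}\,p\}$, i.e. $p\in W$ and $W$ lies on one side of the supporting hyperplane $T_pM\oplus\mathbb{R}p$ of $C$ through $p$. Equivalently, writing $T_p:=T_pM$ (an $(n-2)$-dimensional subspace of $\IR^n$), the subspace $W$ contains the line $\IR p$ and its "horizontal part" $W\cap p^\bot$ is an $(m-1)$-dimensional subspace of $T_p\oplus\IR\nu(p)$ that touches the (strictly convex, by smoothness of $K$) tangent cone but does not cross it. I would first make this precise: the touching condition forces $W\cap p^\bot\subseteq T_p$, so that $W=\IR p\oplus U$ with $U\in\Gr(T_p,m-1)$, $U$ an $(m-1)$-plane in the tangent space of $M$ at $p$. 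This is exactly the assertion that $\PiM$ exhibits $\Sigma_m$ as the total space of the Grassmann bundle $\Gr(M,m-1)$ over $M$, whose fiber over $p$ is $\Gr(T_pM,m-1)\cong\Gr_{n-2,m-1}$.

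Next I would check the converse and thereby get a clean parameterization: given $p\in M$ and $U\in\Gr(T_pM,m-1)$, the subspace $W:=\IR p\oplus U$ lies in $\Sigma_m(C)$, because $W$ meets $C$ (it contains $p$) but meets $\inter(C)$ nowhere (it lies in the supporting hyperplane at $p$, and by nonvanishing Gaussian curvature $M$ lies strictly on one side of that hyperplane near $p$, while globally convexity of $K$ keeps $W$ out of the interior). This gives a bijection between the total space of $\Gr(M,m-1)$ and $\Sigma_m$; smoothness of this map and of its inverse follows because $M$ is a smooth hypersurface of $S^{n-1}$ and the assignment $(p,U)\mapsto \IR p\oplus U$ is a smooth map into $\Gr_{n,m}$ which is an immersion (one computes its differential on the vertical directions along fibers of $\Gr(M,m-1)$ and on a horizontal lift of a tangent vector to $M$, and checks injectivity of the total differential using that the normal $\nu(p)$ is genuinely "moving out" of $W$). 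A dimension count confirms $\dim\Sigma_m=(n-2)+(m-1)(n-m-1)=\dim\Gr_{n,m}-1$, so $\Sigma_m$ is a hypersurface. Connectedness is then immediate: $M=\partial K$ is connected (it is a smooth closed hypersurface bounding a convex body on the sphere), each fiber $\Gr_{n-2,m-1}$ is connected, and a fiber bundle with connected base and connected fiber has connected total space.

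For the normal vector field and its coorientation, I would argue as follows. Since $\Sigma_m$ is the common boundary of the closed sets $\PG_m(C)$ and $\DG_m(C)$ with $\Gr_{n,m}=\PG_m(C)\cup\DG_m(C)$ (quoted from \cite{ambu:11a,ambu:11c}), and $\Sigma_m$ is a smooth hypersurface, locally $\Sigma_m$ separates a tubular neighborhood into two pieces, one in each of $\PG_m(C)\setminus\Sigma_m$, $\DG_m(C)\setminus\Sigma_m$ — here I use that these are exactly the two connected components of the complement, as recalled in Section \ref{se:intro-Grassmann}. Hence $\Sigma_m$ is two-sided, i.e. coorientable, and choosing $\nu_\Sigma(W)$ to be the unit normal pointing toward the $\PG_m(C)$ side produces a global smooth unit normal field with $\nu_\Sigma$ pointing into $\PG_m(C)$ and $-\nu_\Sigma$ into $\DG_m(C)$ in the sense of Section \ref{se:exp-map-tubes}. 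Concretely, along a geodesic $t\mapsto R_t(W)$ that rotates $W=\IR p\oplus U$ by tilting the line $\IR p$ infinitesimally toward $+\nu(p)$ (inside $K$) versus toward $-\nu(p)$ (outside $K$), the resulting subspace either acquires a vector in $\inter(C)$ or retreats into the exterior; this makes the coorientation explicit and matches the claim. The main obstacle I anticipate is the second paragraph: proving that the parameterization $(p,U)\mapsto\IR p\oplus U$ is a smooth embedding (in particular that its differential is everywhere injective), since one must carefully separate the "vertical" deformations of $U$ within $T_pM$ from the "horizontal" deformation of $p$ along $M$, and verify that a horizontal motion of $p$ genuinely changes $W$ transversally to the fiber — this is where the nonvanishing Gaussian curvature of $M$ is used in an essential way, and it is also what ultimately forces $W\cap p^\bot\subseteq T_pM$ rather than merely $W\cap p^\bot\subseteq T_pM\oplus\IR\nu(p)$.
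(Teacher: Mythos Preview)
Your approach is essentially the paper's: parameterize $\Sigma_m$ by the Grassmann bundle $\Gr(M,m-1)$ via $(p,U)\mapsto\IR p\oplus U$, verify this is an embedding onto a hypersurface, and identify the normal with the direction that rotates $p$ toward $\nu(p)$. The paper carries out the immersion check---which you correctly flag as the main obstacle---by lifting to the frame bundle $F(M)\hookrightarrow O(n)$ and computing tangent spaces in explicit matrix coordinates (Section~\ref{se:lifting} and Proposition~\ref{prop:TSigma_m}); the upshot is $\rk D_{(p,Y)}\Phi_m=m(n-m)-m+\rk\W_{p,Y}$, so positive definiteness of $\W_p$ gives full rank. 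One small correction to your closing remark: the containment $W\cap p^\perp\subseteq T_pM$ follows already from smoothness of $M$ (unique supporting hyperplane), not from the positive-curvature hypothesis; the latter is what guarantees both uniqueness of $p$ (Lemma~\ref{prop:WcapK=p}) and nondegeneracy of $\W_{p,Y}$. For the normal field, your topological two-sidedness argument is valid, though the paper instead exhibits $\nuS(W)=[Q,N]$ explicitly, since this concrete form is needed anyway for Theorem~\ref{thm:norm-Jac}.
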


Proposition~\ref{pro:Sigma} says that we are in the situation of Section~\ref{se:basic-outline}, 
where $\M=\Gr_{n,m}$ and $\mcH=\SG_m(C)$. 
We reparameterize the map 
$\psi\colon\SG_m\times\R,(W,\theta)\mapsto \exp_W(\theta\,\nu_\Sigma(W))$ 
by setting $t=\tan\theta$ and thus obtain the smooth map
\begin{equation}\label{eq:def-Psi}
\Psi\colon \Sigma_m\times \IR \to \Gr_{n,m},\ W \mapsto \exp_W(\arctan t\, \nuS(W)) \; .
\end{equation}

The main technical difficulty is to understand the normal Jacobian of $\Psi$. 
It turns out that the normal Jacobian is a certain subspace-dependent version of the characteristic polynomial 
of the Weingarten map of $M=\partial K$. 
In the next section we define this purely algebraic notion, for which we coined the name 
twisted characteristic polynomal. 

\subsection{Twisted characteristic polynomials}\label{sec:tw-charpol}

Let $\vp$ be an endomorphism of a $k$-dimensional Euclidean vector space~$V$. 
We denote by $\sigma_j(\vp)$, $0\leq j\leq k$, the coefficients of 
the characteristic polynomial of $\vp$ (up to sign). More precisely,
  \[ \det(\vp-t\cdot \id_V) = \sum_{i=0}^k (-1)^{k-i}\cdot 
     \sigma_i(\vp)\cdot t^{k-i} \; . \]
Note that we have $\sigma_k(\vp)=\det(\vp)$, $\sigma_0(\vp)=1$, 
and $\sigma_1(\vp)=\trace(\vp)$.
In the following we denote by $\Gr(V,\ell)$ the set of all $\ell$-dimensional linear subspaces of~$V$.

\begin{definition}\label{def:tw-char-pol}
Let $Y\in\Gr(V,\ell)$, and denote by $\Pi_Y$ and $\Pi_{Y^\bot}$ the orthogonal 
projections onto~$Y$ and~$Y^\bot$, respectively.
The \emph{twisted characteristic polynomial $\ch_Y$ with respect to $Y$} is defined as 
$$
 \ch_Y(\vp,t)  := t^{k-\ell} \cdot\det\left(\vp-\left(t\cdot\Pi_Y-\tfrac{1}{t}
                         \cdot\Pi_{Y^\bot}\right)\right) \; ,
$$
We denote by $\vp_Y\colon Y\to Y$ the \emph{restriction} of~$\vp$, 
i.e., $\vp_Y(y):=\Pi_Y(\vp(y))$, and we use the notation $\tdet_Y(\vp):=\det(\vp_Y)$.
\end{definition}

Note that for $\ell=k$ we get $\ch_V(\vp,t)=\det(\vp-t\cdot \id_V)$, 
the usual characteristic polynomial, whereas for $\ell=0$ we get 
$\ch_0(\vp,t)=\det(t\cdot \vp+\id_V)$. 

We claim that 
\begin{equation}\label{eq:ch_Y(phi,0)=...}
  \ch_Y(\vp,0) 
   =\tdet_Y(\vp) \; .
\end{equation}
In order to see this, 
let us express the twisted characteristic polynomial in coordinates.
For $A\in\IR^{k\times k}$ and $0\leq\ell\leq k$ we use the notation
$\ch_\ell(A,t) := \ch_{\IR^\ell\times0}(\vp,t)$, 
where $\vp\colon\IR^k\to\IR^k$, $x\mapsto Ax$. Note that if $A$ has the block decomposition 
$A=\left(\begin{smallmatrix} A_1 & A_2 \\ A_3 & A_4\end{smallmatrix}\right)$, 
where $A_1\in\IR^{\ell\times\ell}$, and the other blocks accordingly, then
\begin{equation}\label{eq:ch_l(A,t)=...}
  \ch_\ell(A,t) = \det \begin{pmatrix} A_1-tI_\ell & A_2 
                          \\ tA_3 & tA_4+I_{k-\ell} \end{pmatrix} \; . 
\end{equation}
From this description we get for $Y=\IR^\ell\times0$ the identity 
$\ch_Y(\vp,0) =\det(A_1)=\tdet_Y(\vp)$, which proves~\eqref{eq:ch_Y(phi,0)=...}.

The next result expresses the expectation of the twisted characteristic polynomial~$\ch_Y(\vp,t)$, 
taken over a random subspace $Y\in\Gr(V,\ell)$, concisely in terms of the 
(coefficients of) the characteristic polynomial of $\vp$. 
We postpone the proof of this result to Appendix~\ref{sec:ave-tw-charpol}. 

\begin{thm}\label{thm:ave-twist-char-poly}
Let $V$ be a $k$-dimensional Euclidean vector space and let $\vp$ be 
an endomorphism of $V$. If $Y\in\Gr(V,\ell)$ is chosen uniformly 
at random, then
\begin{equation}\label{eq:E_Y(det_Y)=...}
  \underset{Y}{\IE}\big[\tdet_Y(\vp)\big] = \frac{1}{\binom{k}{\ell}}
                                            \cdot \sigma_\ell(\vp) \; .
\end{equation}
Moreover, the expectation of the twisted characteristic polynomial
 is given by
\begin{equation}\label{eq:E_Y[ch_Y]=...}
   \underset{Y}{\IE}\left[\ch_Y(\vp,t)\right] = \sum_{i,j=0}^k d_{ij}
                      \cdot \sigma_{k-j}(\vp)\cdot t^{k-i} \;, 
\end{equation}
where the coefficients $d_{ij}$ are given by 
\begin{equation*}\label{eq:form-d_(ij)-tw-pol}
  d_{ij} \;:=\; (-1)^{\frac{i-j}{2}-\frac{\ell}{2}}\cdot 
                \frac{\binom{\ell}{\frac{i-j}{2}+\frac{\ell}{2}}\cdot 
                \binom{k-\ell}{\frac{i+j}{2}-\frac{\ell}{2}}}{\binom{k}{j}} \; ,
\end{equation*}
for $i+j+\ell\equiv 0 \bmod 2$,  
$0\leq \tfrac{i-j}{2}+\tfrac{\ell}{2} \leq \ell$, and 
$0\leq \tfrac{i+j}{2}-\tfrac{\ell}{2} \leq k-\ell$; 
and given by $d_{ij}:=0$ otherwise. 
{\rm (}Note that $|d_{ij}|=d_{ij}^{k+2,\ell+1}$ in the notation of~\eqref{eq:form-d_(ij)}.{\rm )}

If $\vp$ is positive semidefinite, then we have for $t\in\IR$ that  
\begin{equation}\label{star}
\underset{Y}{\IE}\Big[\big|\ch_Y(\vp,t)\big|\Big] \;\leq\; 
     \sum_{i,j=0}^k \left|d_{ij}\right|\cdot \sigma_{k-j}(\vp)\cdot |t|^{k-i} \; . 
\end{equation}
\end{thm}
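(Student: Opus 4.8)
The plan is to reduce all three assertions to one combinatorial identity: an expansion of $\ch_Y(\vp,t)$ as a signed sum of principal minors of $\vp$, together with a single symmetry computation of the expectation of a random principal minor.

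First I would pick an orthonormal basis $f_1,\dots,f_k$ of $V$ adapted to $V=Y\oplus Y^\bot$, with $Y=\mathrm{span}(f_1,\dots,f_\ell)$, and write $A=(\langle f_i,\vp f_j\rangle)$ for the matrix of $\vp$ in this basis. In these coordinates $t\,\Pi_Y-\tfrac1t\Pi_{Y^\bot}$ is the diagonal matrix with $\ell$ entries equal to $t$ and $k-\ell$ entries equal to $-\tfrac1t$, so expanding $\det\!\big(A-(t\,\Pi_Y-\tfrac1t\Pi_{Y^\bot})\big)$ multilinearly in its columns (in each column keeping either the $\vp$-part or the scalar multiple of the corresponding basis vector) and multiplying by $t^{k-\ell}$ should produce an identity of the form
\[
 \ch_Y(\vp,t)=\sum_{a=0}^{\ell}\sum_{b=0}^{k-\ell}(-1)^{\ell-a}\,t^{\,\ell-a+b}\,\tau^Y_{a,b}(\vp),
\]
where $\tau^Y_{a,b}(\vp)$ denotes the sum of the principal minors $\det(A_{I,I})$ over index sets $I\subseteq\{1,\dots,k\}$ with exactly $a$ indices among $\{1,\dots,\ell\}$ and exactly $b$ among $\{\ell+1,\dots,k\}$. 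This already shows that $\ch_Y(\vp,t)$ is a bona fide polynomial in $t$, that it specializes to the ordinary characteristic polynomial for $\ell=k$ and to $\det(\id_V+t\vp)$ for $\ell=0$, and that $\tau^Y_{\ell,0}(\vp)=\tdet_Y(\vp)$, in agreement with \eqref{eq:ch_Y(phi,0)=...}.

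Next I would compute $\IE_Y[\tau^Y_{a,b}(\vp)]$. Realizing a uniform $Y\in\Gr(V,\ell)$ as $Q(\IR^\ell\times 0)$ with $Q$ distributed according to Haar measure on $O(V)$, the matrix $A$ becomes $Q^T\vp Q$. Replacing $Q$ by $QP$ for a permutation matrix $P$ leaves $Q$ Haar-distributed but permutes the coordinates, so $\IE_Q[\det(A_{I,I})]$ depends only on $|I|$; summing over all $I$ of that cardinality gives the deterministic quantity $\sigma_{|I|}(Q^T\vp Q)=\sigma_{|I|}(\vp)$, whence $\IE_Q[\det(A_{I,I})]=\binom{k}{|I|}^{-1}\sigma_{|I|}(\vp)$. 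Since there are $\binom{\ell}{a}\binom{k-\ell}{b}$ admissible $I$ for a given pair $(a,b)$, this yields
\[
 \IE_Y\!\big[\tau^Y_{a,b}(\vp)\big]=\frac{\binom{\ell}{a}\binom{k-\ell}{b}}{\binom{k}{a+b}}\,\sigma_{a+b}(\vp).
\]
Taking $(a,b)=(\ell,0)$ gives \eqref{eq:E_Y(det_Y)=...}. For \eqref{eq:E_Y[ch_Y]=...} I would take expectations term by term in the first identity and carry out the substitution $a=\tfrac{i-j}{2}+\tfrac{\ell}{2}$, $b=(k-\ell)-\big(\tfrac{i+j}{2}-\tfrac{\ell}{2}\big)$, so that $a+b=k-j$ and $\ell-a+b=k-i$; then the coefficient of $\sigma_{k-j}(\vp)\,t^{k-i}$ is $(-1)^{\ell-a}\binom{\ell}{a}\binom{k-\ell}{b}\binom{k}{a+b}^{-1}$, which I would check equals $d_{ij}$, the parity condition $i+j+\ell\equiv 0\pmod 2$ and the two range conditions on $(i,j)$ being exactly the requirements that $a\in\{0,\dots,\ell\}$ and $b\in\{0,\dots,k-\ell\}$ be integers.

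For the positive semidefinite case \eqref{star}, I would observe that then $A$ is symmetric and positive semidefinite, so every principal minor $\det(A_{I,I})$ is $\ge 0$; hence $\tau^Y_{a,b}(\vp)\ge 0$ and $\sigma_j(\vp)\ge 0$. Applying the triangle inequality to the expansion of $\ch_Y(\vp,t)$ and then the expectation formula gives
\[
 \IE_Y\!\big[|\ch_Y(\vp,t)|\big]\le\sum_{a,b}|t|^{\ell-a+b}\,\IE_Y[\tau^Y_{a,b}(\vp)]=\sum_{a,b}\frac{\binom{\ell}{a}\binom{k-\ell}{b}}{\binom{k}{a+b}}\,\sigma_{a+b}(\vp)\,|t|^{\ell-a+b},
\]
which under the same change of variables equals $\sum_{i,j}|d_{ij}|\,\sigma_{k-j}(\vp)\,|t|^{k-i}$, since $|d_{ij}|=\binom{\ell}{a}\binom{k-\ell}{b}\binom{k}{a+b}^{-1}$.

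I do not anticipate a real obstacle: the only inputs are multilinearity of the determinant and $O(V)$-invariance of Haar measure. The step demanding the most care is the index bookkeeping in the second part -- confirming that the admissibility conditions stated for $(i,j)$ are precisely the statement that $(a,b)$ is an integer point of the box $\{0,\dots,\ell\}\times\{0,\dots,k-\ell\}$, and that $(-1)^{\ell-a}$ becomes $(-1)^{(i-j)/2-\ell/2}$. It is also worth noting why the crude bound in the last step is essentially sharp: the numbers $|d_{ij}|$ are exactly the nonnegative expectations $\IE_Y[\tau^Y_{a,b}(\vp)]/\sigma_{a+b}(\vp)$, so no cancellation is lost in passing to absolute values.
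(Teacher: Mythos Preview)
Your proposal is correct and follows essentially the same route as the paper: multilinear expansion of the determinant into principal minors, the permutation-symmetry argument showing $\IE_Q[\prm_J(Q^TAQ)]=\binom{k}{|J|}^{-1}\sigma_{|J|}(A)$, and for the psd case the nonnegativity of principal minors combined with the triangle inequality. The only cosmetic difference is that you identify the coefficients $d_{ij}$ directly by grouping the minors according to $(a,b)=(|J\cap[\ell]|,|J\cap[\ell]^c|)$ and changing variables, whereas the paper determines them by evaluating both sides at $A=sI_k$ and matching coefficients of $(s-t)^\ell(1+st)^{k-\ell}$; your bookkeeping is slightly more direct but the two arguments are equivalent.
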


We are now in a position to state the announced formula that expresses 
the Jacobian of the parameter map $\Psi$ 
in terms of the twisted characteristic polynomial of the Weingarten map of $\partial K$.
We postpone the rather difficult proof to Section~\ref{sec:proof-tube-form}.  

\begin{thm}\label{thm:norm-Jac}
Let $K\in\mcK^\sm(S^{n-1})$ and write 
$M:=\partial K$, $C:=\cone(K)$, and $\SG_m:=\SG_m(C)$.
For $W\in\Sigma_m$ let $\nuS(W)\in T_W\Gr_{n,m}$ denote the unit 
normal vector pointing inside $\PG_m(K)$. 
We define the maps $\PiM$ and~$\Psi$ via
  \begin{align}\label{eq:def-Pi_m-Psi}
     \PiM \colon \Sigma_m & \to M & \Psi\colon \Sigma_m\times \IR & \to \Gr_{n,m}
  \\[1mm] W & \mapsto p \;\text{, where } W\cap K=\{p\} \; , & (W,t) & 
     \mapsto \exp_W(\arctan t\cdot \nuS(W)) \; . \nonumber
  \end{align}
Then the normal Jacobians of $\PiM$ and $\Psi$ are given by
\begin{align}\label{eq:ndet(D_wPiM)...}
    \ndet(D_W\PiM) & = \tdet_Y(\W_p)^{-1} , & |\det(D_{(W,t)}\Psi)| 
    & = (1+t^2)^{-n/2}\cdot \frac{\left|\ch_Y(\W_p,-t)\right|}{\tdet_Y(\W_p)} ,
\end{align}
where $W\cap K=\{p\}$, $Y:=p^\bot\cap W$,
and $\W_p$ denotes the Weingarten map of $M$ at $p$.
\end{thm}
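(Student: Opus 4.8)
\textbf{Plan of proof for Theorem~\ref{thm:norm-Jac}.}
The plan is to compute both Jacobians by choosing an explicit moving frame adapted to the geometry of $\Sigma_m$ as a Grassmann bundle over $M$, and then to recognize the resulting determinants as (twisted) characteristic polynomials of the Weingarten map. First I would set up the differential-geometric picture: by Proposition~\ref{pro:Sigma}, $\Sigma_m$ is a smooth hypersurface, and by Lemma~\ref{prop:WcapK=p} each $W\in\Sigma_m$ meets $K$ in a single boundary point $p$, so $\PiM$ is well defined. Fixing $W\in\Sigma_m$ with $W\cap K=\{p\}$ and writing $Y:=p^\bot\cap W$ (so $W=\IR p\oplus Y$ and $Y\subseteq T_pM$ since $p$ is a smooth boundary point), I would produce an orthonormal frame of $T_W\Gr_{n,m}$ split according to the three natural types of infinitesimal motions of $W$: (i) rotations that move $p$ inside $M$ while keeping the ``$Y$-part'' along, parametrized by $T_pM$; (ii) rotations inside $W^\bot\cap T_pS^{n-1}$ that tilt $Y$ within $T_pS^{n-1}$; and (iii) the single normal direction, which I would identify with the geodesic $t\mapsto \exp_W(\arctan t\cdot\nuS(W))$. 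Using the identification $T_W\Gr_{n,m}\cong\operatorname{Hom}(W,W^\bot)$ and the description of geodesics as rotations in a plane $E$ with $\dim(E\cap W)=\dim(E\cap W^\bot)=1$, each of these directions becomes explicit.

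Next I would compute $D_W\PiM$ on this frame: directions of type (ii) lie in $\ker D_W\PiM$ (they move $W$ but not $p$), directions of type (iii) are normal, and on the complement $\PiM$ acts by the differential of the map ``$W\mapsto p$'', which, after unwinding, restricts the motion of $p$ on $S^{n-1}$ back to $M$ via the Weingarten map; concretely the relevant linear map on the subspace $Y$-worth of directions is $\W_{p}|_Y$ composed with projections, giving $\ndet(D_W\PiM)=\det(\vp_Y)^{-1}=\tdet_Y(\W_p)^{-1}$ after accounting for how the Riemannian metric on $\Sigma_m$ (as a submanifold of $\Gr_{n,m}$) compares with the product metric on the bundle $\Gr(M,m-1)$. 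For $\Psi$ I would differentiate $(W,t)\mapsto\exp_W(\arctan t\cdot\nuS(W))$ using Jacobi fields: along the geodesic in the normal direction $\nuS(W)$, the derivative in the $\Sigma_m$-directions is governed by the shape operator of $\Sigma_m$ and the ambient curvature of $\Gr_{n,m}$, while the $t$-derivative contributes the factor coming from $\frac{d}{dt}\arctan t=(1+t^2)^{-1}$ together with the length of the geodesic field. The point is that the normal Jacobian of $\exp$ along a geodesic of length $\theta=\arctan t$ in $\Gr_{n,m}$ factors over the principal curvature data of $M$ at $p$ in such a way that each eigendirection of $\W_p$ inside $Y$ contributes a factor $\cos\theta(\kappa_i\sin\theta - \cos\theta)$-type term and each eigendirection outside $Y$ (i.e.\ in $T_pM\ominus Y$) contributes the ``$-1/t$'' variant — precisely the pattern packaged in $\ch_Y(\W_p,-t)$ after the substitution $t=\tan\theta$; collecting the $(1+t^2)$ powers from the $n$-dimensional fiber gives the prefactor $(1+t^2)^{-n/2}$, and dividing by the base Jacobian $\tdet_Y(\W_p)$ (which converts volume on $\Sigma_m$ to volume on the bundle) yields the stated formula.

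I expect the main obstacle to be the bookkeeping in the second computation: identifying exactly which infinitesimal directions in $T_W\Gr_{n,m}$ get the ``$t\cdot\Pi_Y$'' weighting and which get the ``$-\frac1t\cdot\Pi_{Y^\bot}$'' weighting, and verifying that the curvature terms in the relevant Jacobi equation on the Grassmannian assemble into $\det\big(\W_p-(t\Pi_Y-\frac1t\Pi_{Y^\bot})\big)$ rather than some other combination. This is where the geometry of $\Gr_{n,m}$ (its sectional curvatures, and the fact that the geodesics we use are planar rotations) interacts most delicately with the bundle structure $\Sigma_m\cong\Gr(M,m-1)$, and where the index shift built into the definition of the flag coefficients and the $d_{ij}^{nm}$ has to be tracked carefully. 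A secondary point requiring care is the comparison of metrics — the volume form on $\Sigma_m$ inherited from $\Gr_{n,m}$ versus the natural one on the Grassmann bundle $\Gr(M,m-1)$ — since a discrepancy there would rescale both $\ndet(D_W\PiM)$ and $|\det D_{(W,t)}\Psi|$; the claim is precisely that the same factor $\tdet_Y(\W_p)$ appears in both, which is why the final tube formula is clean. Once the frame and the Jacobi-field computation are in place, the algebraic identities~\eqref{eq:ch_l(A,t)=...} and~\eqref{eq:ch_Y(phi,0)=...} let one read off the closed forms without further effort.
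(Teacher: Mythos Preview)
Your high-level decomposition of $T_W\Sigma_m$ into vertical directions (tilting $Y$ within $T_pM$), horizontal directions (moving $p$ along $M$), and the single normal direction $\nuS(W)$ is exactly right and matches the paper's orthogonal splitting $T_W\Sigma_m=T_W^v\Sigma_m\oplus T_W^h\Sigma_m$ together with $\IR\cdot\nuS(W)$. The identification of $\ker D_W\PiM$ with the vertical space and the claim that the horizontal block has determinant $\tdet_Y(\W_p)$ are also correct.

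Where you diverge from the paper is in the computation of $D\Psi$. You propose to use Jacobi fields along the normal geodesic, invoking the sectional curvatures of $\Gr_{n,m}$ and the shape operator of $\Sigma_m$, and then to recognize the resulting product of Jacobi determinants as $\ch_Y(\W_p,-t)$. The paper avoids Jacobi fields entirely. Instead it \emph{lifts} everything to $O(n)$ via the frame-bundle embedding $\hat\Phi\colon F(M)\hookrightarrow O(n)$, $(p,\bar Q)\mapsto Q=(p,\bar Q,\nu(p))$, so that $\Sigma_m$ is covered by $\hat\Sigma\subset O(n)$ and the normal geodesic becomes $\hat\Psi(Q,t)=Q\cdot Q_\rho$ with $Q_\rho$ the explicit $(1,n)$-plane rotation by $\rho=\arctan t$. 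Because this map is \emph{linear} in $Q$, its derivative is just right-multiplication by $Q_\rho$, and one only has to conjugate the explicit matrix $\hat E^h(a)$ encoding the horizontal tangent vectors by $Q_\rho$ and then project to $\ol{\Skew_n}$. The resulting $2\times 2$ block matrix in the $(a_1,a_2)$ coordinates is read off directly as $\left(\begin{smallmatrix} c\Lambda_1-sI & c\Lambda_2\\ s\Lambda_3 & cI+s\Lambda_4\end{smallmatrix}\right)$, and after pulling out $c^{n-2}=(1+t^2)^{-(n-2)/2}$ this is literally the block form~\eqref{eq:ch_l(A,t)=...} of $\ch_Y(\W_p,-t)$.

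Your Jacobi-field route is not wrong in principle, but it is substantially harder: you would need the full curvature tensor of $\Gr_{n,m}$ on the relevant $2$-planes and the second fundamental form of $\Sigma_m$, and you correctly flag as the ``main obstacle'' the bookkeeping that decides which eigendirections get the $t\Pi_Y$ weight and which get the $-\tfrac1t\Pi_{Y^\bot}$ weight. The paper's lift bypasses all of this: the miracle is that in $O(n)$ the exponential map in question is a global linear action, so no curvature ever enters, and the twisted characteristic polynomial falls out of a single $4\times 4$-block determinant rather than from assembling Jacobi solutions. If you want to complete your version, the cleanest path is probably to abandon Jacobi fields and adopt the $O(n)$ lift; otherwise you will effectively be recomputing the Grassmannian curvature in coordinates that reproduce the same block matrix.
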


\medskip

After these preparations, we are now ready to present the proof of the tube formula 
stated in Theorem~\ref{thm:tube-form}. 

\subsection{The Grassmannian tube formula}

\begin{proof}[Proof of Theorem~\ref{thm:tube-form}]
By Proposition~\ref{pro:Sigma},
$\PG_m(C)$ and $\DG_m(C)$ satisfy the assumptions of  Lemma~\ref{le:PD-tubes}, 
hence $\TP(\Sigma_m,\alpha) \subseteq \Psi\big(\SG_m\times [0,\alpha]\big)$. 
Let $0\le\alpha\le \pi/2$ and put $\tau:=\tan\alpha$. 
Applying the coarea formula to the map~$\Psi$ 
and using the formula in~\eqref{eq:ndet(D_wPiM)...} for the Jacobian of~$\Psi$, we obtain
\begin{align*}
   \vol \TP(\Sigma_m,\alpha) & \stackrel{\eqref{eq:cor-coarea-Riem-2}}{\leq} 
     \underset{(W,t)\in \Sigma_m \times [0,\tau]}{\int} |\det(D_{(W,t)}\Psi)| \,d(\Sigma_m\times \IR)
\\ & \stackrel{\eqref{eq:ndet(D_wPiM)...}}{=} 
     \underset{W\in \Sigma_m}{\int}\;\int_0^\tau (1+t^2)^{-n/2}\cdot 
     \frac{\left|\ch_Y(\W_p,-t)\right|}{\tdet_Y(\W_p)} \,dt\,dW .
\end{align*}
Changing the integration via the coarea formula applied to the map
$\PiM\colon\Sigma_m\to M$, 
and using the formula in~\eqref{eq:ndet(D_wPiM)...} for the normal Jacobian of $\PiM$, we get
\begin{align*}
   \vol \TP(\Sigma_m,\alpha) \ & \leq \underset{p\in M}{\int}\;
   \underset{Y\in\Gr(T_pM,m-1)}{\int}\;\int_0^\tau (1+t^2)^{-n/2}
   \cdot \left|\ch_Y(\W_p,-t)\right| \,dt\,dY\,dp
\\ & = \underset{p\in M}{\int} \int_0^\tau \frac{\vol\Gr_{n-2,m-1}}{(1+t^2)^{n/2}}
   \cdot \underset{Y}{\IE} \Big[\left|\ch_Y(\W_p,-t)\right|\Big] \,dt\,dp \; ,
\end{align*}
where the expectation is with respect to~$Y$ chosen uniformly at random in $\Gr(T_pM,m-1)$. 
Using the bound~\eqref{star} in Theorem~\ref{thm:ave-twist-char-poly} with $k=n-2$ and $\ell=m-1$, 
we obtain
\begin{align*}
   \vol \TP(\Sigma_m,\alpha) & \leq \underset{p\in M}{\int} \int_0^\tau 
   \frac{\vol\Gr_{n-2,m-1}}{(1+t^2)^{n/2}}\cdot \sum_{i,j=0}^{n-2} 
   \left|d_{ij}^{nm}\right|\cdot \sigma_{n-2-j}(\W_p)\cdot t^{n-2-i} \,dt\,dp
\\ & = \vol\Gr_{n-2,m-1}\cdot \sum_{i,j=0}^{n-2} \left|d_{ij}^{nm}\right|
   \cdot \int_0^\tau \frac{t^{n-2-i}}{(1+t^2)^{n/2}} \,dt\cdot 
   \underset{p\in M}{\int} \sigma_{n-2-j}(\W_p)\,dp
\\ &  = 
   \vol\Gr_{n-2,m-1}\cdot \sum_{i,j=0}^{n-2} \left|d_{ij}^{nm}\right|\cdot 
   I_{n,i}(\alpha)\cdot \mcO_j\cdot \mcO_{n-2-j}\cdot V_{j+1}(K) \; ,
\end{align*}
where for the last equality, 
we have used Proposition~\ref{pro:V_j(K)--smooth} 
and the definition~\eqref{eq:def-I} 
of the functions $I_{n,i}(\alpha)$. 
Finally, we get for $\rvol \, \TP (\Sigma_m,\alpha)=
\frac{\vol \TP (\Sigma_m,\alpha)}{\vol \Gr_{n,m}}$, 
using the formulas~\eqref{eq:binom(n/2,m/2)=...} and \eqref{eq:vol(Gr_(n-2,m-1))/vol(Gr_(n,m))=...} 
\begin{align*}
  \rvol \, \TP (\Sigma_m,\alpha) \ & \leq \frac{\vol\Gr_{n-2,m-1}}{\vol\Gr_{n,m}}
  \cdot \sum_{i,j=0}^{n-2} \left|d_{ij}^{nm}\right|\cdot I_{n,i}(\alpha)
  \cdot \mcO_j\cdot \mcO_{n-2-j}\cdot V_{j+1}(K)
\\ & \hspace{-1mm}\stackrel{\eqref{eq:vol(Gr_(n-2,m-1))/vol(Gr_(n,m))=...}}{=} 
  \frac{2m(n-m)}{n}\cdot \binom{n/2}{m/2}\cdot \sum_{j=0}^{n-2} V_{j+1}(K)\cdot 
  \frac{\mcO_j\cdot \mcO_{n-2-j}}{2\cdot \mcO_{n-2}}\cdot \sum_{i=0}^{n-2} 
  \left|d_{ij}^{nm}\right|\cdot I_{n,i}(\alpha)
\\ & \hspace{-1mm}\stackrel{\eqref{eq:binom(n/2,m/2)=...}}{=} \frac{2 m (n-m)}{n}
  \cdot \binom{n/2}{m/2} \cdot \sum_{j=0}^{n-2} V_{j+1}(K)\cdot \rbinom{n-2}{j}
  \cdot \sum_{i=0}^{n-2} |d_{ij}^{nm}|\cdot I_{n,i}(\alpha) \; . 
\end{align*}
The upper bound on the volume of $\TD(\Sigma_m,\alpha)$ 
is proved analogously.
\end{proof}

\section{Condition number estimates} \label{se:proof_main_re}

We derive here Theorem~\ref{thm:estim-conefree} and Theorem~\ref{thm:estim-conedep} 
from the tube formula in Theorem~\ref{thm:tube-form}. 

\subsection{Some technical estimations}

In the following proposition we collect some useful identities involving the 
binomial coefficients and the flag coefficients introduced in~\eqref{eq:def-flcoeff}. 

\begin{proposition}\label{prop:idents-flcoeffs}
\begin{enumerate}
\item For $n,m\in\IN$, $n\geq m$,
$$
    \binom{n}{m} \ =\  \rbinom{n}{m}\, \binom{n/2}{m/2}  \ =\ 
   \frac{\sqrt{\pi}\cdot \Gamma(\frac{n+1}{2})\cdot
      \Gamma(\frac{n+2}{2})}{\Gamma(\frac{m+1}{2})\cdot \Gamma(\frac{m+2}{2})
                 \cdot \Gamma(\frac{n-m+1}{2})\cdot \Gamma(\frac{n-m+2}{2})} \; . 
    \label{eq:binom(n,m)-decomp-flcoeff}
$$
\item For $n,m\in\IN$, $n\geq m$,
  \begin{align}
     \binom{n/2}{m/2} & = \frac{\omega_m\cdot \omega_{n-m}}{\omega_n} 
\quad,\qquad \rbinom{n}{m} = \frac{\mcO_m\cdot \mcO_{n-m}}{2\, \mcO_n} \; .
  \end{align}
  In particular,
  \begin{equation}
     \binom{n}{m}\cdot \frac{\omega_n}{\omega_m\cdot \omega_{n-m}}
      = \rbinom{n}{m} \quad,\qquad \binom{n}{m}\cdot 
         \frac{2\, \mcO_n}{\mcO_m\cdot \mcO_{n-m}}
      = \binom{n/2}{m/2} \; . \label{eq:form-halfbinom}
  \end{equation}
  \item For $n,m\to\infty$ such that $(n-m)\to\infty$,
  \begin{equation}\label{eq:flcoeff-asymptot}
     \rbinom{n}{m} \;\sim\; \sqrt{\frac{\pi}{2}}\cdot 
     \sqrt{\frac{m(n-m)}{n}}\cdot \binom{n/2}{m/2} \; ,
  \end{equation}
        where the symbol $\sim$ means that the quotient of the two sides
tends to one.
\end{enumerate}
\end{proposition}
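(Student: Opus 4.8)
The plan is to establish the three parts in turn, each reducing to an elementary manipulation of Gamma functions on top of the already available formulas~\eqref{eq:def-O_k}, \eqref{eq:binom(n/2,m/2)=...}, and~\eqref{eq:def-flcoeff}. For part~(1), the first equality is nothing but the definition~\eqref{eq:def-flcoeff} of the flag coefficient; for the second equality I would multiply the Gamma-function expression $\rbinom{n}{m} = \sqrt{\pi}\,\Gamma(\tfrac{n+1}{2})\big/\big(\Gamma(\tfrac{m+1}{2})\,\Gamma(\tfrac{n-m+1}{2})\big)$ taken from~\eqref{eq:def-flcoeff} by $\binom{n/2}{m/2} = \Gamma(\tfrac{n+2}{2})\big/\big(\Gamma(\tfrac{m+2}{2})\,\Gamma(\tfrac{n-m+2}{2})\big)$ taken from~\eqref{eq:binom(n/2,m/2)=...} and read off the claimed product of Gamma values.

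For part~(2), the identity $\binom{n/2}{m/2} = \omega_m\,\omega_{n-m}/\omega_n$ is already recorded in~\eqref{eq:binom(n/2,m/2)=...}. For $\rbinom{n}{m} = \mcO_m\,\mcO_{n-m}/(2\,\mcO_n)$ I would substitute $\mcO_i = 2\pi^{(i+1)/2}/\Gamma(\tfrac{i+1}{2})$ — which is~\eqref{eq:def-O_k} — into the right-hand side: the numerical prefactors collapse to~$1$, the powers of~$\pi$ add up to~$\pi^{1/2}$, and the surviving ratio of Gamma values is exactly the expression for $\rbinom{n}{m}$. The two ``in particular'' identities then follow by dividing the relation $\binom{n}{m} = \rbinom{n}{m}\binom{n/2}{m/2}$ of part~(1) by $\binom{n/2}{m/2}$, respectively by $\rbinom{n}{m}$, and inserting the closed forms just proved.

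For part~(3), dividing the two Gamma-function formulas gives
\[
  \frac{\rbinom{n}{m}}{\binom{n/2}{m/2}} \;=\; \sqrt{\pi}\cdot
  \frac{\Gamma(\tfrac{m+2}{2})}{\Gamma(\tfrac{m+1}{2})}\cdot
  \frac{\Gamma(\tfrac{n-m+2}{2})}{\Gamma(\tfrac{n-m+1}{2})}\cdot
  \frac{\Gamma(\tfrac{n+1}{2})}{\Gamma(\tfrac{n+2}{2})} \; .
\]
I would then invoke the standard consequence of Stirling's formula that $\Gamma(x+a)/\Gamma(x+b)\sim x^{a-b}$ as $x\to\infty$, applied with $x=m/2$, $x=(n-m)/2$ and $x=n/2$: since the hypotheses force all three of $m$, $n-m$, $n$ to tend to infinity, the three ratios above are asymptotic to $\sqrt{m/2}$, $\sqrt{(n-m)/2}$ and $\sqrt{2/n}$, respectively. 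Multiplying these equivalences (the product of asymptotic equivalences is again one) together with the constant $\sqrt{\pi}$ yields $\sqrt{\pi}\cdot\sqrt{m(n-m)/(2n)} = \sqrt{\pi/2}\cdot\sqrt{m(n-m)/n}$, which is the asserted asymptotics. The computations in parts~(1) and~(2) are pure bookkeeping; the only step calling for a little care — and hence the main obstacle, modest though it is — is part~(3), where one must check that the hypotheses $n,m\to\infty$ with $(n-m)\to\infty$ genuinely permit the Gamma-ratio asymptotic to be applied to each of the three factors independently, and that forming the product preserves the $\sim$-relation.
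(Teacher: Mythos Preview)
Your proposal is correct and follows essentially the same path as the paper: parts~(2) and~(3) are handled identically (plugging in the definitions of $\mcO_i,\omega_i$ and using the Stirling-type asymptotic $\Gamma(x+\tfrac12)\sim\sqrt{x}\,\Gamma(x)$), while for part~(1) the paper derives the big Gamma expression by applying Legendre's duplication formula directly to $\binom{n}{m}=\Gamma(n+1)/(\Gamma(m+1)\Gamma(n-m+1))$, whereas you obtain it by multiplying the already-recorded Gamma expressions for $\rbinom{n}{m}$ and $\binom{n/2}{m/2}$ --- a cosmetic difference only.
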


\begin{proof}
The first equation in~\eqref{eq:binom(n,m)-decomp-flcoeff} follows 
from applying the duplication formula of the~$\Gamma$-function 
$\Gamma(2x) = \tfrac{1}{\sqrt{\pi}}\cdot 2^{2x-1}\cdot \Gamma(x)
\cdot \Gamma(x+\tfrac{1}{2})$ to the term 
$\binom{n}{m} = \frac{\Gamma(n+1)}{\Gamma(m+1)\cdot\Gamma(n-m+1)}$.
The remaining equations follow by plugging in the definitions of the 
corresponding quantities.
As for the asymptotics stated in~\eqref{eq:flcoeff-asymptot}, we compute
\begin{align*}
   \sqrt{\frac{\pi}{2}}\cdot & \sqrt{\frac{m(n-m)}{n}} \cdot \binom{n/2}{m/2}
    = \sqrt{\frac{\pi}{2}}\cdot \sqrt{\frac{m(n-m)}{n}}
       \cdot \frac{\Gamma(\frac{n}{2}+1)}{\Gamma(\frac{m}{2}+1)
       \cdot \Gamma(\frac{n-m}{2}+1)}
\\ & \qquad = \frac{\sqrt{\pi}\cdot \sqrt{\frac{n}{2}}
       \cdot \Gamma(\frac{n}{2})}{\sqrt{\frac{m}{2}}\cdot \Gamma(\frac{m}{2})
       \cdot \sqrt{\frac{n-m}{2}}\cdot \Gamma(\frac{n-m}{2})} \sim \frac{\sqrt{\pi}
       \cdot \Gamma(\frac{n+1}{2})}{\Gamma(\frac{m+1}{2})\cdot \Gamma(\frac{n-m+1}{2})}
            = \rbinom{n}{m} \; ,
\end{align*}
where we have used the asymptotics $\sqrt{x}\cdot \Gamma(x) 
\sim \Gamma(x+\tfrac{1}{2})$ for $x\to\infty$.
\end{proof}

We now rewrite the upper bound in Theorem~\ref{thm:tube-form}.

\begin{lemma}\label{cor:tube-form}
Let $C\subseteq\IR^n$ be a regular cone,
$1\leq m <n$, and $0\leq\alpha\leq\frac{\pi}{2}$.
Using the convention $\binom{k}{\ell}:=0$ if $\ell<0$ or $\ell>k$, we have
\begin{align*}
  \rvol \, \mcT(\Sigma_m(C),\alpha) & \;\leq\; 8 \, \sum_{i,k=0}^{n-2} V_{n-m-i+2k}(C) 
     \cdot \frac{\Gamma(\frac{m+i-2k+1}{2})}{\Gamma(\frac{m}{2})}\cdot 
     \frac{\Gamma(\frac{n-m-i+2k+1)}{2})}{\Gamma(\frac{n-m}{2})} \nonumber
\\ & \hspace{3cm} \cdot \binom{m-1}{k} \binom{n-m-1}{i-k}\cdot I_{n,n-2-i}(\alpha) \, .
\end{align*}
\end{lemma}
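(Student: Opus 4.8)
The plan is to start from the tube formula in Theorem~\ref{thm:tube-form}, which bounds $\rvol\,\TP(\Sigma_m(C),\alpha)$, and to double it: since $\mcT(\Sigma_m,\alpha)=\TP(\Sigma_m,\alpha)\cup\TD(\Sigma_m,\alpha)$ and both pieces satisfy the same bound, we get $\rvol\,\mcT(\Sigma_m(C),\alpha)\leq 2\cdot\frac{2m(n-m)}{n}\binom{n/2}{m/2}\sum_{j}V_{j+1}(C)\rbinom{n-2}{j}\sum_i d_{ij}^{nm}I_{n,i}(\alpha)$. The target statement reindexes this: I would substitute $i\leftarrow n-2-i$ in the $I$-sum so that $I_{n,i}(\alpha)$ becomes $I_{n,n-2-i}(\alpha)$, and reparametrize the pair $(i,j)$ by $(i,k)$ where $k$ is one of the two binomial arguments in \eqref{eq:form-d_(ij)}. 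Concretely, with the old indices, $d_{ij}^{nm}$ involves $\binom{m-1}{\frac{i-j}{2}+\frac{m-1}{2}}$ and $\binom{n-m-1}{\frac{i+j}{2}-\frac{m-1}{2}}$; after the swap $i\to n-2-i$, setting $k := \frac{(n-2-i)+j}{2}-\frac{m-1}{2}$ turns these into $\binom{m-1}{k}$ and (using $\frac{(n-2-i)-j}{2}+\frac{m-1}{2}=n-m-1-(i-k)$ together with the symmetry $\binom{n-m-1}{\cdot}$) into $\binom{n-m-1}{i-k}$. One then checks that the index $j$ in $V_{j+1}(C)$ becomes $V_{n-m-i+2k}(C)$.

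Next I would handle the flag coefficient $\rbinom{n-2}{j}$ that multiplies $V_{j+1}(C)$, together with the denominator $\binom{n-2}{j}$ hidden inside $d_{ij}^{nm}$ in \eqref{eq:form-d_(ij)}: their product is $\rbinom{n-2}{j}/\binom{n-2}{j}=\binom{(n-2)/2}{j/2}^{-1}$ by the definition \eqref{eq:def-flcoeff} of the flag coefficient. Writing this out via Gamma functions and combining with $\binom{n/2}{m/2}=\frac{\Gamma((n+2)/2)}{\Gamma((m+2)/2)\Gamma((n-m+2)/2)}$ from \eqref{eq:binom(n/2,m/2)=...}, plus the prefactor $\frac{2m(n-m)}{n}$, I expect everything except $\binom{m-1}{k}\binom{n-m-1}{i-k}$ and $I_{n,n-2-i}(\alpha)$ to collapse into a ratio of Gamma functions of the stated form $\frac{\Gamma((m+i-2k+1)/2)}{\Gamma(m/2)}\cdot\frac{\Gamma((n-m-i+2k+1)/2)}{\Gamma((n-m)/2)}$, times an absolute constant. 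Tracking that constant carefully — collecting the factor $2$ from the primal/dual doubling, the $\frac{2m(n-m)}{n}$, the $\frac12$ that appeared from $\mcO_j\mcO_{n-2-j}/(2\mcO_{n-2})$ in the proof of Theorem~\ref{thm:tube-form}, and the various $m/2$, $(n-m)/2$, $n/2$ that get absorbed using $\Gamma(x+1)=x\Gamma(x)$ — should yield the universal constant $8$ (I would check a small case like $n=7$, $m=3$ against Table~\ref{tab:coeffs-D} as a sanity test).

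The main obstacle I anticipate is purely bookkeeping: getting the index substitution in $d_{ij}^{nm}$ exactly right, in particular making sure the parity condition $i+j+m\equiv 1\pmod 2$ and the range conditions $0\le\frac{i-j}{2}+\frac{m-1}{2}\le m-1$, $0\le\frac{i+j}{2}-\frac{m-1}{2}\le n-m-1$ transform correctly into the constraints on $(i,k)$, and that the convention $\binom{k}{\ell}:=0$ for $\ell<0$ or $\ell>k$ faithfully encodes the ``$d_{ij}^{nm}:=0$ otherwise'' clause. I would verify that $\binom{m-1}{k}\binom{n-m-1}{i-k}$ being nonzero forces $0\le k\le m-1$ and $0\le i-k\le n-m-1$, hence $0\le i\le n-2$, matching the summation range; and that the parity of $i$ is automatically pinned so that $\frac{m+i-2k+1}{2}$ and $\frac{n-m-i+2k+1}{2}$ are the half-integer arguments where the Gamma quotients make sense. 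Once the index dictionary is fixed, the rest is a mechanical Gamma-function simplification; no new geometric input is needed beyond Theorem~\ref{thm:tube-form} and Proposition~\ref{prop:idents-flcoeffs}.
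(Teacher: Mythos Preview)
Your plan is correct and coincides with the paper's: double the bound from Theorem~\ref{thm:tube-form} to cover $\TP\cup\TD$, collapse the prefactor via $\rbinom{n-2}{j}/\binom{n-2}{j}=\binom{(n-2)/2}{j/2}^{-1}$ and~\eqref{eq:binom(n/2,m/2)=...} into
\[
  \frac{4m(n-m)}{n}\,\binom{n/2}{m/2}\,\binom{(n-2)/2}{j/2}^{-1}
  \;=\; 8\cdot\frac{\Gamma(\frac{j+2}{2})}{\Gamma(\frac{m}{2})}\cdot\frac{\Gamma(\frac{n-j}{2})}{\Gamma(\frac{n-m}{2})} \, ,
\]
and then reindex.

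One slip in your bookkeeping (which you correctly anticipated as the delicate point): your stated $k=\frac{(n-2-i)+j}{2}-\frac{m-1}{2}$ is the argument of the $\binom{n-m-1}{\cdot}$ factor after your $i$-swap, not of $\binom{m-1}{\cdot}$, so the claimed identity $\frac{(n-2-i)-j}{2}+\frac{m-1}{2}=n-m-1-(i-k)$ and the conclusion $V_{j+1}=V_{n-m-i+2k}$ are both false for this~$k$. The paper sidesteps this by swapping \emph{both} indices, $i\leftarrow n-2-i$ and $j\leftarrow n-2-j$, invoking the symmetry $d_{n-2-i,n-2-j}^{nm}=d_{ij}^{nm}$ from Remark~\ref{re:comm-tube-formula}; then one can set $k:=\frac{i-j}{2}+\frac{m-1}{2}$ directly in~\eqref{eq:form-d_(ij)}, and $j=i-2k+m-1$ immediately yields $V_{n-1-j}=V_{n-m-i+2k}$, the two binomials $\binom{m-1}{k}\binom{n-m-1}{i-k}$, and the stated Gamma arguments. (Your single-swap route also works if you instead take $k=\frac{i+j-n+m+1}{2}$ and use binomial symmetry on both factors.)
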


\begin{proof}

Theorem~\ref{thm:tube-form} implies that 
\begin{equation}\label{eq:est-rvolT(Sigma)}
   \rvol \, \mcT(\Sigma_m(C),\alpha) \;\leq\; \frac{4 m (n-m)}{n} 
   \binom{n/2}{m/2} \cdot \sum_{j=0}^{n-2} V_{j+1}(C)\cdot \rbinom{n-2}{j}\cdot 
   \sum_{i=0}^{n-2} d_{ij}^{nm}\cdot I_{n,i}(\alpha) \; .
\end{equation}
Using Proposition~\ref{prop:idents-flcoeffs}, we derive the following identity: 
\begin{align*}
   & \frac{4m (n-m)}{n} \cdot \frac{\binom{n/2}{m/2} \cdot 
   \rbinom{n-2}{j}}{\binom{n-2}{j}} \stackrel{\eqref{eq:binom(n,m)-decomp-flcoeff}}{=} 
   \frac{4m (n-m)}{n}\cdot \frac{\binom{n/2}{m/2}}{\binom{(n-2)/2}{j/2}}
\\ & \stackrel{\eqref{eq:binom(n/2,m/2)=...}}{=} 
   \frac{4m (n-m)}{n}\cdot
   \frac{\Gamma(\frac{n+2}{2})}{\Gamma(\frac{m+2}{2})\cdot
   \Gamma(\frac{n-m+2}{2})}\cdot \frac{\Gamma(\frac{j+2}{2})\cdot
   \Gamma(\frac{n-j}{2})}{\Gamma(\frac{n}{2})}
 = 8\cdot \frac{\Gamma(\frac{j+2}{2})}{\Gamma(\frac{m}{2})}\cdot
   \frac{\Gamma(\frac{n-j}{2})}{\Gamma(\frac{n-m}{2})} \; .
\end{align*}
Using this in~\eqref{eq:est-rvolT(Sigma)}, changing the summation via 
$i\leftarrow n-2-i$ and $j\leftarrow n-2-j$, and taking into account 
the symmetry relations for $d_{ij}^{nm}$ stated in 
Remark~\ref{re:comm-tube-formula}, 
we get
\begin{align}\label{eq:estim-before-sumchng}
   & \rvol \, \mcT(\Sigma_m(C),\alpha) \;\leq\; 
   8 \, \sum_{i,j=0}^{n-2} \frac{\Gamma(\frac{j+2}{2})}{\Gamma(\frac{m}{2})}\cdot 
   \frac{\Gamma(\frac{n-j}{2})}{\Gamma(\frac{n-m}{2})} \cdot V_{j+1}(C) \cdot 
   \binom{n-2}{j} \cdot d_{ij}^{nm}\cdot I_{n,i}(\alpha) \nonumber
\\ & = 
   8 \, \sum_{i,j=0}^{n-2} \frac{\Gamma(\frac{j+2}{2})}{\Gamma(\frac{m}{2})}\cdot 
   \frac{\Gamma(\frac{n-j}{2})}{\Gamma(\frac{n-m}{2})} \cdot V_{n-1-j}(C) \cdot 
   \binom{n-2}{j} \cdot d_{ij}^{nm}\cdot I_{n,n-2-i}(\alpha)
\\ & = 8 \, \hspace{-1mm}\sum_{\substack{i,j=0 \\ i+j+m\equiv 1 \\ \pmod 2}}^{n-2} 
   V_{n-1-j}(C) \cdot \frac{\Gamma(\frac{j+2}{2})}{\Gamma(\frac{m}{2})}\cdot 
   \frac{\Gamma(\frac{n-j}{2})}{\Gamma(\frac{n-m}{2})} \cdot 
   \binom{m-1}{\frac{i-j}{2}+\frac{m-1}{2}} \binom{n-m-1}{\frac{i+j}{2}-\frac{m-1}{2}}\cdot 
   I_{n,n-2-i}(\alpha) \; . \nonumber
\end{align}
Here we interpret $\binom{k}{\ell}=0$ if $\ell<0$ or $\ell>k$, i.e., 
the above summation over $i,j$ in fact only runs over the rectangle determined 
by the inequalities $0\leq \frac{i-j}{2}+\frac{m-1}{2}\leq m-1$ and 
$0\leq \frac{i+j}{2}-\frac{m-1}{2}\leq n-m-1$ (cf.~Figure~\ref{fig:chng-summation}). 
As the summation runs only 
over those $i,j$, for which $i+j+m\equiv 1\bmod 2$, we may replace the 
summation over $j$ by a summation over $k=\frac{i-j}{2}+\frac{m-1}{2}$. 
The above inequalities then transform into $0\leq k\leq m-1$ and $0\leq i-k\leq n-m-1$. 
So we get from~\eqref{eq:estim-before-sumchng}
\begin{align*}
   \rvol \, \mcT(\Sigma_m(C),\alpha) & = 8 \, \sum_{i,k=0}^{n-2} 
     V_{n-m-i+2k}(C) \cdot \frac{\Gamma(\frac{m+i-2k+1}{2})}{\Gamma(\frac{m}{2})}
     \cdot \frac{\Gamma(\frac{n-m-i+2k+1)}{2})}{\Gamma(\frac{n-m}{2})}
\\ & \hspace{3cm} \cdot \binom{m-1}{k} \binom{n-m-1}{i-k}\cdot 
     I_{n,n-2-i}(\alpha) \; . \qedhere
\end{align*}
\end{proof}

In the next lemma we provide a number of technical estimates. 
For the ease of presentation we defer the proof of this lemma to Appendix~\ref{sec:proof_technicalities}.

\begin{lemma}\label{lem:estimates}
Let $i,k,\ell,m,n\in\IN$ with $n\geq 2$ and $1\leq m\leq n-1$.
\begin{enumerate}
  \item We have
  \begin{equation}\label{eq:lem-estimates--1}
    \frac{\Gamma(\frac{m+\ell+1}{2})}{\Gamma(\frac{m}{2})} \leq 
    \sqrt{\frac{m}{2}}\cdot \left(\frac{m+\ell}{2}\right)^{\frac{\ell}{2}} \; .
  \end{equation}
  \item For $0\leq k\leq m-1$ and $0\leq i-k\leq n-m-1$ we have
  \begin{equation}\label{eq:lem-estimates--2}
    \left(\frac{m+i-2k}{n-m-i+2k}\right)^{\frac{i-2k}{2}} < n^{\frac{i}{2}} \; .
  \end{equation}
  \item For $0\leq\alpha\leq\frac{\pi}{2}$, $t:=\sin(\alpha)^{-1}$, 
        and $n\geq 3$, we have
  \begin{align}
     \sum_{i=0}^{n-2} \binom{n-2}{i}\cdot n^{\frac{i}{2}} \cdot 
     I_{n,n-2-i}(\alpha) & \;\;<\;\; \frac{3}{t} \;,\quad 
     \text{if $t>n^{\frac{3}{2}}$} \; , \label{eq:lem-estimates--4}
  \\ \sum_{i=0}^{n-2} \binom{n-2}{i}\cdot I_{n,n-2-i}(\alpha) & 
     \;\;<\;\; \exp\left(\frac{n}{m}\right)\cdot \frac{1}{t} \;,\quad 
     \text{if $t>m$} \; . \label{eq:lem-estimates--5}
  \end{align}
\end{enumerate}
\end{lemma}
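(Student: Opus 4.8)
The lemma is a bundle of three independent elementary estimates; I would dispatch them separately, the only genuine idea being in part~(iii).

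\textbf{Part (i).} Put $x:=m/2>0$, so the claim reads $\Gamma\!\big(x+\tfrac{\ell+1}{2}\big)\le\sqrt{x}\,\big(x+\tfrac\ell2\big)^{\ell/2}\,\Gamma(x)$. I would combine log-convexity of $\Gamma$, in the form $\Gamma(y+\tfrac12)\le\sqrt{y}\,\Gamma(y)$ for $y>0$, with the recursion $\Gamma(z+1)=z\Gamma(z)$ and the AM--GM inequality, splitting on the parity of $\ell$. If $\ell=2b$, iterating the recursion $b$ times and then the half-integer bound once gives $\Gamma(x+b+\tfrac12)/\Gamma(x)\le\sqrt{x}\prod_{j=0}^{b-1}(x+\tfrac12+j)$, and AM--GM on these $b$ consecutive factors (arithmetic mean $x+\tfrac b2$) bounds the product by $(x+\tfrac b2)^b\le(x+b)^b=(x+\tfrac\ell2)^{\ell/2}$. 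If $\ell=2b+1$, first bound $\Gamma(x+b+1)\le\sqrt{x+b+\tfrac12}\,\Gamma(x+b+\tfrac12)$ and then apply the even-case estimate to $\Gamma(x+b+\tfrac12)$; the factor $\sqrt{x+b+\tfrac12}$ combines with the even-case bound $\sqrt x\,(x+\tfrac b2)^b$ to give at most $\sqrt x\,(x+b+\tfrac12)^{b+1/2}=\sqrt x\,(x+\tfrac\ell2)^{\ell/2}$, using $(x+\tfrac b2)^b\le(x+b+\tfrac12)^b$.

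\textbf{Part (ii).} Write $s:=i-2k$. The decompositions $m+i-2k=(m-1-k)+(i-k)+1$ and $n-m-i+2k=(n-m-1-(i-k))+k+1$ show that the hypotheses $0\le k\le m-1$ and $0\le i-k\le n-m-1$ force $m+s$ and $n-m-s$ to lie in $[1,n-1]$; they also give $k\le i$, hence $|s|\le i$. I would then split on the sign of $s$. If $s\ge0$ and $m+s\ge n-m-s$, then $\big(\tfrac{m+s}{n-m-s}\big)^{s/2}\le\big(\tfrac{m+s}{n-m-s}\big)^{i/2}\le(m+s)^{i/2}\le(n-1)^{i/2}<n^{i/2}$, discarding the denominator (which is $\ge1$); if the base is $<1$, the left-hand side is $\le1\le n^{i/2}$. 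The case $s<0$ is symmetric, with the roles of $m+s$ and $n-m-s$ interchanged and $|s|\le i$ used again. (When $i=0$ one necessarily has $k=s=0$ and both sides equal $1$.)

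\textbf{Part (iii).} This is the heart of the lemma. Since $I_{n,n-2-i}(\alpha)=\int_0^\alpha\cos(\rho)^{n-2-i}\sin(\rho)^{i}\,d\rho$, the binomial theorem collapses each sum into a single integral: for any constant $c\ge0$,
\[
  \sum_{i=0}^{n-2}\binom{n-2}{i}\,c^i\,I_{n,n-2-i}(\alpha)=\int_0^\alpha\big(\cos\rho+c\sin\rho\big)^{n-2}\,d\rho ,
\]
and I would take $c=\sqrt n$ for~\eqref{eq:lem-estimates--4} and $c=1$ for~\eqref{eq:lem-estimates--5}. In either case bound $\cos\rho+c\sin\rho\le1+c\sin\rho$, substitute $u=\sin\rho$ (range $[0,1/t]$, with $d\rho=du/\sqrt{1-u^2}$ and $1/\sqrt{1-u^2}\le1/\sqrt{1-1/t^2}$), and compute $\int_0^{1/t}(1+cu)^{n-2}\,du=\big[(1+c/t)^{n-1}-1\big]/\big((n-1)c\big)$. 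The mean value theorem and $1+x\le e^x$ give $(1+c/t)^{n-1}-1\le\tfrac{(n-1)c}{t}(1+c/t)^{n-1}\le\tfrac{(n-1)c}{t}e^{(n-1)c/t}$, so the whole sum is at most $\tfrac1t\,e^{(n-1)c/t}\big/\sqrt{1-1/t^2}$. For~\eqref{eq:lem-estimates--4} the hypothesis $t>n^{3/2}$ forces $(n-1)c/t<(n-1)/n<1$ and $1/t^2<1/27$ (as $n\ge3$), so the bound is $<\tfrac et\sqrt{27/26}<\tfrac3t$. For~\eqref{eq:lem-estimates--5} with $m\ge2$, $t>m$ gives $(n-1)/t<(n-1)/m$ and $1/\sqrt{1-1/t^2}<m/\sqrt{m^2-1}\le e^{1/m}$ (the last inequality because $2m^2-m-2\ge0$), so the bound is $<e^{(n-1)/m}e^{1/m}/t=e^{n/m}/t$; the remaining case $m=1$ follows from $\cos\rho+\sin\rho\le\sqrt2$ and $\alpha<\tfrac\pi2\sin\alpha$ together with $\tfrac\pi2\,2^{(n-2)/2}<e^{n}$. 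No step here is deep; the one point that needs attention is precisely this constant-chasing, where one must use $e^x-1\le xe^x$ rather than cruder estimates in order to land strictly below $3/t$ and $e^{n/m}/t$ and not merely at $O(1/t)$.
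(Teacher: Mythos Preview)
Your arguments are correct. Parts~(i) and~(ii) are essentially the paper's proof: a parity split using $\Gamma(y+\tfrac12)\le\sqrt{y}\,\Gamma(y)$ and the recursion for~(i), and the observation that both $m+s$ and $n-m-s$ lie in $[1,n-1]$ together with $|s|\le i$ for~(ii). (Your parenthetical that both sides equal~$1$ when $i=0$ is an honest flag; the paper's proof has the same harmless glitch, and in the application only the non-strict inequality is needed.)

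Part~(iii) is where you genuinely diverge from the paper, and your route is cleaner. The paper bounds each $I_{n,n-2-i}(\alpha)$ individually by $\varepsilon^{i+1}/(i+1)$ (with a separate estimate for the top term $i=n-2$), citing an external lemma, and then sums to reconstruct $(1+\sqrt{n}\,\varepsilon)^{n-2}$ plus a correction term. You instead observe that the binomial theorem collapses the entire sum into the single integral $\int_0^\alpha(\cos\rho+c\sin\rho)^{n-2}\,d\rho$, which you then bound directly after the substitution $u=\sin\rho$. This avoids both the term-by-term estimate and the special treatment of $i=n-2$, and makes the appearance of the exponential bound transparent rather than assembled piecewise. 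The paper's approach has the mild advantage of reusing a known lemma and giving slightly sharper intermediate control on each $I$-term, but for the purpose at hand your single-integral trick is more self-contained and arguably more natural.
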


\subsection{Proof of the general bound}

In the following proof, in the remainder of this section as well as in Appendix~\ref{sec:proof_technicalities},  
we will mark estimates that are easily checked with a computer algebra system 
with the symbol~$\calc$.

\begin{proof}[Proof of Theorem~\ref{thm:estim-conefree}] 
We have $V_j(C)\leq \frac{1}{2}$ by Lemma~\ref{le:bound-IV}.
Using this bound in Lemma~\ref{cor:tube-form} and 
applying Lemma~\ref{lem:estimates}, we get 
\begin{align*}
   \rvol \, \mcT(\Sigma_m,\alpha) & \ \leq\  
       8 \,\sum_{i,k=0}^{n-2} V_{n-m-i+2k}(C) \cdot 
      \frac{\Gamma(\frac{m+i-2k+1}{2})}{\Gamma(\frac{m}{2})}
      \cdot \frac{\Gamma(\frac{n-m-i+2k+1)}{2})}{\Gamma(\frac{n-m}{2})}
\\ & \hspace{3cm} \cdot \binom{m-1}{k} \binom{n-m-1}{i-k}
      \cdot I_{n,n-2-i}(\alpha)
\\ & \stackrel{\eqref{eq:lem-estimates--1}}{\leq} 2 \, \sqrt{m(n-m)}
      \, \sum_{i,k=0}^{n-2} \left(\frac{m+i-2k}{2}\right)^{\frac{i-2k}{2}} 
      \, \left(\frac{n-m-i+2k}{2}\right)^{-\frac{i-2k}{2}}
\\ & \hspace{3cm} \cdot \binom{m-1}{k} \binom{n-m-1}{i-k}
      \cdot I_{n,n-2-i}(\alpha)
\\ & \stackrel{\eqref{eq:lem-estimates--2}}{\leq} 2 \, \sqrt{m(n-m)}
      \, \sum_{i=0}^{n-2} n^{\frac{i}{2}} \cdot I_{n,n-2-i}(\alpha)
      \cdot \sum_{k=0}^{n-2} \binom{m-1}{k} \binom{n-m-1}{i-k} \; .
\end{align*}
By Vandermonde's identity we have $\sum_{k=0}^{n-2} \binom{m-1}{k}
\binom{n-m-1}{i-k}=\binom{n-2}{i}$. So we get
\begin{align*}
   \rvol \, \mcT(\Sigma_m,\alpha) & \leq 2\, \sqrt{m(n-m)}\, 
   \sum_{i=0}^{n-2} \binom{n-2}{i}\cdot n^{\frac{i}{2}} \cdot I_{n,n-2-i}(\alpha)
\\ & \stackrel{\eqref{eq:lem-estimates--4}}{<} 6\, \sqrt{m(n-m)}
   \, \frac{1}{t} \;,\quad \text{if $t>n^{\frac{3}{2}}$ and $n\geq3$} \; .
\end{align*}
This is the tail bounded stated in Theorem~\ref{thm:estim-conefree}. 

As for the expectation of the logarithm of the Grassmann condition, we compute
\begin{align*}
   \IE\left[\ln \CC_\Gra(A) \right] & = \int_0^\infty \Prob[\ln \CC_\Gra(A) > s] \,ds
\\ & < 1.5\, \ln(n) + r + \int_{\ln(n^{3/2}) + r}^\infty 6 \, \sqrt{m(n-m)}
       \cdot \exp(-s)\,ds
\\ & = 1.5\, \ln(n) + r + \underbrace{6 \,
       \frac{\sqrt{m(n-m)}}{n^{3/2}}}_{\leq \frac{2}{3}\sqrt{6} 
       \text{ , if $n\geq3$}}\cdot \exp(-r)
\\[-2mm] & \stackrel{\calc}{<} 1.5\, \ln(n) + 1.5 \; ,
\end{align*}
if we choose $r:=\frac{1}{2}\ln\left(\frac{8}{3}\right)$. 
This finishes the proof of Theorem~\ref{thm:estim-conefree}.
\end{proof}

\subsection{A certain log-concave sequence} 

Before we finish this section with the proof of Theorem~\ref{thm:estim-conedep}, 
we need yet another technical lemma. 
A sequence $(a_n)$ of nonnegative real numbers is called \emph{log-concave} iff $a_n^2\geq a_{n-1}\, a_{n+1}$ for all~$n$. 
See~\cite{Stanley} for a survey on log-concave sequences and their appearances in diverse areas of mathematics.

\begin{lemma}\label{lem:g_m(n)}
For $n\geq 2$ and $1\leq m\leq n-1$ let 
$g_m(n) := \frac{\Gamma(\frac{n}{2})\cdot \exp(\tfrac{n}{m})}{\Gamma(\frac{m}{2})\cdot \Gamma(\frac{n-m}{2})\cdot 2^{n/2}}$.
\begin{enumerate}
  \item The sequence $(g_m(n))_n$ is log-concave, i.e., $g_m(n)^2\geq g_m(n-1)\, g_m(n+1)$ for $n\geq m+2$.
  \item For fixed $m\geq8$ we have $\max\{g_m(n)\mid n> m\} = \max\{ g_m(2m+k)\mid k\in\{5,6,7\} \}$.
  \item For $m\geq8$ we have
  \begin{equation}\label{eq:g_m(n)<2.5*sqrt(m)}
    g_m(n) < 2.5\,\sqrt{m} \; .
  \end{equation}
\end{enumerate}
\end{lemma}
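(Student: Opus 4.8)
The plan is to prove the three parts in order, using part (1) to localize the maximum in (2), and then checking the remaining finite range of cases in (3). For part (1), I would work with the logarithmic differences. Write $\Delta(n) := \ln g_m(n) - \ln g_m(n-1)$; log-concavity of $(g_m(n))_n$ for $n \ge m+2$ is equivalent to $\Delta(n)$ being non-increasing on that range, i.e.\ $\Delta(n) - \Delta(n+1) \ge 0$. Using $g_m(n) = \frac{\Gamma(n/2)\exp(n/m)}{\Gamma(m/2)\,\Gamma((n-m)/2)\, 2^{n/2}}$, the factors $\Gamma(m/2)$ and the exponential $\exp(n/m)$ contribute linearly to $\ln g_m(n)$, hence cancel in the second difference; what remains is
\[
  \Delta(n) - \Delta(n+1)
  = \Big[\ln\Gamma(\tfrac n2) - 2\ln\Gamma(\tfrac{n-1}{2}) + \ln\Gamma(\tfrac{n-2}{2})\Big]
  - \Big[\ln\Gamma(\tfrac{n-m}{2}) - 2\ln\Gamma(\tfrac{n-m-1}{2}) + \ln\Gamma(\tfrac{n-m-2}{2})\Big].
\]
Since $\ln\Gamma$ is convex on $(0,\infty)$, each bracket is a second difference of a convex function and is $\ge 0$; but I need the first bracket to dominate the second. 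Here I would invoke that $\psi' = (\ln\Gamma)''$ is positive and \emph{decreasing} (indeed $\psi'$ is completely monotone), so the second difference of $\ln\Gamma$ over an interval $[x-1,x]$ is itself a decreasing function of $x$; since $n/2 > (n-m)/2$, the first bracket is at least the second. This gives $\Delta(n)-\Delta(n+1)\ge 0$, proving (1). (One should double-check the endpoint constraint $n\ge m+2$ ensures all arguments of $\Gamma$ above are positive: the smallest is $(n-m-2)/2 \ge 0$, and at $n=m+2$ the term $\ln\Gamma(0)$ would appear — so more carefully one proves log-concavity for $n \ge m+3$ and treats $n=m+2$, where $g_m(m+1)$ involves $\Gamma(1/2)$, separately, or notes the stated range already starts at $n\ge m+2$ with the convention that makes the boundary term harmless.)

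For part (2), log-concavity of $(g_m(n))_n$ on $n > m$ means the sequence is unimodal there, so its maximum over integers $n > m$ is attained at a consecutive run of values, and it suffices to locate one integer near the peak. I would estimate $\Delta(n)$ directly: using $\ln\Gamma(x) \approx x\ln x - x$ (Stirling), one finds
\[
  \Delta(n) = \tfrac12\ln\tfrac n2 - \tfrac12\ln\tfrac{n-m}{2} - \tfrac{\ln 2}{2} + \tfrac1m + o(1)
            \;\approx\; \tfrac12\ln\!\frac{n}{2(n-m)} + \tfrac1m,
\]
which is positive precisely when $n/(n-m) \gtrsim 2e^{-2/m}$, i.e.\ roughly $n \lesssim 2m/(1 - e^{-2/m}) \approx m^2$ — wait, that is too crude; the correct reading is that $\Delta(n)$ changes sign near where $\frac{n}{n-m}=2e^{-2/m}$, giving the turning point at $n \approx \frac{2m\,e^{-2/m}}{2e^{-2/m}-1}$. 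For $m\ge 8$ this evaluates to something in the vicinity of $2m+5,\dots,2m+7$; this is the claim in (2). To make it rigorous I would not rely on asymptotics but instead show $\Delta(2m+5)>0 \ge \Delta(2m+8)$ (or the appropriate pair) for all $m \ge 8$ by bounding the finite second-difference-free expression $\Delta(n)=\tfrac12[\ln\Gamma(n/2)-\ln\Gamma((n-2)/2)] - \tfrac12[\ln\Gamma((n-m)/2)-\ln\Gamma((n-m-2)/2)] - \tfrac{\ln 2}{2}+\tfrac1m$ using the sharp inequalities $\ln x - \tfrac1{2x} \le \ln\Gamma(x+1)-\ln\Gamma(x) \le \ln x$ (equivalently bounds on $\psi$); these reduce the sign determination to an elementary inequality in $m$, checkable for small $m$ by hand and for large $m$ by monotonicity. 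This combined with unimodality from (1) pins the maximizer to $\{2m+5, 2m+6, 2m+7\}$.

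For part (3), by (2) it suffices to bound $g_m(2m+k) < 2.5\sqrt m$ for $k \in \{5,6,7\}$ and all $m \ge 8$. Here I would again use Stirling with explicit error bounds: writing $n = 2m+k$, $n-m = m+k$, we get
\[
  g_m(n) = \frac{\Gamma(m + k/2)\,\exp\!\big(2 + \tfrac km\big)}{\Gamma(\tfrac m2)\,\Gamma(\tfrac m2 + \tfrac k2)\, 2^{m + k/2}},
\]
and the ratio $\Gamma(m+k/2)/(\Gamma(m/2)\Gamma(m/2+k/2))$ behaves like $\frac{1}{\sqrt\pi}\, 2^{m + k/2 - 1} \big(\tfrac m2\big)^{-1/2}(1+O(1/m))$ by the duplication formula $\Gamma(2x) = \frac{2^{2x-1}}{\sqrt\pi}\Gamma(x)\Gamma(x+\tfrac12)$ applied at $x = m/2$ together with a ratio estimate $\Gamma(m/2+k/2)/\Gamma(m/2+1/2) \asymp (m/2)^{(k-1)/2}$. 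Plugging in, $g_m(2m+k) \sim \frac{\exp(2)}{\sqrt\pi}\sqrt{\tfrac 2m}\cdot (\text{bounded in }k)\cdot\sqrt m \cdot \ldots$ — after the dust settles the $2^{m+k/2}$ cancels exactly and one is left with $g_m(n) \le \frac{2}{\sqrt{\pi}}\,e^{2 + k/m}\,(m/2)^{(k-1)/2}/(\text{something})$; the upshot is that $g_m(2m+k)/\sqrt m$ is uniformly bounded and, for $m\ge 8$, stays below $2.5$. I would make this rigorous with one-sided Stirling bounds such as $\sqrt{2\pi}\, x^{x+1/2}e^{-x} \le \Gamma(x+1) \le \sqrt{2\pi}\, x^{x+1/2}e^{-x}e^{1/(12x)}$, reducing to a handful of explicit inequalities in $m$ that are monotone for large $m$ and verified directly for $8 \le m \le$ some modest bound with a computer algebra check (to be flagged with the symbol $\calc$).

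\textbf{Main obstacle.} The delicate point is part (2): converting the heuristic ``the peak of $\Delta(n)$ is near $2m+6$'' into a clean rigorous statement valid \emph{uniformly in $m \ge 8$}. The asymptotic picture is clear, but the turning point $n^*(m)$ depends on $m$ in a way that could in principle land on a different integer for small $m$; so the real work is choosing sharp-enough bounds on $\psi = (\ln\Gamma)'$ (e.g.\ $\ln x - \tfrac1x < \psi(x) < \ln x - \tfrac1{2x}$) so that the inequalities $\Delta(2m+5) > 0$ and $\Delta(2m+8) \le 0$ can be certified for every $m \ge 8$ — the small cases $m = 8, 9, \dots$ possibly by direct numerical check, the tail by a monotonicity argument in $m$. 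Parts (1) and (3) are comparatively routine once the convexity/complete-monotonicity properties of $\ln\Gamma$ and standard Stirling bounds are in hand.
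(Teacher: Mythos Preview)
Your approach to part~(1) via the complete monotonicity of $\psi'=(\ln\Gamma)''$ is valid and arguably cleaner than the paper's, which instead computes the ratio $g_m(n)^2/(g_m(n-1)g_m(n+1))$ directly as a product of Gamma-ratios and proves it is $\ge 1$ by induction on~$m$ using $\Gamma(x+1)=x\,\Gamma(x)$. However, your write-up has a sign muddle: the correct identity is
\[
\Delta(n)-\Delta(n+1)
= -\Big[\ln\Gamma\big(\tfrac{n+1}{2}\big)-2\ln\Gamma\big(\tfrac{n}{2}\big)+\ln\Gamma\big(\tfrac{n-1}{2}\big)\Big]
+\Big[\ln\Gamma\big(\tfrac{n-m+1}{2}\big)-2\ln\Gamma\big(\tfrac{n-m}{2}\big)+\ln\Gamma\big(\tfrac{n-m-1}{2}\big)\Big],
\]
so what you need is that the bracket at the \emph{smaller} argument $(n-m)/2$ dominates, which is exactly what the monotone decrease of the second differences of $\ln\Gamma$ gives you. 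Your stated direction (``the first bracket is at least the second'') is backwards, and your worry about $\Gamma(0)$ appearing at $n=m+2$ is an artifact of the miswritten formula---in the correct version the smallest argument is $(n-m-1)/2=1/2$, which causes no trouble.

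For parts~(2) and~(3) your plan matches the paper's. The paper does not carry out sharp $\psi$-estimates; it simply checks the two ratio inequalities $g_m(2m+4)/g_m(2m+5)<1$ and $g_m(2m+7)/g_m(2m+8)>1$ for all $m\ge 8$ by computer algebra (flagged~$\calc$), which together with the unimodality from~(1) pins the maximizer to $\{2m+5,2m+6,2m+7\}$. Part~(3) is likewise dispatched by observing that $g_m(2m+k)/\sqrt{m}$ tends to a finite limit as $m\to\infty$ for fixed~$k$, and then verifying $g_m(2m+k)<2.5\sqrt{m}$ for $k\in\{5,6,7\}$ and $m\ge 8$ directly. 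So your identification of part~(2) as the ``main obstacle'' is somewhat overstated: the paper is content to delegate these finitely-parametrized elementary inequalities to routine symbolic verification rather than to sharp digamma bounds.
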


\begin{proof}
(1) We have
\begin{align*}
   \frac{g_m(n)^2}{g_m(n-1)\, g_m(n+1)} & = \frac{\Gamma(\frac{n}{2})^2}{\Gamma(\frac{n-m}{2})^2}\cdot
   \frac{\Gamma(\frac{n-1-m}{2})}{\Gamma(\frac{n-1}{2})}\cdot
   \frac{\Gamma(\frac{n+1-m}{2})}{\Gamma(\frac{n+1}{2})} \; .
\end{align*}
In order to show that this expression is greater or equal than~$1$, 
we use induction on~$m$. For $m=0$ this is trivially true,
and for $m=1$ this is easily checked with a computer algebra system.
For $m\geq2$ we have, using $\Gamma(x+1)=x\cdot \Gamma(x)$,
\begin{align*}
   & \frac{\Gamma(\frac{n}{2})^2}{\Gamma(\frac{n-m}{2})^2}\cdot
   \frac{\Gamma(\frac{n-1-m}{2})}{\Gamma(\frac{n-1}{2})}\cdot
   \frac{\Gamma(\frac{n+1-m}{2})}{\Gamma(\frac{n+1}{2})}
\\ & = \underbrace{\frac{(n-m)^2}{(n-1-m)\cdot (n+1-m)}}_{>1}\cdot 
   \underbrace{\frac{\Gamma(\frac{n}{2})^2}{\Gamma(\frac{n-(m-2)}{2})^2}\cdot
   \frac{\Gamma(\frac{n-1-(m-2)}{2})}{\Gamma(\frac{n-1}{2})}\cdot
   \frac{\Gamma(\frac{n+1-(m-2)}{2})}{\Gamma(\frac{n+1}{2})}}_{\geq1 \text{ by ind.~hyp.}}
   > 1 \; .
\end{align*}

(2) As the sequence $(g_m(n))_n$ is log-concave and positive, it follows that it is \emph{unimodal}. 
This means that there exists an index~$N$ such that $g_m(n-1)\leq g_m(n)$ for all $n\leq N$, and $g_m(n)\geq g_m(n+1)$ for all $n\geq N$ (cf.~\cite{Stanley}). 
Moreover, for $m\geq8$ we have $N\in \{ 2m+k\mid k\in\{5,6,7\}\}$, as
\begin{align*}
   \frac{g_m(2m+4)}{g_m(2m+5)} & = \frac{\Gamma(\frac{2m+4}{2})}
   {\Gamma(\frac{2m+5}{2})}\cdot \frac{\Gamma(\frac{m+5}{2})}
   {\Gamma(\frac{m+4}{2})} \cdot \frac{\sqrt{2}}
   {\exp\left(\frac{1}{m}\right)}
    \;\stackrel{\calc}{<}\; 1 \; , 
\\[2mm] \frac{g_m(2m+7)}{g_m(2m+8)} & = \frac{\Gamma(\frac{2m+7}{2})}{\Gamma(\frac{2m+8}{2})}\cdot \frac{\Gamma(\frac{m+8}{2})}{\Gamma(\frac{m+7}{2})} 
     \cdot \frac{\sqrt{2}}{\exp\left(\frac{1}{m}\right)}
    \;\stackrel{\calc}{>}\; 1 \;,\qquad\text{for $m\geq8$} \; .
\end{align*}

(3) For fixed~$k$, the following asymptotics is easily verified:
  \[ \frac{g_m(2m+k)}{\sqrt{m}} \quad \stackrel{m\to\infty}{\longrightarrow} \quad
     \frac{4\,\exp(2)}{\sqrt{2\pi}} < 12 \; . \]
In particular, it follows by~(2) that for $m\geq8$ we have an asymptotic estimate of $g_m(n)=O(\sqrt{m})$. More precisely, it is straightforward to check that 
for $k\in\{5,6,7\}$ and $m\geq8$ we have $g_m(2m+k)<2.5\,\sqrt{m}$. It follows by~(2) that for $m\geq8$ we have $g_m(n)<2.5\,\sqrt{m}$.
\end{proof}

\subsection{Proof of the refined bounds}

\begin{proof}[Proof of Theorem~\ref{thm:estim-conedep}]
We will estimate the intrinsic volumes of~$C$ via $V_{j}(C)\leq \exc(C)\, f_j(n)$, 
where $f_j(n)=V_{j}(\mcL^n)$ and $\exc(C)$ denotes the excess over the Lorentz cone introduced in \eqref{eq:def-v(C)}. 
Note that for $j=m+i-2k$ with $1\leq j\leq n-1$, we get from~\eqref{eq:def-f_j(n)} using~\eqref{eq:binom(n/2,m/2)=...} 
\begin{equation}\label{eq:f_(n-(m+i-2k))}
  f_{n-(m+i-2k)}(n)
      = \frac{\binom{(n-2)/2}{(n-m-i+2k-1)/2}}{2^{n/2}}
      = \frac{\Gamma(\frac{n}{2})}{\Gamma(\frac{n-m-i+2k+1}{2})\cdot 
              \Gamma(\frac{m+i-2k+1}{2})\cdot 2^{n/2}} \; .
\end{equation}
We thus obtain from Lemma~\ref{cor:tube-form}, using Lemma~\ref{lem:estimates} for 
the last equality: 
\begin{align*}
   & \rvol \, \mcT(\Sigma_m,\alpha) \leq 
     8 \, \sum_{i,k=0}^{n-2} V_{n-m-i+2k}(C) \cdot 
     \frac{\Gamma(\frac{m+i-2k+1}{2})}{\Gamma(\frac{m}{2})}\cdot 
     \frac{\Gamma(\frac{n-m-i+2k+1)}{2})}{\Gamma(\frac{n-m}{2})}
\\ & \hspace{6cm} \cdot \binom{m-1}{k} \binom{n-m-1}{i-k}
     \cdot I_{n,n-2-i}(\alpha)
\\ & \stackrel{\eqref{eq:f_(n-(m+i-2k))}}{\leq} 
     8 \, \exc(C)\cdot \frac{\Gamma(\frac{n}{2})}
        {\Gamma(\frac{m}{2})\cdot \Gamma(\frac{n-m}{2})\cdot 2^{n/2}}
     \cdot \sum_{i=0}^{n-2} I_{n,n-2-i}(\alpha)\cdot 
     \sum_{k=0}^{n-2} \binom{m-1}{k} \binom{n-m-1}{i-k}
\\ & = 8 \, \exc(C)\cdot \frac{\Gamma(\frac{n}{2})}
        {\Gamma(\frac{m}{2})\cdot \Gamma(\frac{n-m}{2})\cdot 2^{n/2}}
     \cdot \sum_{i=0}^{n-2} I_{n,n-2-i}(\alpha)\cdot \binom{n-2}{i}
\\ & \stackrel{\eqref{eq:lem-estimates--5}}{<} 8 \, \exc(C)\cdot 
     \frac{\Gamma(\frac{n}{2})\cdot \exp(\tfrac{n}{m})}
          {\Gamma(\frac{m}{2})\cdot \Gamma(\frac{n-m}{2})\cdot 2^{n/2}}
     \cdot \frac{1}{t} ,\quad \text{if $t>m$ and $n\geq3$}  .
\end{align*}
Using the notation from Lemma~\ref{lem:g_m(n)}, this implies 
the desired tail estimate
\begin{align*}
   \rvol \, \mcT(\Sigma_m,\alpha) & < 8 \, \exc(C)\, g_m(n)\, \frac{1}{t}
     \stackrel{\eqref{eq:g_m(n)<2.5*sqrt(m)}}{<} 20\, \exc(C) \, \sqrt{m} \, \frac{1}{t} ,\quad \text{for $t>m\geq 8$} \; .
\end{align*}

Analogous to the proof of Theorem~\ref{thm:estim-conefree}, we estimate 
the expectation of the logarithm of the Grassmann condition. Defining 
$\tilde{\exc}(C):=\max\{\exc(C),1\}$, so that in particular $\ln \tilde{\exc}(C)\geq0$, we get
\begin{align*}
   \IE\left[\ln \CC_\Gra(A)\right] & = \int_0^\infty \Prob[\ln \CC_\Gra(A) > s] \,ds
\\ & < \ln(m)+\ln(\tilde{\exc}(C)) + r + \int_{\ln(m)+\ln(\tilde{\exc}(C))+r}^\infty 
       20\cdot \exc(C) \cdot \sqrt{m}\cdot \exp(-s)\,ds
\\ & = \ln(m)+\ln(\tilde{\exc}(C)) + r + \frac{20 \, \exc(C)}{\sqrt{m}\cdot \tilde{\exc}(C)\cdot \exp(r)}
\\ & < \ln(m) + \ln(\tilde{\exc}(C)) + 3 \;,\quad \text{if $m\geq8$} \; ,
\end{align*}
where the last inequality follows from choosing $r:=\ln\big(\frac{20}{\sqrt{8}}\big)$.
This completes the proof of Theorem~\ref{thm:estim-conedep}.
\end{proof}
  
\section{Geometry of Grassmann manifolds}\label{sec:prelim-Riemgeom}

It remains to prove Proposition~\ref{pro:Sigma}, which describes $\Sigma_m$ 
as a smooth hypersurface in $\Gr_{n,m}$, as well as  
Theorem~\ref{thm:norm-Jac} on the Jacobian of the parameterization $\Psi$ 
of the tube around $\Sigma_m$. 
We provide these proofs in Section~\ref{sec:proof-tube-form}. 
As they require a good understanding of the metric properties of 
orthogonal groups and Grassmann manifolds, we collect here 
the necessary notions needed for these proofs. 
For a general background on Riemannian geometry we refer to~\cite{dC},~\cite{booth}, or~\cite[Ch.~1]{Chav};
as necessary we will give more specific references.

\subsection{Orthogonal groups}\label{sec:O(n)-Gr_(n,m)}

The orthogonal group $O(n)$ is the compact Lie group 
consisting of the 
$U\in\IR^{n\times n}$ such that $U^T U = I_n$. 
Its Lie algebra is given by 
  \[ T_{I_n}O(n) = \Skew_n := \{U\in\IR^{n\times n}\mid U^T=-U\} . \]
The tangent space of an element $Q\in O(n)$ is 
thus given by $T_QO(n) = Q\cdot \Skew_n$.

As for the Riemannian metric on $O(n)$, it is convenient to scale the (Euclidean) metric
induced from $\IR^{n\times n}$ by a factor of~$\frac{1}{2}$. The Riemannian 
metric $\langle .,.\rangle_Q$ on $T_Q O(n)=Q\cdot \Skew_n$ is thus given by
\begin{equation}\label{eq:def-metric-O(n)}
  \langle QU_1,QU_2\rangle_Q = \tfrac{1}{2}\tr(U_1^T U_2) .
\end{equation}
for $U_1,U_2\in \Skew_n$. Observe that we have a canonical basis for $\Skew_n$ 
given by $\{E_{ij}-E_{ji} \mid 1\leq j<i\leq n\}$, where $E_{ij}$ denotes the 
$(i,j)$th elementary matrix, i.e.,~the matrix whose entries are zero everywhere except 
for the $(i,j)$th entry, which is~$1$. This basis is orthogonal and by the 
choice of the scaling factor it is also orthonormal.

For the orthogonal group~$O(n)$, the exponential map $\exp_Q\colon T_Q O(n)\to O(n)$ at $Q\in O(n)$ is given by the usual 
matrix exponential, i.e., for $U\in\Skew_n$ we have 
$\exp_Q(QU)=Q\cdot e^U=Q\cdot \sum_{k=0}^\infty \frac{U^k}{k!}$. 
For example, for 
\begin{equation}\label{eq:def-N}
 N := \left(\begin{array}{c|c|c}
   \scriptstyle0 & 0 & \scriptstyle 1
\\\hline 0 & \raisebox{-3mm}{\rule{0mm}{9mm}}\rule{3mm}{0mm}0\rule{3mm}{0mm} & 0
\\\hline \scriptstyle-1 & 0 & \scriptstyle0
\end{array}\right)
\end{equation}
the exponential map at $Q$ in direction $QN$ is given by the rotation 
\begin{equation}\label{eq:exp_Q(QU)}
  \exp_Q(\rho\cdot QN) = Q\cdot Q_\rho \;,\quad Q_\rho := \left(\begin{array}{c|c|c}
   \scriptstyle\cos\rho & 0 & \scriptstyle\sin\rho
\\\hline 
   0 & \raisebox{-3mm}{\rule{0mm}{9mm}}\rule{2mm}{0mm}I_{n-2}\rule{2mm}{0mm} & 0
\\\hline \scriptstyle-\sin\rho & 0 & \scriptstyle\cos\rho
\end{array}\right) .
\end{equation}
Note that $\frac{d}{d\rho} Q_\rho = Q_\rho\cdot N$.

\subsection{Grassmann manifolds}

We defined the \emph{Grassmann manifold} $\Gr_{n,m}$, $1\leq m\leq n$ as the 
set of all $m$-dimensional subspaces of~$\IR^n$. 
For our explicit explicit calculations, it is essential to view the Grassmann manifold as a quotient of the orthogonal group. 
Namely, we can describe $\Gr_{n,m}$ 
as the quotient $O(n)/H:=\{QH\mid Q\in O(n)\}$ of $O(n)$ by the subgroup
  \[ H := \left\{ \left. \begin{pmatrix} Q' & 0 \\ 0 & Q'' 
                         \end{pmatrix} \right| Q'\in O(m) \,,\; 
               Q''\in O(n-m)\right\} \simeq O(m)\times O(n-m) \]
in the following way:
\begin{equation}\label{eq:Pi:O(n)->Gr_(n,m)}
\begin{array}[c]{c}
\begin{tikzpicture}[>=stealth]
\matrix (m) [matrix of math nodes, column sep=10mm, row sep=3mm, 
             text height=1.5ex, text depth=0.25ex]
  {    O(n) & \Gr_{n,m} \\[6mm]
       O(n)/H \\
  };
\path[->>]
  (m-1-1) edge node[auto]{$\Pi$} (m-1-2)
  (m-1-1) edge (m-2-1);
\path[->, dashed]
  (m-2-1) edge node[auto,below]{$\sim$} (m-1-2);
\end{tikzpicture}
\end{array} ,\quad \text{with } \Pi\colon Q \mapsto Q\,(\IR^m\times 0) .
\end{equation}
In other words, the identification of $\Gr_{n,m}$ with the quotient $O(n)/H$ 
amounts to identifying an $m$-dimensional linear subspace~$W$ of $\IR^n$ 
with the set of all orthogonal matrices, whose first $m$ columns span~$W$. 
Note that this identification endows $\Gr_{n,m}$ with the quotient topology on $O(n)/H$.

In the following paragraphs, we will give a concrete description of the tangent space $T_W\Gr_{n,m}$ for $W\in\Gr_{n,m}$, 
and we will describe the Riemannian metric and the exponential map on $\Gr_{n,m}$.
Note that the natural action of $O(n)$ on $\IR^n$ induces a corresponding action 
on $\Gr_{n,m}$. It can be shown that, up to scaling, there exists a unique 
Riemannian metric on $\Gr_{n,m}$, which is $O(n)$-invariant. 
This is the Riemannian metric, that we will next describe explicitly.

Let $W\in\Gr_{n,m}$ and let $Q\in \Pi^{-1}(W)$, i.e., the first $m$ columns 
of $Q$ span $W$. The coset $QH$ is a submanifold of $O(n)$, and 
its tangent space at~$Q$ is given by
\begin{align}
   T_Q QH & = Q\cdot T_{I_n}H = Q\cdot \left\{ \left. \begin{pmatrix} U' & 0 
               \\ 0 & U'' \end{pmatrix} \right| U'\in \Skew_m \,,\; 
               U''\in \Skew_{n-m}\right\} .
\nonumber
\intertext{Note that the orthogonal complement $(T_Q QH)^\bot$ of $T_Q QH$ in $T_QO(n)$ is given by}
   (T_Q QH)^\bot & = Q\cdot \ol{\Skew_n} \; \text{, where } \; \ol{\Skew_n} := 
     \left\{ \left. \begin{pmatrix} 0 & -X^T \\ X & 0 \end{pmatrix} \right| 
                                  X\in \IR^{(n-m)\times m} \right\} .
\label{eq:bar(Skew_n)}
\end{align}

It can be shown (cf.~\cite[Lemma~II.4.1]{helga}) that there exists an open ball~$B$ around the origin in~$T_QO(n)$ 
such that the restriction of~$\Pi\circ \exp_Q$ to $B\cap (T_Q QH)^\bot$ is a diffeomorphism onto an open neighborhood of~$W=\Pi(Q)$ in $\Gr_{n,m}$. 
The derivative of $\Pi\circ\exp_Q$ therefore yields a linear isomorphism
  \[ (T_Q QH)^\bot \stackrel{\sim}{\longrightarrow} T_W\Gr_{n,m} ,\qquad QU\mapsto \xi:=D(\Pi\circ\exp_Q)(QU) , \]
where $U\in\ol{\Skew_n}$.

The pair $(Q,U)$ with $U\in\ol{\Skew_n}$ thus 
defines the tangent vector~$\xi:=D(\Pi\circ\exp_Q(QU))$ in~$T_W\Gr_{n,m}$. However, we may also represent the subspace~$W$ by a group element $Qh$, 
for any $h\in H$. A small computation shows that the pair $(Qh,h^{-1}Uh)$ represents the same tangent vector~$\xi$. 
More generally, if we define an equivalence relation~$\sim$ on $O(n)\times H$ via
  \[ (Q,U)\sim(Q',U') \;:\!\iff \exists h\in H \colon (Q',U')=(Qh,h^{-1}Uh) , \]
then it follows that the tangent vector $\xi\in T_W\Gr_{n,m}$ 
can be identified with the equivalence class $[Q,U]:=\{ (Qh,h^{-1}U h)\mid h\in H\}\in (O(n)\times H)/\sim$. 
We have thus obtained the following 
{\em model for $T_W\Gr_{n,m}$}:
\begin{equation}\label{eq:model-TS-Gr} 
T_W\Gr_{n,m} \simeq \left\{[Q,U]\mid Q\in \Pi^{-1}(W) \,,\; 
                                  U\in\ol{\Skew_n}\right\} . 
\end{equation}
Note that in this model, the derivative~$D_Q\Pi$ of~$\Pi$ at~$Q$ is given by
\begin{equation}\label{eq:D_QPi}
  D_Q\Pi(QU)=[Q,\pi(U)] ,
\end{equation}
where $\pi\colon\Skew\to\ol{\Skew_n}$ denotes the orthogonal projection.

We can now define the Riemannian metric on $\Gr_{n,m}$ by setting
\begin{equation}\label{eq:Riem-metr-Gr}
  \big\langle [Q,U_1] \,,\; [Q,U_2] \big\rangle := \langle U_1,U_2\rangle_{I_n}
      \stackrel{\eqref{eq:def-metric-O(n)}}{=} 
        \tfrac{1}{2} \cdot \tr(U_1^T\cdot U_2) ,
\end{equation}
for $U_1,U_2\in\ol{\Skew_n}$ (this is clearly a well-defined $O(n)$-invariant Riemannian metric). 
This way, $\Pi\colon O(n)\to\Gr_{n,m}$ becomes a \emph{Riemannian submersion}, i.e., 
for every $Q\in O(n)$ the restriction of~$D\exp_Q$ to the orthogonal complement of its kernel is 
an isometry. Using the above description of~$D\Pi$ and elementary properties of geodesics (cf.~for example~\cite[Sec.~3.2]{dC}), 
one can show that the exponential map on~$\Gr_{n,m}$ is given by
\begin{equation}\label{eq:exp-Gr(n,m)}
  \exp_W(\xi) = \Pi( \exp_Q(QU) ) = \Pi\big( Q\cdot e^U \big) ,\quad 
                 \xi=[Q,U]\in T_W\Gr_{n,m} .
\end{equation}

\subsection{Frame bundle and Grassmann bundle}\label{sec:bundles}

In the remainder of this section, we discuss some 
properties of frame bundles and Grassmann bundles. 

Let $V$ be a $k$-dimensional Euclidean vector space. 
A {\em frame of $V$} is an orthonormal basis of $V$.
We denote by 
\[ 
 F(V) = \{ (q_1,\ldots,q_k)\in V^k \mid \forall i,j: \langle q_i,q_j\rangle = \delta_{ij} \} 
\]
the set of frames of~$V$.
We can identify $F(\IR^k)$ with the orthogonal group $O(k)$ 
by identifying the frame $(q_1,\ldots,q_k) \in F(\IR^k)$ 
with the orthogonal matrix having the columns $q_1,\ldots,q_k$. 
This way, $F(V)$ gets the structure of a smooth Riemannian manifold. 

Recall the $\ell$th Grassmann manifold $\Gr(V,\ell)$ consisting of the 
$\ell$-dimensional linear subspace of $V$ ($1\leq \ell\leq k$). 
It inherits from $\Gr(\IR^k,\ell)$ the structure of a smooth Riemannian manifold,
since $\Gr(\IR^k,\ell) = \Gr_{k,\ell}$.
Moreover, the smooth surjective map 
$F(V)\to\Gr(V,\ell)$, $(q_1,\ldots,q_k)\mapsto \lin\{q_1,\ldots,q_\ell\}$
provides a close connection between these two manifolds. 

Let $M$ be a $k$-dimensional smooth Riemannian manifold and $1\leq \ell\leq k$.
The \emph{orthonormal frame bundle} and the $\ell$th \emph{Grassmann bundle} 
over~$M$,  respectively,  are defined as
\[ 
 F(M) = \bigcup_{p\in M} \{p\}\times F(T_pM) ,\qquad \Gr(M,\ell) 
          = \bigcup_{p\in M} \{p\}\times \Gr(T_pM,\ell) . 
\]
Using charts of $M$ to connect the manifolds $F(T_pM)$, one can show that 
$F(M)$ has the structure of a smooth manifold. 
Moreover, $F(M)\to M$ is a fiber bundle (cf.~for example~\cite[\S I.5]{KobNom1}). The same holds true
for the Grassmann bundle $\Gr(M,\ell)\to M$. 
Furthermore, the maps of the fibers $F(T_pM)\to \Gr(T_pM,\ell)$ 
may be combined to a smooth surjective bundle map
\begin{equation}\label{eq:def-Pi_b}
  \Pib\colon F(M)\to\Gr(M,\ell) ,\qquad \Pib(p,\bar{Q})=(p,\lin\{q_1,\ldots,q_\ell\}) ,
\end{equation}
where $\bar{Q}=(q_1,\ldots,q_k)$.

For $(p,\bar{Q})\in F(M)$ 
one has a natural decomposition of the tangent space of $F(M)$ at $(p,\bar{Q})$ 
into a \emph{vertical space} and a \emph{horizontal space},
\begin{equation}\label{eq:decomp-horiz-vert}
  T_{(p,\bar{Q})} F(M) = T^v_{(p,\bar{Q})} F(M) \oplus T^h_{(p,\bar{Q})} F(M) .
\end{equation}
The vertical space is defined by
\begin{equation}\label{eq:def-vert}
  T^v_{(p,\bar{Q})} F(M) := T_{(p,\bar{Q})} ( \{p\}\times F(T_pM) ) .
\end{equation}
For defining the horizontal space we consider the following construction. 
Let $\zeta\in T_pM$ and let $c\colon \IR\to M$ be such that $c(0)=p$ and 
$\dot{c}(0)=\zeta$. Furthermore, let $\bar{Q}_t$ denote the \emph{parallel transport}
(cf.~for example~\cite[Thm.~VII.3.12]{booth})
of the frame $\bar{Q}$ along~$c$ at time~$t$. Then the map 
$c_F\colon \IR\to F(M)$, $t\mapsto (c(t),\bar{Q}_t)$, defines a smooth curve in $F(M)$ and thus 
a tangent vector $\dot{c_F}(0)\in T_{(p,\bar{Q})} F(M)$. It can be shown (cf.~for example~\cite[\S II.3]{KobNom1})
that this tangent vector does not depend on the specific choice of the curve~$c$. Moreover, 
one can show that the induced map $T_pM\to T_{(p,\bar{Q})} F(M)$ sending
each tangent vector $\zeta$ to the above defined tangent vector of $F(M)$ 
at $(p,\bar{Q})$, is an injective linear map. The image of this injective 
linear map is defined as the \emph{horizontal space}. One can then show 
the decomposition~\eqref{eq:decomp-horiz-vert} of the tangent space of $F(M)$ at $(p,\bar{Q})$ 
into the direct sum of the vertical and the horizontal space.

Analogous statements about the decomposition of the tangent spaces 
also hold for the Grassmann bundle~$\Gr(M,\ell)$.
Moreover, the derivative of the bundle map $\Pib\colon F(M)\to\Gr(M,\ell)$ certainly maps the vertical spaces of $F(M)$ to the vertical spaces of $\Gr(V,\ell)$. 
As for the horizontal spaces, note that the parallel transport of a linear subspace may be achieved by parallel transporting a basis for this subspace. 
This implies that also the horizontal spaces of $F(M)$ are mapped to the horizontal spaces of $\Gr(V,\ell)$ via the derivative of the bundle map~$\Pib$.

\section{Computation of Normal Jacobians}\label{sec:proof-tube-form}

For the sake of clarity, we first study a more general situation.
Here is a brief outline: 
Let $M\subseteq S^{n-1}$ be a compact hypersurface
with a distinguished unit normal vector field $\nu\colon M\to S^{n-1}$. 
We embed $F(M)$ into $O(n)$ via the map 
$\hat{\Phi}\colon F(M) \to F(\IR^n)=O(n)$ 
that sends a frame $(q_1,\ldots,q_{n-2})$ of $T_pM$ 
to the frame $(p,q_1,\ldots,q_{n-2},\nu(p))$ of $\IR^n$. 
In Section~\ref{se:lifting} we prove that $\hat{\Phi}$ is an 
injective immersion of $F(M)$ into $O(n)$. Hence 
the image $\hat{\Sigma}(M)$ of $F(M)$ under $\hat{\Phi}$ 
is a smooth submanifold  of $O(n)$. 

We are interested in the set $\Sigma(M)$, which is defined as the image of the map 
\[ 
 \Phi_m\colon \Gr(M,m-1)\to \Gr_{n,m} ,\quad (p,Y) \mapsto \IR p + Y . 
\]
The point is that $\Sigma(M) = \SG_m(C)$ in the special case, where $M=\partial K$ 
is the boundary of a smooth $K\in\mcK^\sm(S^{n-1})$ and $C=\cone(K)$. 
The idea is that $\Sigma(M)$ can also be obtained as the image of $\hat{\Sigma}(M)$ 
under the canonical projection 
$\Pi\colon O(n) \to \Gr_{n,m}$ defined in~\eqref{eq:Pi:O(n)->Gr_(n,m)},
see the commutative diagram~\eqref{eq:comm-diag-Sigma_m}. 

In Section~\ref{se:finish-proof-Psi} we prove, via the detour over $\hat{\Phi}$, 
that $\Sigma(M)$ is a smooth hypersurface of $\Gr_{n,m}$ 
in the  case where $M=\partial K$ with $K\in\mcK^\sm(S^{n-1})$. 
This provides the proof of Proposition~\ref{pro:Sigma}.  
We then go on working in this framework to complete the proof of Theorem~\ref{thm:norm-Jac}.

\subsection{Analyzing the situation lifted to the frame bundle}\label{se:lifting}

We fix a compact hypersurface $M\subseteq S^{n-1}$ 
with a unit normal vector field $\nu\colon M\to S^{n-1}$. 

In the following, we identify the tangent space $T_pM$ for $p\in M$ with the linear subspace $T_pM=p^\bot\cap \nu(p)^\bot$ of~$\IR^n$, 
which has codimension~$2$. If $(p,\bar{Q})\in F(M)$, then $\bar{Q}=(q_1,\ldots,q_{n-2})$ is an orthonormal basis of $T_pM$, 
and in the following we shall interpret~$\bar{Q}$ as the matrix in~$\IR^{n\times (n-2)}$ with the columns $q_i\in\IR^n$. 
Note that $(p,\bar{Q},\nu(p))=(p,q_1,\ldots,q_{n-2},\nu(p))\in O(n)$. This defines the map
  \[ \hat{\Phi}\colon F(M)\to O(n) ,\quad (p,\bar{Q}) \mapsto Q = (p,\bar{Q},\nu(p)) . \]
We define the lifted Sigma set~$\hat{\Sigma}(M)$ as the image of the map $\hat{\Phi}$: 
  \[ \hat{\Sigma} := \hat{\Sigma}(M) := \hat{\Phi}(F(M)) = \{ Q\in O(n)\mid \exists p\in M : Q\cdot e_1=p \,,\; Q\cdot e_n=\nu(p) \} , \]
where $e_i\in\IR^n$ denote the canonical basis vectors. Analogous to the definition of~$\hat{\Phi}$, 
we define for $1\leq m\leq n-1$ the map 
  \[ \Phi_m\colon \Gr(M,m-1)\to \Gr_{n,m} ,\quad (p,Y) \mapsto \IR p + Y , \]
where~$Y\in\Gr(T_pM,m-1)$ is interpreted as an $(m-1)$-dimensional subspace of~$\IR^n$. Furthermore, we define
  \[ \Sigma_m := \Sigma_m(M) := \Phi_m(\Gr(M,m-1)) = \{ W\in\Gr_{n,m}\mid \exists p\in M : p\in W \,,\; W\subseteq \nu(p)^\bot \} . \]
Note that we have $\Pi(\hat{\Sigma}(M))=\Sigma_m$, where $\Pi\colon O(n)\to\Gr_{n,m}$ denotes the canonical projection (cf.~\eqref{eq:Pi:O(n)->Gr_(n,m)}), 
but we have a strict inclusion $\hat{\Sigma}(M)\subsetneq \Pi^{-1}(\Sigma_m(M))$. 

The following commutative diagram provides an overview over the relations, 
which are central for the understanding of $\Sigma_m$:
\begin{equation}\label{eq:comm-diag-Sigma_m}
\begin{array}[c]{c}
\begin{tikzpicture}[>=stealth]
\matrix (m) [matrix of math nodes, column sep=10mm, row sep=12mm, 
             text height=1.5ex, text depth=0.25ex]
  {    F:=F(M) & \hat{\Sigma} & O(n) \\
       G:=\Gr(M,m-1) & \Sigma_m & \Gr_{n,m} \\
  };
\path[->>]
  (m-1-1) edge node[auto]{$\hat{\Phi}$} (m-1-2)
  (m-2-1) edge node[auto]{$\Phi_m$} (m-2-2);
\path[->>]
  (m-1-1) edge node[auto]{$\Pib$} (m-2-1)
  (m-1-2) edge node[auto]{$\PiS$} (m-2-2)
  (m-1-3) edge node[auto]{$\Pi$} (m-2-3);
\path[right hook->]
  (m-1-2) edge (m-1-3)
  (m-2-2) edge (m-2-3);
\end{tikzpicture}
\end{array} .
\end{equation}
Here $\Pib$ is defined as in~\eqref{eq:def-Pi_b}, and $\PiS$ is defined as the restriction of $\Pi$ (cf.~\eqref{eq:Pi:O(n)->Gr_(n,m)}) to $\hat{\Sigma}$. 
Note that if $(p,\bar{Q})\in F(M)$ with $\bar{Q}=(q_1,\ldots,q_{n-2})$, then $\hat{\Phi}(p,\bar{Q})=Q=(p,q_1,\ldots,q_{n-2},\nu(p))\in\hat{\Sigma}$ 
and $\Pib(p,\bar{Q})=(p,Y)$, where $Y$ is the span of $q_1,\ldots,q_{m-1}$.

The tangent spaces of the fiber bundles $F(M)$ and $\Gr(M,m-1)$ have natural 
decompositions into \emph{vertical} and \emph{horizontal components} (cf.~Section~\ref{sec:bundles}): 
\begin{equation}\label{eq:vert-horiz}
  T_{(p,\bar{Q})} F = T_{(p,\bar{Q})}^v F \oplus T_{(p,\bar{Q})}^h F ,\qquad T_{(p,Y)} G = T_{(p,Y)}^v G \oplus T_{(p,Y)}^h G .
\end{equation}
We denote the images of the vertical and the horizontal space of the frame bundle at $(p,\bar{Q})$ under the derivative $D\hat{\Phi}$ by
\begin{equation}\label{eq:T(hat(Sigma))-v-h}
   T_Q^v \hat{\Sigma} := D\hat{\Phi}\left(T_{(p,\bar{Q})}^v F\right) ,\qquad T_Q^h \hat{\Sigma} := D\hat{\Phi}\left(T_{(p,\bar{Q})}^h F\right) .
\end{equation}
We will give next concrete descriptions of these spaces.

Before we state the next result about the derivative of $\hat{\Phi}$, 
we define the following linear maps for $Q=(p,\bar{Q},\nu(p))\in \hat{\Sigma}$
\begin{align}\label{eq:hat(E)^v-hat(E)^h_Q}
   \hat{E}^v & \colon \Skew_{n-2} \to\Skew_n , & \hat{E}^h & \colon \IR^{n-2} \to\Skew_n ,
\\[1mm] \hat{E}^v(\bar{U}) & :=
    \left(\begin{array}{c|c|c}
   \scriptstyle0 & 0 & \scriptstyle0
\\\hline 0 & \raisebox{-3mm}{\rule{0mm}{9mm}}\rule{3mm}{0mm}\bar{U}\rule{3mm}{0mm} & 0
\\\hline \scriptstyle0 & 0 & \scriptstyle0
\end{array}\right) , & \hat{E}^h(a) & :=
   \left(\begin{array}{c|c|c}
   \scriptstyle0 & -a^T & \scriptstyle0
\\\hline a & \raisebox{-3mm}{\rule{0mm}{9mm}}\rule{3mm}{0mm}0\rule{3mm}{0mm} & -b
\\\hline \scriptstyle0 & b^T & \scriptstyle0
\end{array}\right) ,\; \text{where } b:=\bar{Q}^T\cdot \W_p(\bar{Q}a) . \nonumber
\end{align}
Here $\W_p\colon T_pM\to T_pM$ is the 
Weingarten map of~$M$ at~$p$, cf.~\eqref{eq:weing-hypersurf}, and we recall that the image of $\bar{Q}\in\IR^{n\times (n-2)}$ equals $T_pM$, 
since $(p,\bar{Q},\nu(p))\in\hat{\Sigma}$. Note that $\hat{E}^v$ is an 
isometric embedding independent of the manifold~$M$ and the element $Q\in\hat{\Sigma}$, 
while $\hat{E}^h$ is a linear injection, which depends both on~$M$ and on~$Q$. For the sake of simplicity, we do not reflect this dependence in the notation.
Note further that the images of $\hat{E}^v$ and $\hat{E}^h$ are 
orthogonal subspaces of $\Skew_n$.

\begin{lemma}\label{prop:Dhat(Phi)}
Let $(p,\bar{Q})\in F(M)$ and $Q:=\hat{\Phi}(p,\bar{Q})=
(p,\bar{Q},\nu(p))\in\hat{\Sigma}$. 
Then we have
\begin{equation}\label{eq:Dhat(Phi)}
  T_Q^v \hat{\Sigma} = Q\cdot \im(\hat{E}^v) ,\qquad T_Q^h \hat{\Sigma} 
                     = Q\cdot \im(\hat{E}^h) .
\end{equation}
\end{lemma}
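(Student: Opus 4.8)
The plan is to compute the derivative $D\hat\Phi$ explicitly by differentiating along curves, and to handle the vertical and horizontal directions separately, since $\hat\Phi$ is a bundle map covering the identity on $M$ (well, covering the inclusion $M\hookrightarrow S^{n-1}$). Recall that $T_QO(n)=Q\cdot\Skew_n$, so everything comes down to identifying, for a tangent vector $\eta\in T_{(p,\bar Q)}F(M)$, the skew-symmetric matrix $U$ with $D_{(p,\bar Q)}\hat\Phi(\eta)=QU$.

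First I would treat the vertical directions. A vertical tangent vector at $(p,\bar Q)$ is tangent to the fibre $\{p\}\times F(T_pM)$, i.e.\ it is represented by a curve $t\mapsto(p,\bar Q_t)$ with $\bar Q_t=\bar Q\cdot e^{t\bar U}$ for some $\bar U\in\Skew_{n-2}$ (acting on the coordinates of the frame with respect to $\bar Q$). Applying $\hat\Phi$ gives the curve $t\mapsto(p,\bar Q e^{t\bar U},\nu(p))$ in $O(n)$, which equals $Q\cdot\diag(1,e^{t\bar U},1)$. Differentiating at $t=0$ yields exactly $Q\cdot\hat E^v(\bar U)$, so $T^v_Q\hat\Sigma=Q\cdot\im(\hat E^v)$; this part is a routine calculation and should present no difficulty.

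The horizontal directions are the substantive part. A horizontal tangent vector at $(p,\bar Q)$ corresponding to $\zeta\in T_pM$ is represented by $t\mapsto(c(t),\bar Q_t)$, where $c$ is a curve in $M$ with $c(0)=p$, $\dot c(0)=\zeta$, and $\bar Q_t$ is the parallel transport of $\bar Q$ along $c$ in $M$. The key points to carry out: (i) compute $\dot c(0)$ and note that in $\IR^n$ the vector $p$ is carried to $\zeta\in T_pM$; (ii) compute the derivative of $t\mapsto\nu(c(t))$, which by the very definition of the Weingarten map \eqref{eq:weing-hypersurf} is $-\mathcal W_p(\zeta)\in T_pM$ (the sign conventions of the excerpt must be matched here); (iii) compute the derivative of $t\mapsto\bar Q_t$ — since parallel transport in $M\subseteq S^{n-1}$ is the projection onto $T_{c(t)}M$ of the derivative, and since $\bar Q_t$ is a frame of $T_{c(t)}M$, the columns of $\dot{\bar Q}_0$ have no component along $\bar Q$ itself (parallel transport is an isometry, so $\bar Q_t^T\dot{\bar Q}_t$ is skew, and the horizontal lift is precisely the choice making the tangential part vanish), but they do acquire components along $p$ and along $\nu(p)$, dictated by $\langle q_i,p\rangle\equiv 0$ and $\langle q_i,\nu\rangle\equiv 0$ along $c$. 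Differentiating these orthogonality relations gives $\langle\dot q_i(0),p\rangle=-\langle q_i,\zeta\rangle$ and $\langle\dot q_i(0),\nu(p)\rangle=-\langle q_i,-\mathcal W_p(\zeta)\rangle=\langle q_i,\mathcal W_p(\zeta)\rangle$. Assembling the matrix $U=Q^{-1}D\hat\Phi(\eta)$ column by column — first column $\zeta$ written in the basis $Q$, last column $-\mathcal W_p(\zeta)$ written in the basis $Q$, middle columns with the two extra entries just computed — and writing $\zeta=\bar Q a$ for $a\in\IR^{n-2}$, one reads off precisely the block form $\hat E^h(a)$ with $b=\bar Q^T\mathcal W_p(\bar Q a)$. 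Hence $T^h_Q\hat\Sigma=Q\cdot\im(\hat E^h)$.

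The main obstacle I anticipate is bookkeeping with sign and transpose conventions: making sure that the off-diagonal blocks of $\hat E^h(a)$ come out with the signs as written in \eqref{eq:hat(E)^v-hat(E)^h_Q}, that the identification $T_pM=p^\bot\cap\nu(p)^\bot$ is used consistently, and that skew-symmetry of $U$ is automatic (which it must be, since $\hat\Phi$ lands in $O(n)$) — indeed skew-symmetry is a useful internal check: the $(1,n)$ and $(n,1)$ entries of $U$ are $\langle\zeta,\nu(p)\rangle=0$ and $\langle-\mathcal W_p(\zeta),p\rangle=0$, consistent with the zero corner entries. One should also remark that $\im(\hat E^v)$ and $\im(\hat E^h)$ are orthogonal in $\Skew_n$ with its trace inner product, and together they do not span all of $\Skew_n$ (the $(1,n)$-block is missing), which correctly reflects that $\hat\Sigma$ is a hypersurface, not all of $O(n)$: its normal direction in $O(n)$ corresponds to the missing skew-symmetric generator $N$ of \eqref{eq:def-N}, i.e.\ to moving $p$ towards $\nu(p)$, which is exactly the geodesic direction parametrizing the tube.
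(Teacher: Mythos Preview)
Your proposal is correct and follows essentially the same route as the paper's own proof: both split into vertical and horizontal directions, handle the vertical part by differentiating the curve $t\mapsto Q\cdot\diag(1,e^{t\bar U},1)$, and handle the horizontal part by differentiating the lifted curve $(c(t),\bar Q_t,\nu(c(t)))$, reading off the first and last columns of $U=Q^T\dot Q(0)$ from $\zeta$ and $-\W_p(\zeta)$, and using parallel transport to see that the middle $(n-2)\times(n-2)$ block vanishes. The only cosmetic difference is that the paper obtains the first and last \emph{rows} of $U$ by invoking skew-symmetry of $U\in\Skew_n$, whereas you obtain them by differentiating the orthogonality relations $\langle q_i,c(t)\rangle\equiv 0$ and $\langle q_i,\nu(c(t))\rangle\equiv 0$; these are of course equivalent, and your closing remarks about the missing $(1,n)$-direction $N$ correctly anticipate the subsequent corollary.
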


\begin{proof}
Since $p=Qe_1$ and $\nu(p)=Qe_n$, the image of the fiber $\{p\}\times F(T_pM)$ under the map $\hat{\Phi}$ is given by 
  \[ \hat{\Phi}(\{p\}\times F(T_pM)) = \left\{ \left. Q\cdot 
     \left(\begin{array}{c|c|c}
   \scriptstyle1 & 0 & \scriptstyle0
\\\hline 0 & \raisebox{-3mm}{\rule{0mm}{9mm}}\rule{3mm}{0mm}\tilde{Q}\rule{3mm}{0mm} & 0
\\\hline \scriptstyle0 & 0 & \scriptstyle1
\end{array}\right)\right| \tilde{Q}\in O(n-2)\right\} . \]
This implies that the image of the vertical space $T_{(p,\bar{Q})}^v F(M)$ under the derivative 
$D\hat{\Phi}$ is given by $T_Q^v \hat{\Sigma} = Q\cdot \im(\hat{E}^v)$, cf.~\eqref{eq:def-vert}.

As for the horizontal space, let $\xi\in T_{(p,\bar{Q})}^h F(M)$ be 
represented by the curve $c_F\colon\IR\to F(M)$, i.e., $c_F(0)=(p,\bar{Q})$ 
and $\dot{c_F}(0)=\xi$, which is given in the following way: 
$c_F(t)=(c(t),\bar{Q}_t)$, where $\bar{Q}_t$ is the parallel transport of 
$\bar{Q}$ along the curve~$c$ (cf.~Section~\ref{sec:bundles}). 
Let us denote by~$\zeta:=\dot{c}(0)\in T_pM$ the tangent vector at~$p$ defined by~$c$. The image of the curve~$c_F$ under~$\hat{\Phi}$ is thus given by 
$\hat{\Phi}(c_F(t))=(c(t), \bar{Q}_t, \nu(c(t)))=:Q(t)$, 
and the image of $\xi$ is given by $D\hat{\Phi}(\xi)=\dot{Q}(0)$. 
It is sufficient to show that $\dot{Q}(0)$ is given by
\begin{equation}\label{eq:dot(Q)(0)}
  \dot{Q}(0) = Q\cdot \left(\begin{array}{c|c|c}
   \scriptstyle0 & -a^T & \scriptstyle0
\\\hline a & \raisebox{-3mm}{\rule{0mm}{9mm}}\rule{3mm}{0mm}0\rule{3mm}{0mm} & -b
\\\hline \scriptstyle0 & b^T & \scriptstyle0
\end{array}\right) ,\; \text{where} \; 
   a = \bar{Q}^T \zeta ,\quad b = \bar{Q}^T \mcW_p(\zeta) .
\end{equation}

To prove~\eqref{eq:dot(Q)(0)}, note first that as $Q(t)\in O(n)$ and $Q(0)=Q$, 
we have $\dot{Q}(0)=Q\cdot U$ with $U\in\Skew_n$. Recall that the columns of $\bar{Q}$ form an orthonormal basis of $T_pM=p^\bot\cap \nu(p)^\bot$, 
hence $Q^T T_pM=e_1^\bot\cap e_n^\bot$. Therefore, $\dot{c}(0)=\zeta=\bar{Q}a$ for some $a\in\IR^{n-2}$, which implies $a=\bar{Q}^T\zeta$. 
The first column of $U$ is thus given by $Ue_1=Q^T\dot{Q}(0)e_1=Q^T\zeta 
=\left(\begin{smallmatrix} 0 \\ \textstyle a \\ 0\end{smallmatrix}\right)$.
By skew-symmetry of~$U$, this also gives us the first row of~$U$. The zero matrix 
in the middle follows from the fact that the frame $\bar{Q}$ is parallel 
transported along~$c$ (cf.~\cite[\S VII.3]{booth}). 
Finally, the last column of $\dot{Q}(0)$ is given by $D_p\nu(\zeta)$.
This implies that the last column of $U$ is given by
  \[ Q^T D_p\nu(\zeta) \stackrel{\eqref{eq:weing-hypersurf}}{=} -Q^T \mcW_p(\zeta) . \]
As $\mcW_p(\zeta)\in T_pM = p^\bot\cap \nu(p)^\bot$, the last column of $U$ has the form 
$\left(\begin{smallmatrix} 0 \\ \textstyle -b \\ 0\end{smallmatrix}\right)$.  
Therefore $\mcW_p(\zeta) 
= Q \left(\begin{smallmatrix} 0 \\ \textstyle b \\ 0\end{smallmatrix}\right) = \bar{Q}b$, which implies $b=\bar{Q}^T \mcW_p(\zeta)$.
The last row follows again by skew-symmetry of~$U$.
\end{proof}

\begin{corollary}\label{cor:T-hat(Sigma)}
The map $\hat{\Phi}$ is an injective immersion of $F(M)$ into $O(n)$. 
In particular, $\hat{\Sigma}$ is a smooth submanifold of $O(n)$ of codimension $n-1$, and the tangent space $T_Q\hat{\Sigma}$ at $Q\in O(n)$ 
has the orthogonal decomposition $T_Q\hat{\Sigma}=T_Q^v \hat{\Sigma} \oplus T_Q^h \hat{\Sigma}$.
\end{corollary}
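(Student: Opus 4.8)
The plan is to derive the corollary from Lemma~\ref{prop:Dhat(Phi)} together with the standard fact that an injective immersion from a compact manifold is an embedding. Injectivity of $\hat{\Phi}$ is immediate: if $\hat{\Phi}(p_1,\bar{Q}_1)=\hat{\Phi}(p_2,\bar{Q}_2)$, then comparing the first columns of the two orthogonal matrices gives $p_1=p_2$, comparing the last columns confirms $\nu(p_1)=\nu(p_2)$, and comparing the middle blocks gives $\bar{Q}_1=\bar{Q}_2$.

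Next I would show that $\hat{\Phi}$ is an immersion, i.e.\ that $D_{(p,\bar{Q})}\hat{\Phi}$ is injective for every $(p,\bar{Q})\in F(M)$. Using the decomposition $T_{(p,\bar{Q})}F = T^v_{(p,\bar{Q})}F \oplus T^h_{(p,\bar{Q})}F$ from~\eqref{eq:vert-horiz}, it suffices to check three things: that $D\hat{\Phi}$ is injective on $T^v_{(p,\bar{Q})}F$; that $D\hat{\Phi}$ is injective on $T^h_{(p,\bar{Q})}F$; and that these two images intersect trivially. For the first, identifying $F(T_pM)$ with $O(n-2)$ via the reference frame $\bar{Q}$ identifies $T^v_{(p,\bar{Q})}F$ with $\Skew_{n-2}$, and the computation in the proof of Lemma~\ref{prop:Dhat(Phi)} shows that $D\hat{\Phi}$ then acts as $\bar{U}\mapsto Q\cdot\hat{E}^v(\bar{U})$, which is injective since $\hat{E}^v$ is an isometric embedding. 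For the second, the horizontal space is by construction the image of the injective map $T_pM\to T_{(p,\bar{Q})}F$, $\zeta\mapsto\dot{c}_F(0)$, and by~\eqref{eq:dot(Q)(0)} the composition with $D\hat{\Phi}$ sends $\zeta$ to $Q\cdot\hat{E}^h(\bar{Q}^T\zeta)$; since $\bar{Q}^T\colon T_pM\to\IR^{n-2}$ is an isometry and $\hat{E}^h$ is a linear injection, this composition is injective, hence so is $D\hat{\Phi}$ restricted to $T^h_{(p,\bar{Q})}F$. For the third, recall (noted just before Lemma~\ref{prop:Dhat(Phi)}) that $\im(\hat{E}^v)$ and $\im(\hat{E}^h)$ are orthogonal subspaces of $\Skew_n$, and that left translation by $Q\in O(n)$ is an isometry of $TO(n)$; hence $Q\cdot\im(\hat{E}^v)\perp Q\cdot\im(\hat{E}^h)$ and they meet only in $0$. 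This gives injectivity of $D\hat{\Phi}$ on all of $T_{(p,\bar{Q})}F$.

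Finally, since $M$ is a compact hypersurface, $F(M)$ is compact, so the injective immersion $\hat{\Phi}$ is an embedding; thus $\hat{\Sigma}=\hat{\Phi}(F(M))$ is a smooth embedded submanifold of $O(n)$ and $\hat{\Phi}\colon F(M)\to\hat{\Sigma}$ a diffeomorphism. The codimension follows from $\dim\hat{\Sigma}=\dim F(M)=(n-2)+\binom{n-2}{2}$ together with the elementary identity $\binom{n}{2}-(n-2)-\binom{n-2}{2}=n-1$. The orthogonal decomposition $T_Q\hat{\Sigma}=T^v_Q\hat{\Sigma}\oplus T^h_Q\hat{\Sigma}$ is obtained by applying $D\hat{\Phi}$ to $T_{(p,\bar{Q})}F=T^v_{(p,\bar{Q})}F\oplus T^h_{(p,\bar{Q})}F$ and invoking the notation~\eqref{eq:T(hat(Sigma))-v-h}, the orthogonality being exactly the third point above. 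All of the substantive content sits in Lemma~\ref{prop:Dhat(Phi)}; the only mild subtlety in the present argument is keeping the dimension bookkeeping straight and remembering that the orthogonality of $\im(\hat{E}^v)$ and $\im(\hat{E}^h)$ persists after left translation, so I do not anticipate a serious obstacle here.
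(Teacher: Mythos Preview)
Your proof is correct and follows essentially the same approach as the paper: both invoke Lemma~\ref{prop:Dhat(Phi)} to obtain that $D\hat{\Phi}$ has full rank, use compactness of $F(M)$ to upgrade the injective immersion to an embedding, and perform the same dimension count and orthogonality argument. The only difference is that you spell out in more detail why Lemma~\ref{prop:Dhat(Phi)} implies injectivity of $D\hat{\Phi}$ (injectivity on each summand plus orthogonality of the images), which the paper leaves implicit.
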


\begin{proof}
The fact that $\hat{\Phi}$ is a smooth injective map is obvious. Furthermore, 
by Lemma~\ref{prop:Dhat(Phi)}, it follows that the derivative 
$D\hat{\Phi}$ at $(p,\bar{Q})$ has full rank. The map $\hat{\Phi}$ is thus an injective immersion, 
and as the domain $F(M)$ is compact, it is also an embedding. 
As for the dimension, we compute
\begin{align*}
   \dim \hat{\Sigma} & = \dim F(M) = \dim M + \dim O(n-2) 
     = n-2+\tfrac{(n-2)(n-3)}{2} = \tfrac{(n-2)(n-1)}{2}
\\ & = \dim O(n)-(n-1) .
\end{align*}
The decomposition of the tangent space into the direct sum $T_Q\hat{\Sigma}=T_Q^v \hat{\Sigma} \oplus T_Q^h \hat{\Sigma}$ 
follows from the decomposition of the tangent space of the fiber bundle~\eqref{eq:vert-horiz}. 
The fact that $T_Q^v \hat{\Sigma}$ and $T_Q^h \hat{\Sigma}$ are orthogonal follows from the description given in Lemma~\ref{prop:Dhat(Phi)}.
\end{proof}

Now that we have a clear description of the lifted Sigma set $\hat{\Sigma}$, 
we will transfer this description to $\Sigma_m$ via the 
projection map~$\PiS\colon\hat{\Sigma}\to\SG_m$ defined in~\eqref{eq:comm-diag-Sigma_m}.  
Analogously to the lifted case, 
we denote the images under the derivative $D\Phi_m$ 
of the vertical and the horizontal space, cf.~\eqref{eq:vert-horiz}, 
of the Grassmann bundle at $(p,Y)$ by
\begin{equation}\label{eq:T(Sigma_m)-v-h}
   T_W^v \Sigma_m := D\Phi_m\left(T_{(p,Y)}^v G\right) ,\qquad T_W^h \Sigma_m := D\Phi_m\left(T_{(p,Y)}^h G\right) .
\end{equation}

Recall that the subspace $\ol{\Skew_n}$ defined in~\eqref{eq:bar(Skew_n)} serves as a model for the tangent spaces of~$\Gr_{n,m}$. 
We consider the linear map
\begin{equation}\label{eq:def-alpha}
  \alpha\colon \IR^{n-2}\to\ol{\Skew_n} ,\quad x=\begin{pmatrix} x_1 \\ x_2\end{pmatrix} \mapsto \left(\begin{array}{c|c|c|c}
   \multicolumn{2}{c|}{\multirow{2}{*}{$0$}} & -x_2^T & \scriptstyle0 
   \\\cline{3-4} \multicolumn{2}{c|}{} & 0 & -x_1
\\\hline x_2 & 0 & \multicolumn{2}{c}{\multirow{2}{*}{$0$}}
   \\\cline{1-2} \rule{0mm}{4mm}\scriptstyle0 & \rule{1mm}{0mm}x_1^T\rule{1mm}{0mm}
\end{array}\right) ,
\end{equation}
where $x_1\in\IR^{m-1}$ and $x_2\in\IR^{n-m-1}$, and we denote its image by
  \[ S_n := \im(\alpha)\subseteq\ol{\Skew_n} . \]
Note that $\dim S_n = n-2$. We now define the following linear maps for $Q=(p,\bar{Q},\nu(p))\in \hat{\Sigma}$: 
\begin{align}\label{eq:E^v-E^h_Q}
   E^v & \colon \IR^{(n-m-1)\times(m-1)} \to\ol{\Skew_n} , & E^h & \colon \IR^{n-2} \to S_n , 
\\[1mm] E^v(X) & :=
    \left(\begin{array}{c|c|c|c}
   \multicolumn{2}{c|}{\multirow{2}{*}{$0$}} & 0 & \scriptstyle0 
   \\\cline{3-4} \multicolumn{2}{c|}{} & \rule{0mm}{4mm}-X^T & 0
\\\hline 0 & \rule{0mm}{4mm}X & \multicolumn{2}{c}{\multirow{2}{*}{$0$}}
   \\\cline{1-2} \scriptstyle0 & 0
\end{array}\right) , & E^h(a) & :=
   \left(\begin{array}{c|c|c|c}
   \multicolumn{2}{c|}{\multirow{2}{*}{$0$}} & -a_2^T & \scriptstyle0 
   \\\cline{3-4} \multicolumn{2}{c|}{} & 0 & -b_1
\\\hline a_2 & 0 & \multicolumn{2}{c}{\multirow{2}{*}{$0$}}
   \\\cline{1-2} \rule{0mm}{4mm}\scriptstyle0 & \rule{1mm}{0mm}b_1^T\rule{1mm}{0mm}
\end{array}\right) , \nonumber
\end{align}
where again $b:=\bar{Q}^T\cdot \W_p(\bar{Q}a)$, and $a
=\left(\begin{smallmatrix} \textstyle a_1\\[1pt]\textstyle a_2\end{smallmatrix}\right)$, $b
=\left(\begin{smallmatrix} \textstyle b_1\\[1pt]\textstyle b_2\end{smallmatrix}\right)$ 
with $a_1,b_1\in\IR^{m-1}$, $a_2,b_2\in\IR^{n-m-1}$. The map $E^v$ is an isometry if the scalar product on $\ol{\Skew_n}$ 
induced from $\IR^{n\times n}$ is scaled by~$\frac{1}{2}$. Note that $E^v$ is independent of~$M$ and~$Q$. 
On the other hand, as in the lifted case, the map $E^h$ depends on both~$M$ and~$Q$. Unlike in the lifted case, the map $E^h$ need not be injective. 
We will see below in Proposition~\ref{prop:TSigma_m} that it is injective iff the restriction~$\W_{p,Y}$ of 
the Weingarten map~$\W_p$ to the subspace $Y=\lin\{q_1,\ldots,q_{m-1}\}\subseteq T_pM$, where $Q=(p,q_1,\ldots,q_{n-2},\nu(p))$, has full rank. 
If this is the case, then $\im E^h=S_n$. 

Note that we have an orthogonal decomposition of $\ol{\Skew_n}$ into
\begin{equation}\label{eq:decomp-ol(Skew_n)}
  \ol{\Skew_n} = \im E^v \,\oplus\, S_n \,\oplus\, \IR N ,
\end{equation}
where $N\in\ol{\Skew_n}$ was defined in~\eqref{eq:def-N}. If we define $\iota\colon\IR^{(n-m-1)\times(m-1)} \to\Skew_{n-2}$, 
via $X\mapsto \big(\begin{smallmatrix} 0 & -X^T \\ X & 0\end{smallmatrix}\big)$, and 
if we denote by $\pi\colon\Skew_n\to\ol{\Skew_n}$ the orthogonal projection, then we have the crucial relation
\begin{equation}\label{eq:E^h=pi(hat(E^h))}
  E^v(X) = \hat{E}^v(\iota(X)) ,\qquad E^h(a) = \pi(\hat{E}^h(a)) .
\end{equation}

\begin{proposition}\label{prop:TSigma_m}
\begin{enumerate}
  \item Let $(p,Y)\in \Gr(M,m-1)$ and $W:=\Phi_m(p,Y)$, and let $Q\in\hat{\Sigma}$ 
        be such that $\PiS(Q)=W$. Then we have, using the model of tangent spaces of $\Gr_{n,m}$ 
       described in~\eqref{eq:model-TS-Gr}: 
  \begin{equation}\label{eq:T_W^v(Sigma_m)}
    T_W^v \Sigma_m = \big\{ [Q,U]\mid U\in\im(E^v)\big\} ,\qquad 
    T_W^h \Sigma_m = \big\{ [Q,U]\mid U\in\im(E^h)\big\} .
  \end{equation}
  \item The rank of the map $E^h$ is given by 
     \[ \rk(E^h) = n-m-1+\rk \W_{p,Y} \, , \]
        where $\W_{p,Y}\colon Y\to Y$ is the restriction of the Weingarten map $\W_p$ to the subspace~$Y$ of~$T_pM$. 
Moreover, if $\W_{p,Y}$ has full rank and if we consider $S_n$ to be endowed with the scalar product 
$\langle U_1,U_2\rangle := \frac{1}{2}\cdot \tr(U_1^T\cdot U_2)$, then
  \begin{equation}\label{eq:ndet(E^h)}
    |\det(E^h)|=|\tdet_Y(\W_p)| ,
  \end{equation}
        where $\det_Y(\W_p)$ denotes the determinant of $\W_{p,Y}$ (cf.~Definition~\ref{def:tw-char-pol}).
  \item The rank of the derivative $D_{(p,Y)}\Phi_m$ is given by
     \[ \rk D_{(p,Y)}\Phi_m = m(n-m) - m + \rk \W_{p,Y} , \]
\end{enumerate}
\end{proposition}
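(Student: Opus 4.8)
The plan is to push the description of the tangent space of $\hat{\Sigma}$ from Lemma~\ref{prop:Dhat(Phi)} down to $\Sigma_m$ along the projection $\PiS$, using the commutative diagram~\eqref{eq:comm-diag-Sigma_m}, and then to read off the three rank statements from the resulting formulas together with the orthogonal decomposition~\eqref{eq:decomp-ol(Skew_n)} of $\ol{\Skew_n}$. For part~(1) I would differentiate $\PiS\circ\hat{\Phi}=\Phi_m\circ\Pib$ to obtain $D\PiS\circ D\hat{\Phi}=D\Phi_m\circ D\Pib$. Restricted to a fibre, $\Pib$ is a submersion onto the corresponding Grassmannian fibre, so $D\Pib$ maps the vertical space of $F(M)$ onto that of $\Gr(M,m-1)$; on horizontal spaces $D\Pib$ is an isomorphism, since parallel transport of a subspace is parallel transport of a frame (Section~\ref{sec:bundles}). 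Together with~\eqref{eq:T(hat(Sigma))-v-h} and~\eqref{eq:T(Sigma_m)-v-h} this yields $T_W^v\Sigma_m=D\PiS(T_Q^v\hat{\Sigma})$ and $T_W^h\Sigma_m=D\PiS(T_Q^h\hat{\Sigma})$. As $\PiS$ is a restriction of $\Pi$, its derivative is $D_Q\Pi(QU)=[Q,\pi(U)]$ by~\eqref{eq:D_QPi}, with $\pi\colon\Skew_n\to\ol{\Skew_n}$ the orthogonal projection; inserting Lemma~\ref{prop:Dhat(Phi)} gives $T_W^v\Sigma_m=\{[Q,U]\mid U\in\pi(\im\hat{E}^v)\}$ and the analogous formula with $\hat{E}^h$. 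Finally $\pi(\hat{E}^v(\bar{U}))$ depends only on the off-diagonal block of $\bar{U}$, where it agrees with $E^v$, so $\pi(\im\hat{E}^v)=\im E^v$, while $\pi(\im\hat{E}^h)=\im E^h$ is immediate from~\eqref{eq:E^h=pi(hat(E^h))}; this is~\eqref{eq:T_W^v(Sigma_m)}.

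For part~(2) I would work with the symmetric matrix $\mathbf{W}:=\bar{Q}^T\W_p\bar{Q}$ representing $\W_p$ in the orthonormal frame $\bar{Q}=(q_1,\dots,q_{n-2})$; its top-left $(m-1)\times(m-1)$ block $\mathbf{W}_1$ is the matrix of the compression $\W_{p,Y}$ in the orthonormal basis $q_1,\dots,q_{m-1}$ of $Y$, so $\det\mathbf{W}_1=\tdet_Y(\W_p)$. Since $b=\mathbf{W}a$ and, by~\eqref{eq:E^v-E^h_Q}, $E^h(a)$ involves only $(b_1,a_2)$, the map factors as $E^h=\alpha\circ L$ with $L\colon\IR^{n-2}\to\IR^{n-2}$, $a=(a_1,a_2)\mapsto(\mathbf{W}_1a_1+\mathbf{W}_2a_2,\,a_2)$. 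The block-triangular form gives $\ker L=\ker\mathbf{W}_1\times\{0\}$ and $\det L=\det\mathbf{W}_1$, and since $\alpha$ is injective with image $S_n$ we get $\rk E^h=(n-2)-\dim\ker\mathbf{W}_1=n-m-1+\rk\W_{p,Y}$. If $\W_{p,Y}$ has full rank then $E^h$ is bijective onto $S_n$; the explicit form of $\alpha$ shows $\|\alpha(x)\|^2=\tfrac12\tr(\alpha(x)^T\alpha(x))=\|x\|^2$, so $\alpha$ is an isometry for the scaled scalar product on $S_n$, whence $|\det E^h|=|\det L|=|\det\mathbf{W}_1|=|\tdet_Y(\W_p)|$.

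For part~(3), note $\im D_{(p,Y)}\Phi_m=D\Phi_m(T^v_{(p,Y)}G)+D\Phi_m(T^h_{(p,Y)}G)=T_W^v\Sigma_m+T_W^h\Sigma_m$, which under the isomorphism $U\mapsto[Q,U]$ of~\eqref{eq:model-TS-Gr} has dimension $\dim(\im E^v+\im E^h)$. By~\eqref{eq:decomp-ol(Skew_n)}, $\ol{\Skew_n}=\im E^v\oplus S_n\oplus\IR N$ and $\im E^h\subseteq S_n$, so the sum is direct; combining $\rk E^v=(m-1)(n-m-1)$ (as $E^v$ is injective) with the rank of $E^h$ from part~(2) gives $\rk D_{(p,Y)}\Phi_m=(m-1)(n-m-1)+(n-m-1)+\rk\W_{p,Y}=m(n-m)-m+\rk\W_{p,Y}$.

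The one genuinely delicate point is part~(2): one must keep the four-block splitting $(m-1)\,|\,1\,|\,(n-m-1)\,|\,1$ straight, remember that $|\det E^h|$ means the volume-scaling factor between the standard inner product on $\IR^{n-2}$ and the $\tfrac12\tr$-scaled scalar product on $S_n$, and verify that $\mathbf{W}_1$ genuinely is the matrix of $\W_{p,Y}$ in an \emph{orthonormal} basis of $Y$, so that $\det\mathbf{W}_1=\det\W_{p,Y}$ with no Gram-matrix correction. Parts~(1) and~(3) are then essentially bookkeeping built on Lemma~\ref{prop:Dhat(Phi)}, Corollary~\ref{cor:T-hat(Sigma)}, the diagram~\eqref{eq:comm-diag-Sigma_m}, and the decomposition~\eqref{eq:decomp-ol(Skew_n)}.
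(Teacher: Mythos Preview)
Your proposal is correct and follows essentially the same route as the paper: in part~(1) you push Lemma~\ref{prop:Dhat(Phi)} through $D\PiS=[Q,\pi(\,\cdot\,)]$ exactly as the paper does; in part~(2) your factorisation $E^h=\alpha\circ L$ with $L(a_1,a_2)=(\mathbf{W}_1a_1+\mathbf{W}_2a_2,a_2)$ is precisely the paper's $\alpha^{-1}\circ E^h$ with block matrix $\bigl(\begin{smallmatrix}\Lambda_1&\Lambda_2\\0&I\end{smallmatrix}\bigr)$; and your part~(3) is the same dimension count, with the extra (and welcome) remark that~\eqref{eq:decomp-ol(Skew_n)} makes the sum $\im E^v+\im E^h$ direct.
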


\begin{proof}
(1) Since the derivative of the bundle projection $\Pib\colon F(M)\to\Gr(M,\ell)$ 
defined in~\eqref{eq:def-Pi_b} maps the vertical spaces of 
the frame bundle to the vertical spaces of the Grassmann bundle 
(cf.~Section~\ref{sec:bundles}), we have
\begin{align*}
   T_W^v \Sigma_m & \stackrel{\eqref{eq:T(Sigma_m)-v-h}}{=} D\Phi_m\left(T_{(p,Y)}^v G\right) \stackrel{\eqref{eq:comm-diag-Sigma_m}}{=} D\PiS(T_Q^v \hat{\Sigma}) 
\stackrel{\eqref{eq:Dhat(Phi)}}{=} D\PiS(Q\cdot\im(\hat{E}^v)) .
\end{align*}
Using $D_Q\PiS(QU)=[Q,\bar{U}]$ (cf.~\eqref{eq:D_QPi}), we may continue as
\begin{align*}
   T_W^v \Sigma_m & = \big\{ [Q,U]\mid U\in \pi(\im(\hat{E}^v))\big\}
 = \big\{ [Q,U]\mid U\in\im(E^v)\big\} .
\end{align*}
Similar arguments show the description~\eqref{eq:T_W^v(Sigma_m)} of the horizontal space. 

(2) By the above choice of scalar product in $S_n$, the map $\alpha$ defined in~\eqref{eq:def-alpha} is an isometry on its image. 
Hence, instead of considering $E^h$ we may focus on the map
  \[ \alpha^{-1}\circ E^h\colon\IR^{n-2}\to\IR^{n-2} ,\quad a=\begin{pmatrix} a_1 \\ a_2\end{pmatrix} \mapsto \begin{pmatrix} b_1 \\ a_2\end{pmatrix} , \]
where as usual $b=\bar{Q}^T\cdot \W_p(\bar{Q}a)$, and $a
=\left(\begin{smallmatrix} \textstyle a_1\\[1pt]\textstyle a_2\end{smallmatrix}\right)$, $b
=\left(\begin{smallmatrix} \textstyle b_1\\[1pt]\textstyle b_2\end{smallmatrix}\right)$ with $a_1,b_1\in\IR^{m-1}$, $a_2,b_2\in\IR^{n-m-1}$.
 Let $\Lambda\in\IR^{(n-2)\times (n-2)}$ be the representation matrix of $\W_p$ with respect to the orthonormal basis $q_1,\ldots,q_{n-2}$ of $T_pM$, 
i.e., $\Lambda a =\bar{Q}^T\cdot \W_p(\bar{Q}a)=b$. Furthermore, let $\Lambda$ be decomposed as $\Lambda = \begin{pmatrix} \Lambda_1 & \Lambda_2 
\\ \Lambda_3 & \Lambda_4 \end{pmatrix}$, where $\Lambda_1\in\IR^{(m-1)\times (m-1)}$
and the other blocks accordingly. Then the representation matrix of $\alpha^{-1}\circ E^h$ is given by 
$\begin{pmatrix} \Lambda_1 & \Lambda_2 \\ 0 & I_{n-m-1}\end{pmatrix}$. As $\Lambda_1$ is 
the representation matrix of $\W_{p,Y}$ with respect to the orthonormal basis $q_1,\ldots,q_{m-1}$ of $Y$, we obtain
  \[ \rk(E^h)=\rk(\alpha^{-1}\circ E^h)=n-m-1+\rk(\Lambda_1)=n-m-1+\rk(\W_{p,Y}) . \]
Furthermore, if $\W_{p,Y}$ has full rank $\rk \W_{p,Y}=m-1$, then
  \[ |\det(E^h)|=|\det(\alpha^{-1}\circ E^h)|=|\det(\Lambda_1)|=|\tdet_Y(\W_p)| . \]

(3) From~(1) and~(2) we obtain
\begin{align*}
   \dim T_W^v\Sigma_m & = \rk(E^v)=(m-1)(n-m-1) ,
\\ \dim T_W^h\Sigma_m & = \rk(E^h)=n-m-1 + \rk \W_{p,Y} .
\end{align*}
This implies that the rank of the derivative of $\Phi_m$ is given by
  \[ \rk D_{(p,Y)}\Phi_m = \dim T_W^v\Sigma_m + \dim T_W^h\Sigma_m
                       = m(n-m-1) + \rk \W_{p,Y} . \qedhere \]
\end{proof}

\subsection{Specializing to the boundary of a convex set}\label{se:finish-proof-Psi}

We assume now that  $M=\partial K$ for some $K\in\mcK^\sm(S^{n-1})$. 

The subsequent corollary implies Proposition~\ref{pro:Sigma},  
as well as the claim about the normal Jacobian of $\PiM$ 
in Theorem~\ref{thm:norm-Jac}. 

\begin{corollary}\label{cor:Sigma_m}
Let $M=\partial K$ with $K\in\mcK^\sm(S^{n-1})$.
\begin{enumerate}
  \item Then $\Phi_m$ is an injective immersion of $\Gr(M,m-1)$ into $\Gr_{n,m}$, 
  and $\Sigma_m$ is a smooth hypersurface of $\Gr_{n,m}$. Furthermore, for $W\in\Sigma_m$, the tangent space of $\Sigma_m$ at $W$ has 
  the orthogonal decomposition $T_W\Sigma_m=T_W^v\Sigma_m\oplus T_W^h\Sigma_m$.

  \item Recall $N\in\ol{\Skew_n}$ defined in~\eqref{eq:def-N}. 
  If $W=\PiS(Q)\in\Sigma_m$ for $Q\in\hat{\Sigma}$, then 
  $\nuS(W) = [Q,N] \in T_W\Gr_{n,m}$ is a unit normal vector of $\Sigma_m$ at $W$ 
  that points into~$\PG_m(C)$. Moreover, $-\nuS(W)$ points into $\DG_m(C)$. 

  \item Consider $\PiM \colon \Sigma_m \to M, \, W \mapsto p$, where $W\cap K=\{p\}$, cf.~\eqref{eq:def-Pi_m-Psi}. 
  Then the normal Jacobian of $\PiM$ at $W\in\Sigma_m$ is given by 
  $\ndet(D_W\PiM) = \tdet_Y(\W_p)^{-1}$, where $Y:=W\cap p^\bot\in\Gr(T_pM,m-1)$. 
\end{enumerate}
\end{corollary}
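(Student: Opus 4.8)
The plan is to prove the three assertions in order, with Proposition~\ref{prop:TSigma_m} doing the heavy lifting and using throughout that for $K\in\mcK^\sm(S^{n-1})$ the Weingarten map $\W_p$ is positive definite at every $p\in M=\partial K$. First, positive definiteness of $\W_p$ forces its restriction $\W_{p,Y}$ to be positive definite for every $Y\in\Gr(T_pM,m-1)$ (since $\langle\W_{p,Y}y,y\rangle=\langle\W_p y,y\rangle>0$ for $y\neq0$), hence of full rank $m-1$. Feeding $\rk\W_{p,Y}=m-1$ into Proposition~\ref{prop:TSigma_m}(3) gives $\rk D_{(p,Y)}\Phi_m=m(n-m-1)+(m-1)=mn-m^2-1=\dim\Gr(M,m-1)$, so $\Phi_m$ is an immersion. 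It is injective: if $\Phi_m(p,Y)=\Phi_m(p',Y')=W$, then $p,p'\in W\cap K$, so Lemma~\ref{prop:WcapK=p} gives $p=p'$, and then $Y=W\cap p^\bot=Y'$ because $\dim(W\cap p^\bot)=m-1=\dim Y$. As $\Gr(M,m-1)$ is compact, $\Phi_m$ is an embedding, so $\Sigma_m$ is a smooth submanifold of $\Gr_{n,m}$ of codimension~$1$. The decomposition $T_W\Sigma_m=T_W^v\Sigma_m\oplus T_W^h\Sigma_m$ is then a direct sum because $D\Phi_m$ is injective on $T_{(p,Y)}G=T^v_{(p,Y)}G\oplus T^h_{(p,Y)}G$, and it is orthogonal because by Proposition~\ref{prop:TSigma_m}(1) the summands are $\{[Q,U]\mid U\in\im E^v\}$ and $\{[Q,U]\mid U\in\im E^h\}$ with $\im E^v\perp\im E^h\subseteq S_n$ by~\eqref{eq:decomp-ol(Skew_n)}, while $[Q,\cdot]$ is an isometry onto $T_W\Gr_{n,m}$.

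For part (2), I would first check that $[Q,N]$ is a well-defined unit normal. If $Q,Q'\in\hat{\Sigma}$ both project to $W$, then $Q'=Qh$ with $h\in H$, and moreover $he_1=e_1$ and $he_n=e_n$ because $Qe_1=p=Q'e_1$ and $Qe_n=\nu(p)=Q'e_n$; hence $h^{-1}Nh=N$, so $[Q',N]=[Q,N]$. Also $\|[Q,N]\|^2=\tfrac12\tr(N^TN)=1$, and $[Q,N]\perp T_W\Sigma_m$ since $\IR N\perp(\im E^v\oplus\im E^h)$ by~\eqref{eq:decomp-ol(Skew_n)}. To pin down the direction I would use~\eqref{eq:exp-Gr(n,m)} and~\eqref{eq:exp_Q(QU)}: $\exp_W(\theta[Q,N])=\Pi(Qe^{\theta N})=\Pi(QQ_\theta)=\IR(\cos\theta\,p-\sin\theta\,\nu(p))+Y=:W_\theta$. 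I claim $W_\theta\cap C=\{0\}$ for small $\theta>0$: a unit vector $w=s(\cos\theta\,p-\sin\theta\,\nu(p))+y\in W_\theta$ (with $y\in Y$, $s^2+\|y\|^2=1$) lying in $C$ must satisfy $0\leq\langle w,\nu(p)\rangle=-s\sin\theta$, so $s\leq0$; letting $\theta\to0^+$ along a would-be bad sequence yields a limit $w_*=s_*p+y_*\in C$ with $\langle w_*,\nu(p)\rangle=0$, hence $w_*\in C\cap\nu(p)^\bot=\IR_{\geq0}p$ by strict convexity of $K$ (the argument of Lemma~\ref{prop:WcapK=p}), forcing $y_*=0$ and $s_*=0$, contradicting $s_*^2+\|y_*\|^2=1$; compactness of $\Sigma_m$ makes this uniform in $W$. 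Thus $W_\theta\notin\DG_m(C)$, so $W_\theta\in\PG_m(C)$, i.e.\ $\nu_\Sigma:=[Q,N]$ points into $\PG_m(C)$. Symmetrically, $\exp_W(-\theta[Q,N])=\IR(\cos\theta\,p+\sin\theta\,\nu(p))+Y$ contains the point $\cos\theta\,p+\sin\theta\,\nu(p)$, which lies in $\inter(C)$ for small $\theta>0$ since $\nu(p)$ is the inward normal; hence $-\nu_\Sigma$ points into $\DG_m(C)$.

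For part (3), I would use that $\PiM\circ\Phi_m$ is the bundle projection $\Gr(M,m-1)\to M$, $(p,Y)\mapsto p$, whose kernel at $(p,Y)$ is the vertical space; since $\Phi_m$ is a diffeomorphism onto $\Sigma_m$ (part (1)), this gives $\ker D_W\PiM=T_W^v\Sigma_m$, hence $(\ker D_W\PiM)^\bot\cap T_W\Sigma_m=T_W^h\Sigma_m$ by part (1) and $\ndet(D_W\PiM)=|\det(D_W\PiM|_{T_W^h\Sigma_m})|$. Tracing a horizontal vector through diagram~\eqref{eq:comm-diag-Sigma_m}: the horizontal lift of $\zeta=\bar Q a\in T_pM$ is sent by $D\hat{\Phi}$ to $Q\hat E^h(a)$ (by~\eqref{eq:dot(Q)(0)}), then by $D\PiS$ to $[Q,\pi(\hat E^h(a))]=[Q,E^h(a)]$ (by~\eqref{eq:E^h=pi(hat(E^h))}), while $\PiM$ sends it back to $\zeta=\bar Q a$ because $\PiM\circ\PiS\circ\hat{\Phi}$ is the base projection of $F(M)$. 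So under the parametrization $a\mapsto[Q,E^h(a)]$ of $T_W^h\Sigma_m$ (a linear isomorphism since $\W_{p,Y}$ has full rank), $D_W\PiM$ becomes $a\mapsto\bar Q a$. Now $\bar Q\colon\IR^{n-2}\to T_pM$ is an isometry, and by Proposition~\ref{prop:TSigma_m}(2) the parametrization $[Q,\cdot]\circ E^h\colon\IR^{n-2}\to T_W^h\Sigma_m$ has $|\det|=|\tdet_Y(\W_p)|=\tdet_Y(\W_p)$ (positive since $\W_{p,Y}\succ0$); therefore $\ndet(D_W\PiM)=1/\tdet_Y(\W_p)$, as claimed.

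The main obstacle is part (2): identifying which of $\PG_m(C)$, $\DG_m(C)$ the normal points into requires translating the abstract normal vector $[Q,N]$ into the concrete rotated subspace $W_\theta=\IR(\cos\theta\,p\mp\sin\theta\,\nu(p))+Y$ and then arguing, via strict convexity of $K$ and compactness of $\Sigma_m$, that $W_\theta$ separates cleanly from (respectively meets) $C$; parts (1) and (3) are essentially bookkeeping on top of Proposition~\ref{prop:TSigma_m}.
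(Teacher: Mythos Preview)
Your proposal is correct and follows essentially the same approach as the paper. In part~(1) you invoke Proposition~\ref{prop:TSigma_m} and positive definiteness of $\W_p$ exactly as the paper does; in part~(2) you compute the geodesic $W_\theta=\Pi(QQ_\theta)$ explicitly and read off the direction from where the rotated line $\IR(\cos\theta\,p\mp\sin\theta\,\nu(p))$ falls relative to~$C$, which is precisely the paper's argument (your well-definedness check and compactness/limiting argument for $W_\theta\cap C=\{0\}$ are welcome additions that the paper leaves implicit); in part~(3) you identify $\ker D_W\PiM=T_W^v\Sigma_m$ via the bundle projection $\PiM\circ\Phi_m$ and then compute $\ndet(D_W\PiM)$ by factoring through $E^h$ and the isometries $[Q,\cdot]$, $\bar Q$, which is the same computation the paper carries out via the lifted map $\PiMh$ on~$\hat\Sigma$.
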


\begin{proof}
(1) The injectivity of~$\Phi_m$ follows from Proposition~\ref{prop:WcapK=p}.
Furthermore, the Weingarten map $\mcW_p$ of $M$ is positive definite for every $p\in M$. 
Therefore, the restriction $\mcW_{p,Y}$ of $\W_p$ to any subspace $Y\in\Gr(T_pM,m-1)$ 
is positive definite and in particular has full rank. It follows from 
Proposition~\ref{prop:TSigma_m} that $\Phi_m$ is an injective immersion. 
By compactness of the domain $\Gr(M,m-1)$, it follows that $\Phi_m$ is 
an embedding. The image $\Sigma_m$ is thus a smooth submanifold of 
dimension $\dim\Sigma_m=\dim\Gr(M,m-1)=m(n-m)-1$. 
The claimed orthogonal decomposition of $T_W\Sigma_m$ is a consequence of~\eqref{eq:T_W^v(Sigma_m)} and~\eqref{eq:decomp-ol(Skew_n)}.

(2) It is clear from the orthogonal decompositions $T_W\Sigma_m=T_W^v\Sigma_m\oplus T_W^h\Sigma_m$ 
and~\eqref{eq:decomp-ol(Skew_n)} that $\nuS(W)$ lies 
in the orthogonal complement of $T_W\Sigma_m$ in $T_W\Gr_{n,m}$. 
Consider the geodesic   
\[ 
 W_\rho := \exp_W(\rho\cdot\nuS(W)) \stackrel{\eqref{eq:exp-Gr(n,m)}}{=} 
     \Pi( \exp_Q(QN) ) \stackrel{\eqref{eq:exp_Q(QU)}}{=} \Pi( Q\cdot Q_\rho )
\]
through $W$ in direction $\nuS(W)$. 

The first column of $Q\cdot Q_\rho$ is given by 
$p_\rho := p\cos\rho - \nu(p)\sin\rho$.
In particular, $p_\rho \in W_\rho$. 
By assumption, the normal vector $\nu(p)$ points inwards~$K$. 
Therefore, for all $\rho<0$ with $|\rho|$ small enough, we have $p_\rho\in K$,  
which implies $W_\rho \in \DG_m(C)$. 
Hence $-\nuS(W)$ points into $\DG_m(C)$.

On the other hand, the orthogonal complement of $\nu(p)$ is a supporting hyperplane 
of $\cone(K)$. From this it follows that $W_\rho \cap K =\{0\}$ 
for all  sufficiently small $\rho>0$.
Therefore,  $W_\rho \in \Gr_{n,m}\setminus\DG_m(C) \subseteq \PG_m(C)$. 
Hence $\nuS(W)$ points into $\PG_m(C)$.

(3) We may lift the function $\PiM\colon \Sigma_m\to M$ to the function $\PiMh\colon \hat{\Sigma}\to M$, $Q\mapsto p$, 
where $Q=(p,q_1,\ldots,q_{n-2},\nu(p))$. So we have the following commutative diagram
\begin{equation}\label{eq:PiMh-PiM}
\begin{array}[c]{c}
\begin{tikzpicture}[>=stealth]
\matrix (m) [matrix of math nodes, column sep=10mm, row sep=3mm, 
             text height=1.5ex, text depth=0.25ex]
  {    \hat{\Sigma} & \Sigma_m \\[6mm]
       & M \\
  };
\path[->>]
  (m-1-1) edge node[auto]{$\PiS$} (m-1-2)
  (m-1-2) edge node[auto]{$\PiM$} (m-2-2)
  (m-1-1) edge node[below left=-1.5mm]{$\PiMh$} (m-2-2);
\end{tikzpicture}
\end{array} ,\quad 
\begin{array}[c]{c}
\begin{tikzpicture}[>=stealth]
\matrix (m) [matrix of math nodes, column sep=13mm, row sep=3mm, 
             text height=1.5ex, text depth=0.25ex]
  {    Q & W \\[6mm]
       & p \\
  };
\path[|->]
  (m-1-1) edge (m-1-2)
  (m-1-2) edge (m-2-2)
  (m-1-1) edge (m-2-2);
\end{tikzpicture}
\end{array}  ,
\end{equation}
where $W=\lin\{p,q_1,\ldots,q_{m-1}\}$. We next show that the orthogonal complement of $\ker D_W\PiM$ in $T_W\Sigma_m$ 
is given by the horizontal space $T_W^h\Sigma_m$.

As the map $\PiMh$ is just the projection onto the first column, the kernel of $D_Q\PiMh$ is given by
  \[ \ker D_Q\PiMh = \{ U\in T_Q\hat{\Sigma} \mid 
QU e_1=0 \} . \]
It follows that $T_Q^v\hat{\Sigma}\subseteq \ker D_Q\PiMh$ and $T_Q^h\hat{\Sigma}\cap \ker D_Q\PiMh=0$.
Using the commutative diagram~\eqref{eq:PiMh-PiM}, we obtain $T_W^v\Sigma_m\subseteq \ker D_W\PiM$ and $T_W^h \Sigma_m\cap \ker D_W\PiM=0$. 
The fact that $T_W\Sigma_m=T_W^v\Sigma_m\oplus T_W^h\Sigma_m$ is an orthogonal decomposition implies that $T_W^v\Sigma_m=\ker D_W\PiM$ 
and hence $T_W^h\Sigma_m=(\ker D_W\PiM)^\bot$.

In order to compute $D_W\PiM$ on the horizontal space $T_W^h\Sigma_m$, we consider for fixed $Q=(p,q_1,\ldots,q_{n-2},\nu(p))\in\hat{\Sigma}$ 
the following diagram
  \[ \begin{array}[c]{c}
\begin{tikzpicture}[>=stealth]
\matrix (m) [matrix of math nodes, column sep=10mm, row sep=3mm, 
             text height=1.5ex, text depth=0.25ex]
  {    \IR^{n-2} & S_n & T_W^h\Sigma_m \\
       \rotatebox{90}{$=$} \\
       \IR^{n-2} & & T_pM \\
  };
\path[->]
  (m-1-1) edge node[auto]{$E^h$} (m-1-2)
  (m-1-2) edge node[auto]{$\beta$} (m-1-3)
  (m-1-3) edge node[auto]{$D_W\PiM$} (m-3-3)
  (m-3-1) edge node[auto]{$\gamma$} (m-3-3);
\end{tikzpicture}
\end{array} , \]
where $\beta(U)=[Q,U]$ for $U\in S_n$, and $\gamma(a)=\sum_{i=1}^{n-2} a_i\cdot q_i$ for $a=(a_1,\ldots,a_{n-2})^T\in\IR^{n-2}$. 
Let us check that this diagram is commutative: using
\[ D_Q\PiS(Q\cdot \hat{E}^h(a)) \stackrel{\eqref{eq:D_QPi}}{=} [Q,\pi(\hat{E}^h(a))] \stackrel{\eqref{eq:E^h=pi(hat(E^h))}}{=} [Q,E^h(a)] , \]
we get
  \[ D_W\PiM([Q,E^h(a)]) = D_W\PiM(D_Q\PiS(Q\cdot \hat{E}^h(a))) \stackrel{\eqref{eq:PiMh-PiM}}{=} D_Q\PiMh(Q\cdot \hat{E}^h(a)) = \gamma(a) 
  . \]
Since $\beta$ and $\gamma$ are isometric we obtain
  \[ \ndet(D_W\PiM) = |\det(E^h)|^{-1} \stackrel{\eqref{eq:ndet(E^h)}}{=} |\tdet_Y(\W_p)|^{-1} = \tdet_Y(\mcW_p)^{-1}  , \]
where the last equality follows from the positive definiteness of $\mcW_p$.
\end{proof}

\begin{proof}[Proof of Theorem~\ref{thm:norm-Jac}]
It remains to show the claim about the Jacobian of $\Psi$. 

Let us make yet another definition in the lifted setting. 
For $Q\in\hat{\Sigma}$ we define the direction 
$\hat{\nu}(Q)\in T_Q O(n)$ and the map 
$\hat{\Psi}\colon \hat{\Sigma}\times\IR\to O(n)$ via
  \[ \hat{\nu}(Q) := Q \cdot N ,\qquad \hat{\Psi}(Q,t) 
                   = \exp_Q(\arctan t\cdot \hat{\nu}(Q)) , \]
where $N\in\Skew_n$ is, as usual, defined in~\eqref{eq:def-N}.
Abbreviating $\rho:=\arctan t$, we obtain from~\eqref{eq:exp_Q(QU)} that the map $\hat{\Psi}$ is given by 
$\hat{\Psi}(Q,t) = Q\cdot Q_\rho$, where $Q_\rho$ was defined 
in~\eqref{eq:exp_Q(QU)}.
Furthermore, denoting $\tilde{\Pi}:=\PiS\times\id_\IR$, we get the following commutative diagram
\begin{equation}\label{eq:underst-T(Sigma)}
\begin{array}[c]{c}
\begin{tikzpicture}[>=stealth]
\matrix (m) [matrix of math nodes, column sep=14mm, row sep=11mm, 
             text height=1.5ex, text depth=0.25ex]
  {    \hat{\Sigma}\times \IR & O(n) \\
       \Sigma_m\times \IR & \Gr_{n,m} \\
  };
\path[->]
  (m-1-1) edge node[auto]{$\hat{\Psi}$} (m-1-2);
\path[->>]
  (m-2-1) edge node[auto]{$\Psi$} (m-2-2)
  (m-1-1) edge node[auto]{$\tilde{\Pi}$} (m-2-1)
  (m-1-2) edge node[auto]{$\Pi$} (m-2-2);
\end{tikzpicture}
\end{array} .
\end{equation}

We will compute the derivative $D\Psi$ via the lifting $D\hat{\Psi}$ in the following way. 
Let $W\in\Sigma_m$ and let 
$Q=(p,\bar{Q},\nu(p))\in \hat{\Sigma}$ be 
a lifting of~$W$, i.e., $W=\PiS(Q)$. For $(\xi,\dot{t})\in T_{(W,t)}(\Sigma_m\times\IR)$,  
let $\hat{\xi}\in T_Q\hat{\Sigma}$ be a lifting of~$\xi$, i.e., 
$\xi=D\PiS\big(\hat{\xi}\big)$. Using the relations displayed in the 
commutative diagram~\eqref{eq:underst-T(Sigma)}, we get
\begin{equation}\label{eq:DPsi(xi,dot(t))=...}
  D\Psi(\xi,\dot{t})=D\Psi(D\tilde{\Pi}(\hat{\xi},\dot{t})) 
     \stackrel{\eqref{eq:underst-T(Sigma)}}{=} D\Pi(D\hat{\Psi}(\hat{\xi},\dot{t})) 
     .
\end{equation}
From the explicit form $\hat{\Psi}(Q,t) = Q\cdot Q_\rho$, 
taking into account $\rho=\arctan t$, $\tfrac{d\rho}{dt}=(1+t^2)^{-1}$,
and $\frac{d}{d\rho}Q_\rho=Q_\rho N$ (cf.~\eqref{eq:exp_Q(QU)}), 
we get
  \[ D_{(Q,t)}\hat{\Psi}(0,1) = (1+t^2)^{-1}\cdot Q\cdot Q_\rho\cdot N . \]
Using~\eqref{eq:DPsi(xi,dot(t))=...}, we thus have
\begin{align}\label{eq:D_(W,rho)-1}
   D_{(W,t)}\Psi(0,1) & = D\Pi((1+t^2)^{-1}\cdot Q\cdot Q_\rho\cdot N) \nonumber
\\ & \stackrel{\eqref{eq:D_QPi}}{=} (1+t^2)^{-1}\cdot \left[Q\cdot Q_\rho,N\right] .
\end{align}
It remains to compute the derivative of $\Psi$ in the first component.

Taking into account the orthogonal decomposition $T_W\Sigma_m=T_W^v\Sigma_m\oplus T_W^h\Sigma_m$, cf.~Corollary~\ref{cor:Sigma_m}, 
we first consider the vertical space~$T_W^v\Sigma_m$. By~\eqref{eq:T_W^v(Sigma_m)} every element $\xi\in T_W^v\Sigma_m$ is of the form $\xi=[Q,E^v(X)]$ 
for some $X\in\IR^{(n-m-1)\times(m-1)}$. Furthermore, a lifting $\hat{\xi}$ of $\xi$ is given by $\hat{\xi}=Q\cdot E^v(X)$, as $D\PiS([Q,E^v(X)])=Q\cdot E^v(X)$, 
cf.~\eqref{eq:D_QPi}. As for fixed~$\rho$, the map $Q\mapsto Q\cdot Q_\rho$ is linear, we obtain from $\hat{\Psi}(Q,t) = Q\cdot Q_\rho$
  \[ D_{(Q,t)}\hat{\Psi}\big(\hat{\xi},0\big) = 
     Q\cdot E^v(X)\cdot Q_\rho \stackrel{(*)}{=} 
     Q\cdot Q_\rho\cdot E^v(X) , \]
where the equality~$(*)$ follows from the fact that $Q_\rho$ only acts 
on the first and the last columns or rows. This implies via~\eqref{eq:DPsi(xi,dot(t))=...} and~\eqref{eq:D_QPi}
\begin{equation}\label{eq:D_(W,rho)-2}
  D_{(W,t)}\Psi(\xi,0) = \left[Q\cdot Q_\rho \,,\; \pi(E^v(X))\right] = \left[Q\cdot Q_\rho \,,\; E^v(X)\right] ,
\end{equation}
where, as usual, $\pi\colon\Skew_n\to\ol{\Skew_n}$ denotes the orthogonal projection. 

As for the horizontal space, any element $\zeta\in T_W^v\Sigma_m$ is of the form $\zeta=[Q,E^h(a)]$ for some $a\in\IR^{n-2}$, cf.~\eqref{eq:T_W^v(Sigma_m)}. 
A lifting~$\hat{\zeta}$ of~$\zeta$ is given by $\zeta=Q\cdot \hat{E}^h(a)$, as
  \[ D\PiS(\hat{\zeta}) \stackrel{\eqref{eq:D_QPi}}{=} [Q,\pi(\hat{E}^h(a))] \stackrel{\eqref{eq:E^h=pi(hat(E^h))}}{=} [Q,E^h(a)] = \zeta . \]
We thus get from $\hat{\Psi}(Q,t) = Q\cdot Q_\rho$
  \[ D_{(Q,t)}\hat{\Psi}\big(\hat{\zeta},0\big) = Q\cdot \hat{E}^h(a)\cdot Q_\rho 
     = Q\cdot Q_\rho\cdot Q_\rho^T\cdot \hat{E}^h(a)\cdot Q_\rho . \]
This implies via~\eqref{eq:DPsi(xi,dot(t))=...} and~\eqref{eq:D_QPi}
\begin{equation}\label{eq:D_(W,rho)-3}
  D_{(W,t)}\Psi(\zeta,0) = \left[Q\cdot Q_\rho \,,\; 
  \pi\left(Q_\rho^T\cdot \hat{E}^h(a)\cdot Q_\rho\right) \right] .
\end{equation}
To finish the computation, note that we have for $a,b\in\IR^{n-2}$
\begin{equation}\label{eq:D_(W,rho)-4}
  Q_\rho^T\cdot \left(\begin{array}{c|c|c} \scriptstyle0 & -a^T & \scriptstyle0 
\\\hline \rule{0mm}{4mm}a & 0 & -b 
\\\hline \scriptstyle0 & \rule{0mm}{4mm}b^T & \scriptstyle0
\end{array}\right)\cdot Q_\rho = \left(\begin{array}{c|c|c} 
\scriptstyle0 & -ca^T -sb^T & \scriptstyle0 
\\\hline \rule{0mm}{4mm}ca+sb & 0 & sa-cb 
\\\hline \scriptstyle0 & \rule{0mm}{4mm}-sa^T+cb^T & \scriptstyle0
\end{array}\right) ,
\end{equation}
where we use the abbreviations $s:=\sin(\rho)$ and $c:=\cos(\rho)$. 
Using the decompositions 
$a=\left(\begin{smallmatrix} \textstyle a_1\\[1pt]\textstyle a_2\end{smallmatrix}\right)$, 
$b=\left(\begin{smallmatrix} \textstyle b_1\\[1pt]\textstyle b_2\end{smallmatrix}\right)$
with $a_1,b_1\in\IR^{m-1}$, $a_2,b_2\in\IR^{n-m-1}$, we obtain
\begin{align}\label{eq:D_(W,rho)-5}
  & \pi\left( Q_\rho^T \left(\begin{array}{c|c|c} \scriptstyle0 & -a^T & \scriptstyle0 
\\\hline \rule{0mm}{4mm}a & 0 & -b 
\\\hline \scriptstyle0 & \rule{0mm}{4mm}b^T & \scriptstyle0
\end{array}\right) Q_\rho \right) \nonumber
\\ & \hspace{2cm} \stackrel{\eqref{eq:D_(W,rho)-4}}{=} \left(\begin{array}{c|c|c|c}
   \multicolumn{2}{c|}{\multirow{2}{*}{$0$}} & -ca_2^T-sb_2^T & \scriptstyle0 
   \\\cline{3-4} \multicolumn{2}{c|}{} & 0 & \rule{0mm}{4mm} sa_1-cb_1
\\\hline \rule{0mm}{4mm}ca_2+sb_2 & 0 & \multicolumn{2}{c}{\multirow{2}{*}{$0$}}
   \\\cline{1-2} \rule{0mm}{4mm}\scriptstyle0 & \rule{1mm}{0mm}-sa_1^T+cb_1^T\rule{1mm}{0mm}
\end{array}\right) .
\end{align}
Combining~\eqref{eq:D_(W,rho)-5} with the formula in~\eqref{eq:D_(W,rho)-3}, we get an explicit formula for the derivative of~$\Psi$ on the horizontal space.

To summarize the results from~\eqref{eq:D_(W,rho)-1},~\eqref{eq:D_(W,rho)-2}, 
and~\eqref{eq:D_(W,rho)-3}, let us define $W_\rho:=\Pi(Q\cdot Q_\rho)$, and set
  \[ T_{W_\rho}^1 := D\Psi(0\times\IR) ,\qquad 
     T_{W_\rho}^2 := D\Psi(T_W^v\Sigma_m\times0) ,\qquad 
     T_{W_\rho}^3 := D\Psi(T_W^h\Sigma_m\times0) . \]
Recall the orthogonal decomposition $\ol{\Skew_n} = \IR N \oplus \im E^v \oplus \im E^h$, cf.~\eqref{eq:decomp-ol(Skew_n)}. 
This implies the orthogonal decomposition
  \[ T_{W_\rho}\Gr_{n,m}=T_{W_\rho}^1 \oplus T_{W_\rho}^2 \oplus T_{W_\rho}^3 . \]
Furthermore, the corresponding restrictions of $D\Psi$ yield a 
dilation by the factor $(1+t^2)^{-1}$ between $0\times\IR$ and $T_{W_\rho}^1$ (cf.~\eqref{eq:D_(W,rho)-1}), 
an isometry between $T_W^v\Sigma_m\times 0$ and $T_{W_\rho}^2$ (cf.~\eqref{eq:D_(W,rho)-2}), and a 
nontrivial linear map~$\ol{D\Psi}$ between $T_W^h\Sigma_m\times 0$ and $T_{W_\rho}^3$ (cf.~\eqref{eq:D_(W,rho)-3},~\eqref{eq:D_(W,rho)-5}). This implies
\begin{equation}\label{eq:final-step-1}
  \ndet(D_{(W,t)}\Psi) = (1+t^2)^{-1}\cdot |\det\ol{D\Psi}| .
\end{equation}
We determine now $|\det\ol{D\Psi}|$.

We consider the following commutative diagram defining the linear map $\mu\colon\IR^{n-2}\to\IR^{n-2}$:
  \[ \begin{array}[c]{c}
\begin{tikzpicture}[>=stealth]
\matrix (m) [matrix of math nodes, column sep=14mm, row sep=14mm, 
             text height=1.5ex, text depth=0.25ex]
  {    T_W^h\Sigma_m\times0 & T_{W_\rho}^3 \\
       \IR^{n-2} & \IR^{(m-1)+(n-m-1)} \\
  };
\path[->]
  (m-1-1) edge node[auto]{$\ol{D\Psi}$} (m-1-2)
  (m-2-1) edge node[right]{$\beta\circ E^h$} (m-1-1)
  (m-2-2) edge node[right]{$\beta\circ \alpha$} (m-1-2);
\path[->, dashed]
  (m-2-1) edge node[auto]{$\mu$} (m-2-2);
\end{tikzpicture}
\end{array} , \]
where $\beta\colon\Skew_n\to T_W^h\Sigma_m$, $U\mapsto[Q,U]$, and $\alpha\colon\IR^{n-2}\to\Skew_n$ as defined in~\eqref{eq:def-alpha}. 
As $\alpha,\beta$ are isometries, we obtain
\begin{equation}\label{eq:final-step}
  |\det \ol{D\Psi}| = |\det(\beta\circ E^h)^{-1}|\cdot |\det\mu| 
  = |\det E^h|^{-1}\cdot |\det\mu| \stackrel{\eqref{eq:ndet(E^h)}}{=} \tdet_Y(\W_p)^{-1}\cdot |\det\mu| .
\end{equation}

As for the linear map $\mu$, we obtain from~\eqref{eq:D_(W,rho)-5} that $\mu$ has the transformation matrix 
$\begin{pmatrix} c\Lambda_1 - s I_{m-1} & c \Lambda_2 \\ s \Lambda_3 & c I_{n-m-1} + s \Lambda_4 \end{pmatrix}$, as 
$\begin{pmatrix} b_1 \\ b_2 \end{pmatrix} = \begin{pmatrix} \Lambda_1 & \Lambda_2 \\ \Lambda_3 & \Lambda_4 \end{pmatrix}\cdot \begin{pmatrix} a_1 \\ a_2 
\end{pmatrix}$ 
and
\[ \begin{pmatrix} c\Lambda_1 - s I_{m-1} & c \Lambda_2 \\ s \Lambda_3 & c I_{n-m-1} + s \Lambda_4 \end{pmatrix} \cdot \begin{pmatrix} a_1 \\ 
a_2\end{pmatrix} = \begin{pmatrix} -s a_1 + c b_1 \\ c a_2 + s b_2 \end{pmatrix} . 
\]
Using that $\frac{s}{c}=\tan\rho=t$ and $c=\cos\rho=1/\sqrt{1+t^2}$, we obtain
\begin{align*}
   \det\mu & = \det \begin{pmatrix} c\Lambda_1 - s I_{m-1} & c \Lambda_2 \\ s \Lambda_3 & c I_{n-m-1} + s \Lambda_4 \end{pmatrix}
\\ & = c^{n-2}\cdot \det \begin{pmatrix} \Lambda_1 + t I_{m-1} & \Lambda_2 \\ -t \Lambda_3 & I_{n-m-1} - t \Lambda_4 \end{pmatrix}
\\ & = (1+t^2)^{-(n-2)/2}\cdot \ch_Y(\W_p,-t) ,
\end{align*}
where the last equality is a consequence of~\eqref{eq:ch_l(A,t)=...}. From~\eqref{eq:final-step-1} and~\eqref{eq:final-step} we thus get
  \[ \ndet(D_{(W,t)}\Psi) = (1+t^2)^{-1}\cdot \frac{|\det\mu|}{\det_Y\W_p} = (1+t^2)^{-n/2}\cdot \frac{|\ch_Y(\W_p,-t)|}{\det_Y\W_p}  , \]
which finishes the proof of Theorem~\ref{thm:norm-Jac}.
\end{proof}

\appendix

\section{Averaging the twisted characteristic polynomial}\label{sec:ave-tw-charpol}
Recall the setting of Section~\ref{sec:tw-charpol}. 
Let $A\in\IR^{k\times k}$ and 
$\vp\colon\R^k\to\R^k,x\mapsto Ax$. 
Using the notation 
$\ch_\ell(A,t) = \ch_{Y_0}(\vp,t)$, 
where $Y_0 =\R^\ell\times 0$, we have 
\begin{equation}\label{eq:Y-uniform--Q-uniform}
  \underset{Y}{\IE}\left[ \ch_Y(\vp,t)\right] = 
  \underset{Q}{\IE}\left[ \ch_\ell(Q^T\,A\,Q,t)\right] . 
\end{equation}
(The reason is that $QY_0\in\Gr_{k,\ell}$ is uniform random when 
$Q\in O(k)$ uniformly chosen at random.)
This equality allows to prove Theorem~\ref{thm:ave-twist-char-poly} 
with basic matrix calculus. The main idea is to use the multilinearity of the determinant 
and the invariance of the coefficients $\sigma_i(A)$ under similarity 
transformations, to show that the coefficients of 
$\IE[\ch_\ell(Q^T\,A\,Q,t)]$
are linear combinations of 
the~$\sigma_i(A)$. We then compute the coefficients of these  
linear combinations by choosing for~$A$ scalar multiples of the identity matrix.

In the following, we use the notation $[k]:=\{1,\ldots,k\}$, 
and we denote by $\binom{[k]}{i}$ the set of all $i$-element subsets 
of $[k]$. For $A\in\IR^{k\times k}$ and $J\in\binom{[k]}{i}$, we denote 
by $\prm_J(A) := \det(A_J)$ the $J$th \emph{principal minor} of $A$, 
where $A_J$ denotes the submatrix of $A$ 
obtained by selecting the rows and columns of~$A$ whose indices 
lie in~$J$. It is well-known (cf.~for example~\cite[Thm.~1.2.12]{HJ:90})
that $\sigma_i(A)$ 
is the sum of all principal minors of~$A$ of size~$i$, i.e.,
\begin{equation}\label{eq:sigma_i-sum-pm}
  \sigma_i(A) = \sum_{J\in\binom{[k]}{i}} \prm_J(A) \; .
\end{equation}
For $0\leq \ell\leq k$, the $\ell$th \emph{leading principal minor} 
$\lpm_\ell(A) := \prm_{[\ell]}(A)$ is defined as the principal minor for $J=[\ell]$. 
Note that $\lpm_\ell(A) = \tdet_{\IR^\ell\times 0}(\vp)$. 

\begin{lemma}\label{lem:expect-princ-min}
Let $A\in\IR^{k\times k}$, and $Q\in O(k)$ be chosen uniformly at random. 
Then, for all $J\in\binom{[k]}{\ell}$, we have
\begin{equation*}\label{eq:E(Q^TAQ)}
  \underset{Q}{\IE}\left[\prm_J(Q^T A Q)\right] \;=\; 
  \underset{Q}{\IE}\left[\lpm_\ell(Q^T A Q)\right] \;=\;
  \frac{1}{\binom{k}{\ell}} \sigma_\ell(A) \; .
\end{equation*}
\end{lemma}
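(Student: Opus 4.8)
The plan is to combine two elementary observations: the invariance of the coefficient $\sigma_\ell$ under orthogonal similarity, and the fact that any principal minor becomes the leading one after a suitable permutation of coordinates, which is itself realized by an orthogonal matrix.

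First I would show that $\IE_Q[\prm_J(Q^T A Q)]$ is independent of $J\in\binom{[k]}{\ell}$. Given such a $J$, choose a permutation matrix $P_J\in O(k)$ that maps the coordinate set $\{1,\dots,\ell\}$ bijectively onto $J$. A direct bookkeeping check on which rows and columns survive shows that the leading $\ell\times\ell$ block of $P_J^T M P_J$ equals the submatrix $M_J$, hence $\prm_J(M) = \lpm_\ell(P_J^T M P_J)$ for every $M\in\IR^{k\times k}$. Applying this with $M = Q^T A Q$ gives $\prm_J(Q^T A Q) = \lpm_\ell\big((QP_J)^T A\,(QP_J)\big)$. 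Since the uniform (Haar) measure on the compact group $O(k)$ is invariant under right multiplication by the fixed element $P_J$, the random matrix $QP_J$ is again uniform on $O(k)$, and taking expectations yields $\IE_Q[\prm_J(Q^T A Q)] = \IE_Q[\lpm_\ell(Q^T A Q)]$. This is the first asserted equality.

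Next I would pin down the common value using $\sigma_\ell(Q^T A Q) = \sigma_\ell(A)$ for all $Q\in O(k)$ (similarity invariance of the characteristic polynomial) together with the principal-minor expansion $\sigma_\ell(B) = \sum_{J\in\binom{[k]}{\ell}}\prm_J(B)$ from \eqref{eq:sigma_i-sum-pm}. Setting $B = Q^T A Q$ and taking expectations,
\[
  \sigma_\ell(A) \;=\; \IE_Q\big[\sigma_\ell(Q^T A Q)\big] \;=\; \sum_{J\in\binom{[k]}{\ell}} \IE_Q\big[\prm_J(Q^T A Q)\big] \;=\; \binom{k}{\ell}\,\IE_Q\big[\lpm_\ell(Q^T A Q)\big],
\]
where the last step uses the $J$-independence established above. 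Dividing by $\binom{k}{\ell}$ completes the proof.

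There is essentially no hard step in this lemma; the only points requiring a word of care are the identity $\prm_J(M) = \lpm_\ell(P_J^T M P_J)$ and the right-invariance of Haar measure on $O(k)$, both of which are routine. The lemma then feeds into the proof of Theorem~\ref{thm:ave-twist-char-poly}: via \eqref{eq:Y-uniform--Q-uniform} it immediately gives \eqref{eq:E_Y(det_Y)=...}, and it serves as the base case for the coordinate computation of $\IE_Q[\ch_\ell(Q^T A Q,t)]$ that produces \eqref{eq:E_Y[ch_Y]=...}.
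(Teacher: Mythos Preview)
Your proof is correct and essentially identical to the paper's: both arguments use a permutation matrix to identify $\prm_J$ with $\lpm_\ell$ after conjugation, invoke right-invariance of Haar measure on $O(k)$ for the first equality, and then combine the principal-minor expansion~\eqref{eq:sigma_i-sum-pm} with the similarity-invariance of $\sigma_\ell$ for the second.
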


\begin{proof}
For the first equality let $J=\{j_1,\ldots,j_\ell\}$, $j_1<\ldots<j_\ell$, 
and let $\pi$ be any permutation of~$[k]$ such that $\pi(i)=j_i$ for all 
$i=1,\ldots,\ell$. If $M_\pi$ denotes the permutation matrix corresponding
to $\pi$, i.e.,~$M_\pi e_i=e_{\pi(i)}$, 
then $A_J=(M_\pi^T A M_\pi)_{[\ell]}$, and therefore
$\prm_J(A)=\lpm_\ell( M_\pi^T A M_\pi)$. 
This implies
\begin{align*}
   \underset{Q}{\IE}\left[\prm_J(Q^T A  Q)\right] & 
   = \underset{Q}{\IE}\left[\lpm_\ell(M_\pi^T Q^T A Q  M_\pi)\right] 
   = \underset{Q}{\IE}\left[\lpm_\ell(Q^T A  Q)\right] \; ,
\end{align*}
where we have used the fact that right multiplication by the fixed element 
$M_\pi$ leaves the uniform distribution on $O(k)$ invariant. This implies
\begin{align*}
   \underset{Q}{\IE}\left[\lpm_\ell(Q^T A Q)\right] & = 
   \frac{1}{\binom{k}{\ell}} \sum_{J\in\binom{[k]}{\ell}} 
   \underset{Q}{\IE}\left[\prm_J(Q^T A Q)\right] = 
   \frac{1}{\binom{k}{\ell}}\cdot 
   \underset{Q}{\IE}\Big[\sum_{J\in\binom{[k]}{\ell}} 
                         \prm_J(Q^T A Q)\Big]
\\ & \stackrel{\eqref{eq:sigma_i-sum-pm}}{=} \frac{1}{\binom{k}{\ell}}
   \cdot\underset{Q}{\IE}\left[\sigma_\ell(Q^T A Q)\right] 
   = \frac{1}{\binom{k}{\ell}} \sigma_\ell(A)  \; . \qedhere
\end{align*}
\end{proof}

\begin{proof}[Proof of Theorem~\ref{thm:ave-twist-char-poly}]
The statement~\eqref{eq:E_Y(det_Y)=...} follows from~\eqref{eq:Y-uniform--Q-uniform} 
and Lemma~\ref{eq:E(Q^TAQ)}. 

For proving~\eqref{eq:E_Y[ch_Y]=...},  
we start with a general observation.
By multilinearity, we write the 
determinant of a matrix with columns $v_1,\ldots,v_{i-1},v_i+\alpha\cdot e_i,v_{i+1},\ldots,v_n$
in the form
  \[ \det(v_1,\ldots,v_{i-1},v_i+\alpha\cdot e_i,v_{i+1},\ldots,v_n)
      = \det(v_1,\ldots,v_n) + \alpha\cdot \det(v_1,\ldots,v_{i-1},e_i,v_{i+1},\ldots,v_n) . \]
Using this repeatedly, we obtain
\begin{equation}\label{eq:det-exp}
 \det(A+\diag(\alpha_1,\ldots,\alpha_n)) = \sum_{J\subseteq[k]} \prm_J(A)\cdot \prod_{i\in[k]\setminus J} \alpha_i \; . 
\end{equation}
By~\eqref{eq:ch_l(A,t)=...} we can express the twisted characteristic polynomial as 
$$
   \ch_\ell(A,t)  =t^{k-\ell}\cdot \det(A+\diag(-t,\ldots,-t,1/t,\ldots,1/t)) \; .
$$
Applying~\eqref{eq:det-exp},  we expand this to obtain
\begin{equation}\label{eq:expan-ch_l(A)}
   \ch_\ell(A,t) = \sum_{J\subseteq[k]} (-1)^{c_1(J)}\cdot 
                        \prm_J(A)\cdot t^{c_2(J)} \;, 
\end{equation}
where $c_1,c_2\colon 2^{[k]}\to\IN$ are some integer valued functions 
on the power set of~$[k]$. Averaging the twisted characteristic polynomial 
with the help of Lemma~\ref{lem:expect-princ-min} yields
\begin{align*}
   \underset{Q}{\IE}\left[\ch_\ell(Q^T A Q,t)\right] & = 
     \sum_{J\subseteq[k]} (-1)^{c_1(J)}\cdot 
     \underset{Q}{\IE}\left[\prm_J(Q^T A Q)\right]\cdot t^{c_2(J)}
\\ & = 
     \sum_{J\subseteq[k]} \frac{(-1)^{c_1(J)}}{\binom{k}{|J|}}\cdot 
       \sigma_{|J|}(A) t^{c_2(J)} = 
     \sum_{i,j=0}^k \tilde{d}_{ij}\cdot \sigma_{k-j}(A)\cdot t^{k-i} \; ,
\end{align*}
for some rational constants $\tilde{d}_{ij}$. 
It remains to prove that the $\tilde{d}_{ij}$ coincide with the $d_{ij}$ 
defined in Theorem~\ref{thm:ave-twist-char-poly}. 

To compute the $\tilde{d}_{ij}$, we choose $A=s I_k$. Then,
$\ch_\ell(s I_k,t) = (s-t)^\ell\cdot (1+s t)^{k-\ell}$. 
Since $\sigma_{k-j}(s I_k) = \binom{k}{j} s^{k-j}$, and 
$Q^T s I_k  Q = s I_k$, we get
\begin{align*}
   (s-t)^\ell\cdot (1+s t)^{k-\ell} = \ch_\ell(s I_k,t) & 
     = \underset{Q}{\IE}\left[\ch_\ell(Q^T s I_k Q,t)\right]
     = \sum_{i,j=0}^k \tilde{d}_{ij} \binom{k}{j} s^{k-j} t^{k-i} \; .
\end{align*}
Let us expand the first term: 
\begin{align}\label{eq:chng-summation}
   (s-t)^\ell \cdot (1 + s t)^{k-\ell} & = 
   \left(\sum_{\lambda=0}^\ell\binom{\ell}{\lambda} (-1)^{\ell-\lambda}
      s^\lambda t^{\ell-\lambda}\right)\cdot 
     \left(\sum_{\mu=0}^{k-\ell} \binom{k-\ell}{\mu} s^\mu t^\mu\right) \nonumber
\\ & = \sum_{\lambda=0}^\ell \sum_{\mu=0}^{k-\ell} (-1)^{\ell-\lambda} 
     \binom{\ell}{\lambda} \binom{k-\ell}{\mu} s^{\lambda+\mu} t^{\ell-\lambda+\mu}
\\ & \hspace{-6mm}\stackrel{\substack{i=k-\ell+\lambda-\mu \\ j=k-\lambda-\mu}}{=} 
     \hspace{-2mm}\sum_{\substack{i,j=0\\i+j+\ell\equiv 0\\\pmod 2}}^k 
     (-1)^{\frac{i-j}{2}-\frac{\ell}{2}}\binom{\ell}{\frac{i-j}{2}+\frac{\ell}{2}} 
     \binom{k-\ell}{k-\frac{i+j+\ell}{2}} s^{k-j} t^{k-i} , \nonumber
\end{align}
where again we interpret $\binom{n}{m}=0$ if $m<0$ or $m>n$, i.e., the above 
summation over $i,j$ in fact only runs over the rectangle determined by 
the inequalities $0\leq \frac{i-j}{2}+\frac{\ell}{2}\leq \ell$ and 
$0\leq k-\frac{i+j+\ell}{2}\leq k-\ell$. 
(See Figure~\ref{fig:chng-summation} 
for an illustration of the change of summation.) 
Note that the reverse substitution is given by $\lambda=\frac{i-j+\ell}{2}$ and $\mu=k-\frac{i+j+\ell}{2}$.

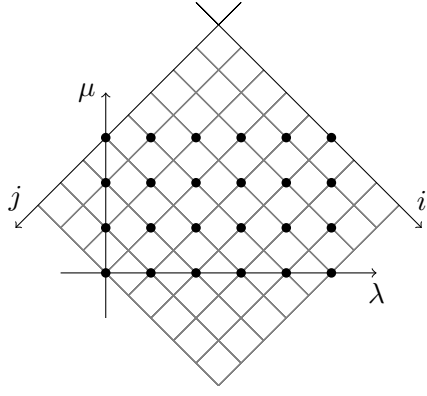
\begin{figure}
  \begin{center}
  \begin{tikzpicture}[scale=0.6]
  \def\ktmp{8}
  \def\ltmp{5}
  \pgfmathsetmacro\kmltmp{\ktmp-\ltmp}
  \pgfmathsetmacro\kpltmp{\ktmp+\ltmp}

  \foreach \i in {1,...,\ktmp}
    \foreach \j in {1,...,\ktmp}
    { \draw[gray] (0.5*\ltmp-0.5*\i,\ktmp-0.5*\ltmp-0.5*\i) -- +(0.5*\ktmp,-0.5*\ktmp);
      \draw[gray] (0.5*\ltmp+0.5*\i,\ktmp-0.5*\ltmp-0.5*\i) -- +(-0.5*\ktmp,-0.5*\ktmp); }

  \draw[->] (-1,0) -- (\ltmp+1,0) node[below]{$\lambda$};
  \draw[->] (0,-1) -- (0,\kmltmp+1) node[left]{$\mu$};
  \foreach \x in {0,1,...,\ltmp}
    \foreach \y in {0,1,...,\kmltmp}
      \fill (\x,\y) circle (3pt);

  \draw[->] (0.5*\ltmp,\ktmp-0.5*\ltmp)
             -- +(0.5,0.5) -- (-0.5*\kmltmp,0.5*\kmltmp)
             -- ++(-0.5,-0.5) node[above=1mm]{$j$};
  \draw[->] (0.5*\ltmp,\ktmp-0.5*\ltmp) -- +(-0.5,0.5) -- (0.5*\kpltmp,0.5*\kmltmp)
             -- ++(0.5,-0.5) node[above=1mm]{$i$};
  \end{tikzpicture}
  \end{center}
  \caption{Illustration of the change of summation in~\eqref{eq:chng-summation} ($k=8$, $\ell=5$).}
  \label{fig:chng-summation}
\end{figure}

Comparing the coefficients of the above two expressions for 
$(s-t)^\ell\cdot (1+s t)^{k-\ell}$ reveals that indeed 
$\tilde{d}_{ij}=d_{ij}$ as defined in Theorem~\ref{thm:ave-twist-char-poly}. 
This completes the proof of~\eqref{eq:E_Y[ch_Y]=...} 

For the last claim~\eqref{star} in Theorem~\ref{thm:ave-twist-char-poly}, 
we define the {\em positive characteristic polynomial} 
\begin{equation*}
     \ch_\ell^+(A,t) = \det \begin{pmatrix} A_1+tI_\ell & A_2 
                          \\ tA_3 & tA_4+I_{k-\ell} \end{pmatrix} 
\end{equation*}
by replacing $-t$ by $t$ in \eqref{eq:ch_l(A,t)=...}.   
By the same reasoning as for~\eqref{eq:expan-ch_l(A)}, we get 
\begin{equation}\label{eq:ch+}
  \ch^+_\ell(A,t) = \sum_{J\subseteq[k]} \prm_J(A)\cdot t^{c_2(J)}  \; ,
\end{equation}
with the same function $c_2\colon 2^{[k]}\to\IN$ as in~\eqref{eq:expan-ch_l(A)}.
Moreover, by arguing as in the proof
of~\eqref{eq:E_Y[ch_Y]=...} before, we show that 
\begin{equation}\label{eq:E(ch_l^+(Q^TAQ))}
 \underset{Q}{\IE}\left[\ch^+_\ell(Q^TAQ,t)\right]  = 
     \sum_{i,j=0}^k |d_{ij}|\cdot \sigma_{k-j}(A)\cdot t^{k-i} \; . 
\end{equation} 
Now assume that $A$ is positive semidefinite. 
Then each of its principal minor is nonnegative, 
i.e., $\prm_J(A)\geq0$ for all $J\subseteq[k]$. 
Therefore, if $t\geq0$, 
we get from~\eqref{eq:expan-ch_l(A)} and \eqref{eq:ch+}
\begin{align*}
   \Big|\ch_\ell(A,t)\Big| & = 
   \Big| \sum_{J\subseteq[k]} (-1)^{c_1(J)}\cdot \prm_J(A)\cdot t^{c_2(J)} \Big|
   \leq \sum_{J\subseteq[k]} \prm_J(A)\cdot t^{c_2(J)} 
    =  \ch^+_\ell(A,t) \; .
\end{align*}
Taking into account~\eqref{eq:E(ch_l^+(Q^TAQ))}
completes the proof of Theorem~\ref{thm:ave-twist-char-poly}.
\end{proof}

\section{Proof of some technical estimations}\label{sec:proof_technicalities}

Recall that the symbol~$\calc$ is used to mark simple estimates, which are easily checked with a computer algebra system.

\begin{proof}[Proof of Lemma~\ref{lem:estimates}]
(1) We make a case distinction by the parity of~$\ell$.
Using $\Gamma(x+1)=x\cdot \Gamma(x)$, we get for odd~$\ell$
\begin{align*}
   \frac{\Gamma(\frac{m+\ell+1}{2})}{\Gamma(\frac{m}{2})} & 
   = \prod_{a=0}^{\frac{\ell-1}{2}} \left(\frac{m}{2}+a\right) 
   \leq \frac{m}{2}\cdot \left(\frac{m+\ell-1}{2}\right)^{\frac{\ell-1}{2}} 
   \leq \sqrt{\frac{m}{2}}\cdot \left(\frac{m+\ell}{2}\right)^{\frac{\ell}{2}} \; .
\end{align*}
Using additionally $\Gamma(x+\frac{1}{2})<\sqrt{x}\cdot \Gamma(x)$, 
we get for even~$\ell$
\begin{align*}
   \frac{\Gamma(\frac{m+\ell+1}{2})}{\Gamma(\frac{m}{2})} & 
   = \prod_{a=0}^{\frac{\ell}{2}-1} \left(\frac{m+1}{2}+a\right) 
     \cdot \frac{\Gamma(\frac{m+1}{2})}{\Gamma(\frac{m}{2})} 
   < \left(\frac{m+\ell}{2}\right)^{\frac{\ell}{2}}\cdot \sqrt{\frac{m}{2}} \; .
\end{align*}

(2) As for the second estimate, we distinguish the cases $i\geq 2k$ 
and $i\leq 2k$. From $0\leq k\leq m-1$ and $0\leq i-k\leq n-m-1$ we get
\[\begin{array}{r@{\;\;\leq\;\;}c@{\;\;\leq\;\;}l}
   1 & m-k+i-k & n-1 \; ,
\\[1mm] 1 & n-(m-k+i-k) & n-1 \; .
\end{array}\]
For $i\geq 2k$ we thus get
\begin{align*}
   \left(\frac{m+i-2k}{n-m-i+2k}\right)^{\frac{i-2k}{2}} & 
   \leq (n-1)^{\frac{i-2k}{2}} < n^{\frac{i}{2}} ,
\intertext{and for $i\leq 2k$}
   \left(\frac{m+i-2k}{n-m-i+2k}\right)^{\frac{i-2k}{2}} & 
   = \left(\frac{n-m+2k-i}{m-2k+i}\right)^{\frac{2k-i}{2}} 
   \leq (n-1)^{\frac{2k-i}{2}} < n^{\frac{i}{2}} \; .
\end{align*}

(3) The $I$-functions have been estimated in~\cite[Lemma~2.2]{BCL:08} 
in the following way. Let $\veps:=\sin(\alpha)=\frac{1}{t}$. For $i<n-2$
\begin{align*}
   I_{n,n-2-i}(\alpha) & = \int_0^\alpha \cos(\rho)^{n-2-i}\cdot \sin(\rho)^i\,d\rho
 \leq \frac{\veps^{i+1}}{i+1} \; ,
\end{align*}
and for $i=n-2$, assuming $n\geq3$,
\begin{align*}
   I_{n,0}(\alpha) & = \int_0^\alpha \sin(\rho)^{n-2}\,d\rho
    \leq \frac{\mcO_{n-1}\cdot \veps^{n-1}}{2\mcO_{n-2}}
    = \frac{\sqrt{\pi}}{2}\cdot \frac{\Gamma(\frac{n-1}{2})}{\Gamma(\frac{n}{2})}
      \cdot \veps^{n-1}
    \stackrel{\calc}{<} \sqrt{\frac{\pi}{2(n-2)}}\cdot \veps^{n-1} \; .
\end{align*}
With these estimates we get
\begin{align*}
   \sum_{i=0}^{n-2} \binom{n-2}{i} & \cdot n^{\frac{i}{2}} \cdot I_{n,n-2-i}(\alpha)
\\ & \leq \sum_{i=0}^{n-2} \binom{n-2}{i}\cdot n^{\frac{i}{2}} 
   \cdot \frac{\veps^{i+1}}{i+1} + \left(\sqrt{\frac{\pi}{2(n-2)}}-\frac{1}{n-1}\right)
   \cdot \veps^{n-1}\cdot n^{\frac{n-2}{2}}
\\ & \stackrel{\calc}{<}\; \veps\cdot \Bigg( \sum_{i=0}^{n-2} \binom{n-2}{i}
   \cdot n^{\frac{i}{2}} \cdot \veps^i + \frac{1.4}{\sqrt{n}}\cdot \veps^{n-2}
   \cdot n^{\frac{n-2}{2}} \Bigg)
\\ & = \veps\cdot \Bigg( \left(1 + \sqrt{n}\cdot \veps\right)^{n-2} + 1.4
   \cdot \veps^{n-2}\cdot n^{\frac{n-3}{2}} \Bigg) \; .
\end{align*}
For $\veps<n^{-\frac{3}{2}}$ we thus get
\begin{align*}
   \sum_{i=0}^{n-2} \binom{n-2}{i}\cdot n^{\frac{i}{2}} \cdot I_{n,n-2-i}(\alpha) & 
   < \veps\cdot \Bigg( \underbrace{\left(1 + \frac{1}{n}\right)^{n-2}}_{<\exp(1)} 
     + 1.4\cdot n^{\frac{3}{2}-n} \Bigg)
   \;\stackrel{\calc}{<}\; 3\cdot \veps \; .
\end{align*}
Similarly we derive
  \[ \sum_{i=0}^{n-2} \binom{n-2}{i} \cdot I_{n,n-2-i}(\alpha) 
     < \veps\cdot \left((1+\veps)^{n-2} 
       + \frac{1.4}{\sqrt{n}}\cdot \veps^{n-2}\right) \; . \]
For $\veps<\frac{1}{m}$ we thus get
\begin{align*}
   \sum_{i=0}^{n-2} \binom{n-2}{i} \cdot I_{n,n-2-i}(\alpha) & 
     < \veps\cdot \left(\left(1+\frac{1}{m}\right)^{n-2} 
       + \frac{1.4}{\sqrt{n}\cdot m^{n-2}}\right)
     \;\stackrel{\calc}{<}\; \veps\cdot \exp\left(\frac{n}{m}\right) \; . \qedhere
\end{align*}
\end{proof}

{\small \bibliography{grassmann}}

\end{document}